\title{Donaldson invariants of symplectic manifolds}
\author[Steven Sivek]{Steven Sivek}
\address{Department of Mathematics \\ Harvard University}
\email{ssivek@math.harvard.edu}
\newcommand\cald{\mathcal{D}}
\newcommand\cpbar{\overline{\mathbb{CP}^2}}
\newcommand\zz{\mathbb{Z}}
\newcommand\rr{\mathbb{R}}
\newcommand\qq{\mathbb{Q}}
\newcommand\cc{\mathbb{C}}
\newcommand\xc{{X^\circ}}
\newcommand\ming{8}
\let\@@pmod\pmod
\DeclareRobustCommand{\pmod}{\@ifstar\@pmods\@@pmod}
\def\@pmods#1{\mkern4mu({\operator@font mod}\mkern 6mu#1)}
\newtheorem{theorem}{Theorem}[section]
\newtheorem{lemma}[theorem]{Lemma}
\newtheorem{conjecture}[theorem]{Conjecture}
\newtheorem{corollary}[theorem]{Corollary}
\newtheorem{proposition}[theorem]{Proposition}
\theoremstyle{definition}
\newtheorem{definition}[theorem]{Definition}
\newtheorem{remark}[theorem]{Remark}
\newtheorem*{rep@thm}{\rep@title}
\newcommand{\newreptheorem}[2]{%
\newenvironment{rep#1}[1][0,0]{%
\def\rep@title{#2##1}%
\begin{rep@thm}}%
{\end{rep@thm}}}
\begin{document}
\begin{abstract}
We prove that symplectic 4-manifolds with $b_1=0$ and $b^+ > 1$ have nonvanishing Donaldson invariants, and that the canonical class is always a basic class.  We also characterize in many situations the basic classes of a Lefschetz fibration over the sphere which evaluate maximally on a generic fiber.
\end{abstract}
\maketitle

%
%

\section{Introduction}

The purpose of this paper is to prove a nonvanishing result for Donaldson invariants of symplectic $4$-manifolds.  Donaldson \cite{Donaldson-polynomial} proved that his polynomial invariants are nonzero for large powers of a hyperplane class on a simply connected complex projective surface, and so it is natural to ask if this result generalizes to symplectic manifolds.  For $X$ with symplectic form $\omega$ representing an integral homology class, Donaldson \cite{Donaldson-pencils} showed that some large multiple of $[\omega]$ is Poincar{\'e} dual to the fibers of a Lefschetz pencil on $X$, and so $[\omega]$ is analogous to a hyperplane class on a projective surface.  We therefore prove the following theorem:

\begin{theorem}
\label{thm:nonvanishing}
Let $X$ be a closed $4$-manifold of Donaldson simple type with $b_1(X)=0$ and $b^+(X) > 1$, and let $\omega$ be an integral symplectic form on $X$ with Poincar{\'e} dual $h\in H_2(X;\zz)$.  Then the Donaldson series $\cald^w_X(h)$ is nonzero for any $w\in H^2(X;\zz)$.  In fact, the canonical class $K_X$ is a basic class of $X$, and all basic classes $K$ satisfy
\[ |K \cdot [\omega]| \leq K_X \cdot [\omega] \]
with equality if and only if $K=\pm K_X$.
\end{theorem}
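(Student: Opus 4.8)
The plan is to reduce the theorem to a statement about Lefschetz fibrations over $S^2$ and then to analyze those by gauge theory on the complement of a regular fiber.

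\emph{Reduction to a Lefschetz fibration.} By Donaldson's theorem on Lefschetz pencils \cite{Donaldson-pencils}, for all sufficiently large $d$ the class $dh$ is Poincar\'e dual to the smooth fibers of a Lefschetz pencil on $X$; blowing up its $N = d^2(h\cdot h)$ base points yields $Z = X\# N\cpbar$ together with a relatively minimal genus-$g$ Lefschetz fibration $f\colon Z\to S^2$ whose regular fiber $F$ represents $d\,\pi^*h - \sum_{i=1}^N E_i$, where $\pi\colon Z\to X$ is the blow-down and the exceptional spheres $E_i$ are sections of $f$. For $d$ large we may assume $g\ge 2$; adjunction for $F$ gives $K_Z\cdot[F] = 2g-2$, and since $K_Z = \pi^*K_X + \sum_i E_i$ this reads $2g-2 = d(K_X\cdot h) + N$. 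Now $Z$ is of simple type with $b_1(Z) = 0$ and $b^+(Z) = b^+(X) > 1$, and by the Fintushel--Stern blow-up formula the basic classes of $Z$ are precisely the classes $\pi^*\kappa + \sum_i\varepsilon_i E_i$ with $\kappa$ basic for $X$ and $\varepsilon_i\in\{\pm 1\}$. Granting the analogous statement for $(Z,[F])$ — that $K_Z$ is basic and that $|K\cdot[F]|\le 2g-2 = K_Z\cdot[F]$ for every basic class $K$ of $Z$, with equality only for $K = \pm K_Z$ — the theorem follows: $K_X$ is basic; for any basic $\kappa$ the classes $\pi^*\kappa \pm (E_1+\dots+E_N)$ are basic and satisfy $|d(\kappa\cdot h) \pm N|\le d(K_X\cdot h)+N$, forcing $|\kappa\cdot h|\le K_X\cdot h$; and if $|\kappa\cdot h| = K_X\cdot h$ then the corresponding extremal lift equals $\pm K_Z$, which (using $N\ge 1$) forces $\kappa = \pm K_X$. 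Finally, once $K_X$ is basic the series $\cald^w_X$ is non-zero for every $w$, because the exponentials $e^{K_r}$ attached to distinct basic classes $K_r$ in the structure-theorem expression for $\cald^w_X$ are linearly independent functions on $H_2(X;\rr)$.

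\emph{The Lefschetz fibration via Floer homology.} It remains to treat a relatively minimal genus-$g$ Lefschetz fibration $f\colon Z\to S^2$ with $g\ge 2$, $b_1(Z)=0$, $b^+(Z)>1$ and $Z$ of simple type. Remove a fibered neighborhood $\nu(F)\cong F\times D^2$ of a regular fiber; since the product of the vanishing-cycle Dehn twists is trivial in the mapping class group of $F$, the complement $Z_0 := Z\setminus\operatorname{int}\nu(F)$ is a cobordism from $\emptyset$ to $Y := F\times S^1$ and $Z = Z_0\cup_Y (F\times D^2)$. Thus $\cald_Z$ is the pairing of the relative Donaldson invariant $\psi(Z_0)$ against $\psi(F\times D^2)$ in a completion of the instanton Floer homology of $Y$ taken with the admissible $SO(3)$-bundle Poincar\'e dual to $[\mathrm{pt}\times S^1]$, the Fukaya--Floer refinement being used to recover the full series. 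The key input is the computation, due to Mu\~noz and others, of this Floer homology and of the action of the $\mu$-classes: under $\mu(F)$ it splits into generalized eigenspaces all of whose eigenvalues have absolute value at most $2g-2$, the two extremal eigenspaces (eigenvalue $\pm(2g-2)$) being one-dimensional. Since a basic class $K$ of $Z$ enters $\cald_Z$ only through the $\mu(F)=K\cdot[F]$ eigenspace, this already gives $|K\cdot[F]|\le 2g-2 = K_Z\cdot[F]$. What remains is (a) to show $\psi(Z_0)$ has non-zero component in the top $\mu(F)$-eigenspace, so that $2g-2$ is attained and $K_Z$ is basic, and (b) to show that, when it is, the only basic classes realizing $\pm(2g-2)$ are $\pm K_Z$ — for which one analyzes the extremal part of the pairing more closely, using relative minimality (no vanishing cycle null-homotopic) to exclude the other sign patterns the blow-up formula would otherwise allow.

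\emph{The main obstacle.} The crux, and the step I expect to be hardest, is (a). Here $Z_0$ is built from $F\times D^2$ by successively attaching $2$-handles along the vanishing cycles $c_1,\dots,c_\mu$ with fiber framing decreased by one, so $\psi(Z_0)$ is the image of the unit $\psi(F\times D^2) = 1$ under a composite of handle-cobordism maps, each of which fits into Floer's surgery exact triangle. The unit is non-zero in every $\mu(F)$-eigenspace, so it is enough to show that the handle map for an \emph{essential} curve is injective on the one-dimensional top eigenspace — plausible because the flat connection responsible for the top eigenvalue should restrict rigidly to the attaching region — and relative minimality guarantees that every vanishing cycle is essential. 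An alternative route is a Taubes-type non-vanishing: by Gompf, $Z$ is symplectic with canonical class $K_Z$, and a relative or gluing version of Taubes' theorem should force the relevant moduli count to be non-zero. Either way the precise scope of the argument — the ``many situations'' of the abstract — is governed by how robustly these handle maps, equivalently the configurations of singular fibers, can be controlled; for the pencils supplied by Donaldson's theorem the singular fibers may be taken irreducible, so all vanishing cycles are non-separating, which is exactly the case needed for the reduction above.
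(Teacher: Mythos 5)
Your reduction to a Lefschetz fibration is essentially the one the paper uses: blow up the base points of a Donaldson pencil, deduce the statement for $X$ from the analogous statement for the resulting fibration, and read off $K_X$ from $K_{\tilde X}$ via the blow-up formula. But the core of your argument --- the nonvanishing for the fibration itself --- is not a proof. You correctly identify step (a), the nonvanishing of the component of the relative invariant $\psi(Z_0)$ in the top $\mu(F)$-eigenspace, as ``the main obstacle,'' and then you do not overcome it: the assertion that each handle-attachment map for an essential vanishing cycle is injective on the one-dimensional top eigenspace is exactly the hard content of Kronheimer and Mrowka's excision argument (the route via \cite{KM-excision, KM-slides} that this paper deliberately does not take), and ``the flat connection responsible for the top eigenvalue should restrict rigidly'' is a heuristic, not an argument. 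The paper's actual strategy is entirely different: it forms a fiber sum $W = X\#_\Sigma Z$ with $Z$ engineered (Theorem \ref{thm:symplectic-embedding}) so that a blow-up of $W$ has the characteristic numbers of a hypersurface in $\mathbb{CP}^3$ and hence satisfies the known cases of Witten's conjecture (Theorem \ref{thm:witten-conjecture}); Taubes' theorem then makes the $e^{2g-2}$-coefficient of $\cald_W$ equal to $\pm c(W)\neq 0$, and Mu\~noz's gluing formula (Theorem \ref{thm:munoz-gluing}) factors that coefficient as a product of the $e^{2g-2}$-coefficients of $\cald_X$ and $\cald_Z$, forcing both to be nonzero. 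Without either that construction or a genuine proof of your step (a), the theorem is not established.

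Two further points. First, you assert that the singular fibers of a Donaldson pencil ``may be taken irreducible''; Donaldson's theorem gives no such guarantee, and the paper must (and does) handle reducible fibers: it observes that every component of a reducible fiber is symplectic, hence meets $[\omega]$ positively and so contains base points, which after blowing up become $(-1)$-sections meeting that component --- this is precisely the hypothesis of Proposition \ref{prop:lf-nonminimal-uniqueness}, which combined with Proposition \ref{prop:lf-basic-class-subspace} supplies the uniqueness of the extremal basic class that your step (b) only gestures at. Second, your closing deduction that $\cald^w_X$ is nonzero ``because the exponentials $e^{K_r}$ are linearly independent functions'' shows the series is a nonzero function of a variable homology class, not that it is nonzero at the particular $h = \mathrm{PD}[\omega]$ demanded by the statement; distinct basic classes can pair identically with $h$ and their contributions could cancel. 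The correct argument, which your inequality with its equality case does make available when $K_X\cdot[\omega]>0$, is that the top-order term $e^{K_X\cdot h}$ appears with a single nonzero coefficient; the degenerate case $K_X\cdot[\omega]=0$ needs the separate observation (via Taubes) that then $K_X=0$ and $\pm K_X$ is the only basic class.
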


The above inequality was observed by Donaldson \cite{Donaldson-submanifolds}, but here we prove that it is sharp.  The analogous inequality and conditions for equality were proved for Seiberg--Witten basic classes by Taubes \cite{Taubes-more}.  Note that by Witten's conjecture \cite{Witten} and work of Taubes \cite{Taubes-symplectic, Taubes-SWtoGR} we expect that symplectic manifolds with $b_1=0$ and $b^+ > 1$ automatically have Donaldson simple type and their Donaldson and Seiberg--Witten basic classes coincide; in Section \ref{sec:lf-nonsimple} we will prove analogous results in case $X$ does not have simple type.

Theorem \ref{thm:nonvanishing} is a straightforward consequence of the following theorem concerning Donaldson invariants of Lefschetz fibrations over the $2$-sphere.  It was announced in \cite[Section 7.9]{KM-excision} by Kronheimer and Mrowka, whose proof was explained in slides from Kronheimer's talk at the 2009 Georgia International Topology Conference \cite{KM-slides}.  Here we give a new proof using an entirely different strategy, following the ideas of \cite{KM-witten} instead.

\begin{theorem}
\label{thm:lf-nonzero}
Let $X$ be a closed $4$-manifold of Donaldson simple type with $b_1(X)=0$ and $b^+(X) > 1$, and suppose that $X$ admits a relatively minimal Lefschetz fibration over $S^2$ with generic fiber $\Sigma$ of genus $g \geq 2$.  Let $w \in H^2(X;\zz)$ be any class whose pairing with $h=[\Sigma]$ is odd.  Then the Donaldson series $\cald^w_X(h)$ is nonzero and has leading term of order $e^{2g-2}$.
\end{theorem}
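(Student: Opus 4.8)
The plan is to read off the leading behaviour of $\cald^w_X(h)$ from a Floer‑theoretic gluing along a fibre. Write the structure theorem as $\cald^w_X = e^{Q/2}\sum_K \varepsilon_K a_K e^K$ with $a_K>0$. Since $h=[\Sigma]$ has $h^2=0$ and $\Sigma$ has genus $g$, the adjunction inequality for Donaldson basic classes (cf.\ \cite{Donaldson-submanifolds}) gives $|K\cdot h|\le 2g-2$ for every basic class, so $\cald^w_X(th)=\sum_K\varepsilon_K a_K e^{t(K\cdot h)}$ is a Laurent polynomial in $e^t$ of degree at most $2g-2$. Hence the theorem is equivalent to the statement that the coefficient $\sum_{K\cdot h=2g-2}\varepsilon_K a_K$ of $e^{t(2g-2)}$ is nonzero.

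First I would decompose $X=N\cup_Y Z$, where $Y=\Sigma\times S^1$ is the preimage of a small circle in $S^2$ avoiding all critical values, $N\cong\Sigma\times D^2$ is a neighbourhood of a regular fibre, and $Z$ is the Lefschetz fibration over a disc containing every critical point (so $\partial Z=Y$, the total monodromy being trivial). Because $w\cdot\Sigma$ is odd the $SO(3)$‑bundle restricts admissibly to $Y$, so instanton Floer homology $I_*(Y)$ and the relative Donaldson invariants of $N$ and $Z$ are defined, and the gluing theorem gives $\cald^w_X(h)=\langle\Psi_N(e^h),\Psi_Z\rangle_{I_*(Y)}$, where the generating series of the classes of parallel copies of $\Sigma\subset N$ produces $e^h$. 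As $\Sigma=\Sigma\times\{pt\}$ can be pushed onto $\partial N$, inserting these surface classes is the same as applying the Floer operator $\mu(\Sigma)$, so $\Psi_N(e^h)=e^{\mu(\Sigma)}\Psi_N(1)$. Decomposing $I_*(Y)$ into generalized eigenspaces of $\mu(\Sigma)$ — whose eigenvalues are $\le 2g-2$ by adjunction, and on which the Floer pairing is nondegenerate and block‑diagonal — the coefficient of $e^{t(2g-2)}$ in $\cald^w_X(th)$ becomes the pairing $\langle\Psi_N(e^h)_{2g-2},\Psi_Z{}_{2g-2}\rangle$ of the two extremal eigenspace components. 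Here I would use that, for $g\ge 2$, Mu\~{n}oz's computation of $I_*(\Sigma\times S^1)$ in the admissible case shows this top eigenspace is one–dimensional; this is exactly where $g\ge 2$ is needed (for $g=1$ the extremal part of $I_*(T^3)$ has higher rank and the leading coefficient can genuinely vanish, as it does for $E(3)$).

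Next I would show the $N$–component is nonzero, i.e.\ $\Psi_N(e^h)_{2g-2}\ne 0$. For this I bootstrap from a complex surface $X_0$ with $b_1(X_0)=0$ and $b^+(X_0)>1$ admitting a genus‑$g$ holomorphic fibration over $\mathbb{P}^1$: by Donaldson's nonvanishing theorem for algebraic surfaces \cite{Donaldson-polynomial} (and its refinements) $\cald_{X_0}$ is nonzero and $K_{X_0}$ is a basic class with $K_{X_0}\cdot[\text{fibre}]=2g-2$, so applying the previous step to $X_0$, whose fibre neighbourhood is again $\Sigma\times D^2$, forces $\Psi_N(e^h)_{2g-2}\ne 0$ (and confirms that the value $2g-2$ is attained).

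Finally — and this is the hard part — I would prove $\Psi_Z{}_{2g-2}\ne 0$. Write $Z=(\Sigma\times D^2)\cup h_1\cup\dots\cup h_k$ where $h_i$ is a Lefschetz $2$‑handle along the $i$‑th vanishing cycle $\gamma_i$ (essential, by relative minimality), so that $\Psi_Z=F_k\circ\dots\circ F_1\big(\Psi_{\Sigma\times D^2}(1)\big)$ with $F_i\colon I_*(M_{i-1})\to I_*(M_i)$ the handle cobordism map, $M_0=M_k=Y$ and $M_i$ the mapping torus of $\tau_{\gamma_1}\cdots\tau_{\gamma_i}$. A generic fibre persists across each handle cobordism, so $F_i$ intertwines the operators $\mu(\Sigma)$ and therefore restricts to a map between the one‑dimensional top eigenspaces, i.e.\ multiplication by a scalar $c_i$; thus $\Psi_Z{}_{2g-2}=(c_1\cdots c_k)\,\Psi_{\Sigma\times D^2}(1)_{2g-2}$, and by the previous step it is enough to show each $c_i\ne 0$. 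This is where I would follow the ideas of \cite{KM-witten}: the $i$‑th cobordism is a symplectic Weinstein–type handle attachment between two surface bundles over $S^1$, and I would argue, using the positivity of such attachments in instanton theory together with the $\mu(\Sigma)$‑equivariant action of the mapping class group of $\Sigma$ on these extremal lines and the finiteness of $H_1(\mathrm{MCG}(\Sigma_g))$, that it acts on the top eigenspace by a nonzero root of unity. Granting this, $c_1\cdots c_k\ne0$, so $\Psi_Z{}_{2g-2}\ne 0$; combined with the nondegeneracy of the Floer pairing on the one‑dimensional top eigenspace and the nonvanishing of $\Psi_N(e^h)_{2g-2}$, the coefficient of $e^{t(2g-2)}$ is nonzero, which is the assertion of the theorem. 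The only genuinely new input is the control of the Lefschetz–handle cobordism maps on the extremal eigenspace; everything else is assembled from the gluing formula, the structure theory of Donaldson invariants, Mu\~{n}oz's calculation, and the known case of algebraic surfaces.
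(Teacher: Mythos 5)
Your route is not the paper's: you are reconstructing the Kronheimer--Mrowka excision/Floer-theoretic proof (cutting along $\Sigma\times S^1$, relative invariants, the extremal eigenspace of $\mu(\Sigma)$), whereas the paper deliberately avoids this and instead forms a closed fiber sum $W=X\#_\Sigma Z$ engineered to satisfy the known cases of Witten's conjecture, computes the $e^{2g-2}$--coefficient of $\cald^w_W$ from Seiberg--Witten theory (Taubes plus Theorem \ref{thm:sw-fibration}), and then factors that coefficient as a product of the leading coefficients of $\cald^w_X$ and $\cald^w_Z$ using Mu\~noz's fiber-sum gluing theorem. That comparison aside, your argument has a genuine gap at exactly the step you flag as ``the hard part'': the claim that each Lefschetz-handle cobordism map acts on the one-dimensional extremal eigenspace by a \emph{nonzero} scalar $c_i$. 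The sketch offered (``positivity'' of the attachment plus finiteness of $H_1(\mathrm{MCG}(\Sigma_g))$) does not rule out $c_i=0$. The composition of the cobordisms for a positive and a negative Dehn twist along the same curve is not a product cobordism (it is a fiber sum with a two-critical-point fibration), so you cannot invert $c_i$ that way; and relations in the mapping class group close up into Lefschetz fibrations over surfaces, whose invariants are precisely what you are trying to compute, so the $H_1(\mathrm{MCG})$ argument is circular. Pinning down this nonvanishing is the entire content of the excision proof, and nothing in your outline supplies it. Note also that your intermediate manifolds $M_i$ are nontrivial mapping tori, so the one-dimensionality of $V_{2g-2}$ must be known for those as well, which again rests on excision rather than on Mu\~noz's computation for $\Sigma\times S^1$ alone.

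A secondary gap: in your bootstrapping step you cite Donaldson's nonvanishing theorem to conclude that $K_{X_0}$ is a basic class of an algebraic model $X_0$ with $K_{X_0}\cdot[\mathrm{fibre}]=2g-2$ and that the $e^{2g-2}$--coefficient of $\cald_{X_0}([\mathrm{fibre}])$ is nonzero. Donaldson's theorem gives nonvanishing for powers of a hyperplane class; it neither identifies $K_{X_0}$ as a basic class nor excludes cancellation among the several basic classes that may pair maximally with the fibre. You would need the Kronheimer--Mrowka result on minimal surfaces of general type together with a uniqueness statement in the spirit of Proposition \ref{prop:lf-nonminimal-uniqueness}, or, more cleanly, Mu\~noz's identification of $\phi^w_{\Sigma\times D^2}(1)$ with the unit of the Frobenius algebra $I_*(S^1\times\Sigma)_w$, whose component in every generalized eigenspace is automatically nonzero.
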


We will first establish Theorem \ref{thm:lf-nonzero} for all $g\geq \ming$, deduce Theorem \ref{thm:nonvanishing} as a corollary, and then use Theorem \ref{thm:nonvanishing} to prove the remaining cases of Theorem \ref{thm:lf-nonzero}.  If $X$ does not have simple type, we will prove analogously (Theorem \ref{thm:lf-nonzero-nonsimple}) that there is some $c\neq 0$ for which $D^w_X(h^n)$ is asymptotic to $c(2g-2)^n$ for all large $n\equiv -w^2-\frac{3}{2}(b^+(X)+1)\pmod{4}$.

In Section \ref{sec:lf-basic-classes} we will prove the following theorem, which may be of independent interest; the analogous result for the Heegaard Floer $4$-manifold invariants was proved by Ozsv{\'a}th and Szab{\'o} \cite{OS-symplectic}, but we could not find a complete proof of the Seiberg--Witten version in the literature.

\begin{theorem}
Let $X\to S^2$ be a relatively minimal Lefschetz fibration with generic fiber $\Sigma$ of genus $g \geq 2$, and suppose that $b^+(X) > 1$.  Then the canonical class $K_X$ is the unique Seiberg--Witten basic class of $X$ satisfying $K\cdot \Sigma = 2g - 2$.
\end{theorem}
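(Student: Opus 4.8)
The plan is to combine Gompf's symplectic structure with Taubes' identification of the Seiberg--Witten and Gromov invariants and positivity of intersections for an almost complex structure adapted to the fibration. For existence, Gompf's theorem provides a symplectic form $\omega$ on $X$ making $\pi\colon X\to S^2$ a symplectic fibration with $\Sigma$ a symplectic submanifold; I would fix a compatible almost complex structure $J$ for which $\pi$ is $(J,j)$-holomorphic, $j$ the standard structure on $S^2$. Then $c_1(\mathfrak s_{\mathrm{can}})=-K_X$, Taubes' nonvanishing theorem gives $SW_X(\mathfrak s_{\mathrm{can}})=\pm1$, so by conjugation symmetry $K_X$ is a basic class; the adjunction equality for the symplectic surface $\Sigma$, with $[\Sigma]^2=0$, gives $K_X\cdot\Sigma=2g-2$. (The Seiberg--Witten adjunction inequality applied to $\Sigma$ --- valid since $g\ge2$, $[\Sigma]^2=0$, and $b^+(X)>1$ --- shows moreover that $2g-2$ is the largest value of $K\cdot\Sigma$ over basic classes, so the assertion is a sharpness statement.)

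For uniqueness, suppose $SW_X(\mathfrak s)\ne0$ with $c_1(\mathfrak s)\cdot\Sigma=2g-2$; I want $c_1(\mathfrak s)=K_X$. Passing to the conjugate $\bar{\mathfrak s}$, which still has nonzero invariant but with $c_1(\bar{\mathfrak s})\cdot\Sigma=-(2g-2)$, let $E\in H_2(X;\zz)$ be the class with $c_1(\bar{\mathfrak s})+K_X=2\,\mathrm{PD}(E)$. Taubes' theorem $SW=Gr$ gives $Gr_X(E)\ne0$; one computes $E\cdot\Sigma=\tfrac12(c_1(\bar{\mathfrak s})+K_X)\cdot\Sigma=0$, and $d(\bar{\mathfrak s})\ge0$ (as $SW_X(\bar{\mathfrak s})\ne0$) becomes $E\cdot E-K_X\cdot E\ge0$. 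So it is enough to show that any class $E$ with $Gr_X(E)\ne0$, $E\cdot\Sigma=0$, and $E\cdot E-K_X\cdot E\ge0$ must vanish: then $c_1(\bar{\mathfrak s})=-K_X$, i.e.\ $c_1(\mathfrak s)=K_X$.

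Now the fibration comes in. Since $Gr_X(E)\ne0$, I would choose embedded $J'$-holomorphic subvarieties in class $E$ for a sequence of generic $\omega$-tame structures $J'\to J$ and pass to a Gromov limit, a $J$-holomorphic curve of total homology class $E$. Because $\pi$ is $J$-holomorphic, every irreducible component is horizontal (maps onto $S^2$) or vertical (lies in one fiber), and positivity of intersections gives $[C']\cdot\Sigma=\deg(\pi|_{C'})>0$ for a horizontal component $C'$; as $E\cdot\Sigma=0$, there are none, so $E$ is an effective sum of classes of fibers and of irreducible components of singular fibers. In a relatively minimal Lefschetz fibration every such component $C'$ has $K_X\cdot C'\ge0$ (by adjunction, since $C'$ is not a $(-1)$-sphere, and equal to $2g-2>0$ for a whole fiber), and the intersection form on the components of a single fiber is negative semidefinite with radical spanned by the fiber class. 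Hence $K_X\cdot E\ge0$, $E\cdot E\le0$, and $E\cdot E=0$ only if $E$ is a non-negative multiple of $[\Sigma]$; with $E\cdot E-K_X\cdot E\ge0$ this forces $E\cdot E=K_X\cdot E=0$, so $E=a[\Sigma]$ with $a(2g-2)=0$, giving $a=0$ and $E=0$. Therefore $c_1(\mathfrak s)=K_X$.

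The two reductions are formal; the real content is the step just sketched, and there the delicate points are setting up the fibered $J$ with enough genericity that Gromov compactness applies and that the horizontal/vertical alternative is literally the stated intersection-number dichotomy, and then handling the components of the singular fibers correctly --- in particular $(-2)$-sphere components (allowed if one permits multi-nodal fibers), for which $K_X\cdot C'=0$, so that they cannot be excluded outright but must instead be absorbed into the fiber-class argument. These fiber components are exactly what could otherwise produce a basic class other than $K_X$ with $K\cdot\Sigma=2g-2$, and it is only relative minimality (forbidding $(-1)$-spheres in fibers) together with $g\ge2$ (making $K_X\cdot\Sigma>0$) that rules them out. Everything rests on Taubes' theorem $SW_X=Gr_X$, which is also what restricts the argument to $b^+(X)>1$.
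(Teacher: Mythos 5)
Your proof is correct, but it takes a genuinely different route from the paper. You convert the problem into one about Gromov invariants via Taubes's $SW=Gr$ and then exploit the $J$-holomorphic geometry of the fibration: after conjugating, the class $E$ with $Gr_X(E)\neq 0$ and $E\cdot\Sigma=0$ is forced by positivity of intersections to be an effective sum of fiber components, and Zariski-type negative semidefiniteness together with $K_X\cdot C'>0$ for every component (here relative minimality and $g\geq 2$ enter) kills it against the constraint $E^2-K_X\cdot E\geq 0$. The paper instead never produces a holomorphic curve: it varies the Gompf--Thurston form $\omega=f^*\omega_{S^2}+t\eta$ over all admissible $[\eta]$ and uses only the inequality $K\cdot[\omega]\leq K_X\cdot[\omega]$ to pin $PD(K_X-K)$ into the span of $\Sigma$ and the reducible-fiber components; it then shows $K_X^2-K^2$ is a sum of nonnegative terms vanishing only when $K=K_X$ modulo torsion, and finishes with the dimension formula (negative expected dimension of the monopole moduli space) plus the equality case of Taubes's $|K\cdot[\omega]|\leq K_X\cdot[\omega]$. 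The trade-offs: the paper's first two steps are invariant-agnostic and are reused verbatim for Donaldson basic classes later in the paper, which your Gromov-theoretic argument cannot do; your argument needs the full strength of $SW=Gr$ and a Gromov-compactness limit for the non-generic fibration-adapted $J$ (the delicate point you rightly flag), whereas the paper only uses the ``soft'' Taubes results. Two small remarks: in a genuine Lefschetz fibration each singular fiber has a single node, so reducible fibers have exactly two components of square $-1$ and your worry about multi-nodal fibers and $(-2)$-sphere components is unnecessary (though harmless); and in the presence of $2$-torsion you should define $E$ via the torsor structure on spin$^c$ structures ($\bar{\mathfrak{s}}=\mathfrak{s}_{\mathrm{can}}\otimes L_E$) rather than by solving $c_1(\bar{\mathfrak{s}})+K_X=2\,PD(E)$, so that $E=0$ really gives $c_1(\mathfrak{s})=K_X$ on the nose.
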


We cannot prove the analogous uniqueness result for Donaldson basic classes, assuming that $b_1(X)=0$ as well.  However, we will show that if a basic class $K$ satisfies $K\cdot \Sigma = 2g-2$ then either $K=K_X$ modulo torsion or $K^2 < K_X^2$, which we expect to be impossible; and we will also show (Proposition \ref{prop:lf-nonminimal-uniqueness}) that if every component of every fiber intersects some section of square $-1$, then $K\cdot \Sigma = 2g-2$ implies that $K=K_X$ up to torsion.  This last statement is in fact strong enough to complete the proof of Theorem \ref{thm:nonvanishing}.

As mentioned above, our approach to Theorems \ref{thm:nonvanishing} and \ref{thm:lf-nonzero} is based on the strategy used by Kronheimer and Mrowka \cite{KM-witten} in their celebrated proof of the Property P conjecture.  They used some specific cases of Witten's conjecture \cite{Witten} relating the Donaldson and Seiberg--Witten invariants, together with known facts about Seiberg--Witten theory, to prove that certain symplectic manifolds had nonzero Donaldson invariants; this immediately implied nonvanishing results for the instanton Floer homology of some $3$-manifolds which separate them.  In this paper we use the same known cases of Witten's conjecture (applying it to a slightly larger class of $4$-manifolds) together with a gluing theorem of Mu{\~n}oz \cite{Munoz-gluing} which determines the Donaldson invariants of a fiber sum in terms of the invariants of each summand.  In particular, all of our techniques were available at the time \cite{KM-witten} was published.  Kronheimer and Mrowka's proof \cite{KM-excision, KM-slides} of Theorem \ref{thm:lf-nonzero} proceeds instead via excision for instanton Floer homology, following the strategy used by Ozsv{\'a}th and Szab{\'o} in \cite{OS-symplectic}.

The organization of this paper is as follows.  In Section \ref{sec:background} we review the necessary background on Donaldson invariants, simple type, and the relation to Seiberg--Witten invariants.  Section \ref{sec:lf-basic-classes} is devoted to studying the basic classes of Lefschetz fibrations.  In Sections \ref{sec:fiber-sum} and \ref{sec:donaldson-fiber} we prove Theorem \ref{thm:lf-nonzero} for $g\geq \ming$ by applying Mu{\~n}oz's gluing theorem \cite{Munoz-gluing} and known cases of Witten's conjecture to fiber sums of a given Lefschetz fibration with other suitable fibrations.  In Section \ref{sec:proof-nonvanishing} we use the existence of Lefschetz pencils on symplectic manifolds \cite{Donaldson-pencils} to prove Theorem \ref{thm:nonvanishing}, and in Section \ref{sec:proof-lf-nonzero} we complete the proof of Theorem \ref{thm:lf-nonzero}.  Finally, in Section \ref{sec:lf-nonsimple} we discuss the analogous results for symplectic manifolds which do not have simple type.

\subsection*{Acknowledgments}

This work was originally motivated by the desire to prove Proposition \ref{prop:lf-relative-invariant} for a joint project with John Baldwin, who I would like to thank for his encouragement.  I am grateful to him as well as Aliakbar Daemi, Peter Kronheimer, Tom Mrowka, and Jeremy Van Horn-Morris for helpful conversations.  This work was supported by NSF postdoctoral fellowship DMS-1204387.

%
%

\section{Background on Donaldson invariants}
\label{sec:background}

%
%

\subsection{Basic classes and simple type}

Suppose that $X$ is a closed $4$-manifold with $b_{1}(X)=0$ and $b^{+}(X)>1$ odd and a fixed homology orientation, and fix a class $w\in H^{2}(X;\zz)$.  Let $\mathbb{A}(X)$ be the graded symmetric algebra on $H_{2}(X;\rr)\oplus H_{0}(X;\rr)$, where the positive generator $x\in H_{0}(X;\zz)$ has degree 4 and $H_{2}(X;\rr)$ lies in degree $2$. Then the Donaldson invariants \cite{Donaldson-polynomial, DK-book} corresponding to $U(2)$ bundles $E\to X$ with $c_{1}(E)=w$ form a linear map $D_{X}^{w}:\mathbb{A}(X)\to\rr$ as in \cite{KM-embedded}, and this map is zero on any homogeneous element of degree $d$ unless $d\equiv -2w^2 - 3(b^+(X)+1)\pmod{8}$.
\begin{definition}[\cite{KM-embedded}]
The manifold $X$ has \emph{simple type} if $D_{X}^{w}(x^{2}z)=4D_{X}^{w}(z)$ for all $z\in\mathbb{A}(X)$.
\end{definition}
Many complex surfaces with $b_1=0$ and $b^+ > 1$ are known to have simple type, including elliptic surfaces and complete intersections.  In fact, we expect all symplectic manifolds with $b_1=0$ and $b^+ > 1$ to have simple type, by Witten's conjecture (\cite{Witten}, see also Section \ref{ssec:witten-conj}) and work of Taubes \cite{Taubes-SWtoGR} in Seiberg--Witten theory.

For manifolds of simple type, given $h\in H_2(X;\zz)$ we can form the power series 
\[
\cald_X^w(h) = D_X^w\left(\left(1+\frac{x}{2}\right)e^h\right) = \sum_{i=0}^\infty \frac{D_X^w(h^i)}{i!} + \frac{1}{2}\sum_{i=0}^\infty \frac{D_X^w(xh^i)}{i!}.
\]
Kronheimer and Mrowka proved the following structure theorem:
\begin{theorem}[{\cite[Theorem 1.7]{KM-embedded}}]
\label{thm:structure-simple-type}
If $X$ has simple type and Betti numbers $b_{1}(X)=0$ and odd $b^{+}(X)>1$, then there are finitely many classes $K_{1},\dots,K_{s}\in H^{2}(X;\zz)$ such that
\[
\cald_{X}^{w}(h)=\exp\left(\frac{Q(h)}{2}\right)\sum_{r=1}^{s}(-1)^{(w^{2}+K_{r}\cdot w)/2}\beta_{r}e^{K_{r}\cdot h}
\]
as a function on $H^{2}(X;\rr)$. Here $Q$ is the intersection form of $X$, viewed as a quadratic function, and the $\beta_{r}$ are nonzero rational numbers.
\end{theorem}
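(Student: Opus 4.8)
The plan is to follow Kronheimer and Mrowka's original argument, which has three parts: an analytic input making $\cald_X^w$ a genuine function, a family of differential relations that constrain it, and a bookkeeping step for the dependence on $w$. First I would show that the formal series defining $\cald_X^w(h)$ converges and extends to an entire function of exponential type on $H^2(X;\cc)$. The simple type relation $D_X^w(x^2z)=4D_X^w(z)$ collapses the point-class tower, so the invariant is controlled by the numbers $D_X^w(h^d)$ and $D_X^w(xh^d)$; the required geometric bounds $|D_X^w(h^d)|\le C_h^{\,d}$ come from Kronheimer and Mrowka's recursion and asymptotics estimates, and this is where most of the analytic work lies. Comparing homogeneous pieces then shows that the Gaussian factor $e^{Q(h)/2}$ is exactly the expected leading behaviour $D_X^w(h^d)\sim (Q(h)/2)^d$, so $u:=e^{-Q/2}\cdot\cald_X^w$ is again entire of exponential type.

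Next, for each $\alpha=[\Sigma]\in H_2(X;\zz)$ I would represent $\alpha$ by an embedded surface and, after blowing up finitely many times, arrange its genus and self-intersection conveniently. Using the blow-up formula relating $D_X^w$ to $D^w_{X\#\cpbar}$ through a universal blow-up function, together with the relations for embedded surfaces, one obtains for a simple type manifold a \emph{constant-coefficient} linear ordinary differential equation $p_\alpha\!\left(\tfrac{d}{dt}\right)u(h+t\alpha)=0$; simple type is precisely what removes the variable coefficients (the Gaussian corrections already absorbed into $e^{Q/2}$), and adjunction bounds the characteristic roots by integers in a range depending on $\alpha^2$ and $g(\Sigma)$. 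An entire function of exponential type on $\cc^n$ satisfying such an equation in every integral direction is a finite sum $\sum_r P_r(h)e^{K_r\cdot h}$ with $K_r\in H^2(X;\rr)$; quantization of the roots forces $K_r\cdot\alpha\in\zz$ for every $\alpha$, hence $K_r\in H^2(X;\zz)$ modulo torsion, exponential type makes the set of $K_r$ finite, and a last use of simple type (the point class acting as $\pm2$ on each eigenspace) collapses each $P_r$ to a nonzero rational constant $\beta_r$; multiplying back by $e^{Q/2}$ gives the stated shape.

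Finally, to pin down the sign $(-1)^{(w^2+K_r\cdot w)/2}$, I would compare $\cald_X^w$ with $\cald_X^{w+2\gamma}$: changing $w$ by $2\gamma$ multiplies each Donaldson invariant by a universal sign depending on $\gamma$, $w$, and the parity of the inserted class, and matching the exponential terms $\beta_r(w)e^{K_r\cdot h}$ over all choices of $w$ forces $\beta_r(w)=(-1)^{(w^2+K_r\cdot w)/2}\beta_r$ with $\beta_r$ independent of $w$; one also records $\cald_X^w(-h)=\pm\cald_X^w(h)$, so $\{(K_r,\beta_r)\}$ is symmetric under $K_r\mapsto -K_r$. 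The main obstacle is the middle step: extracting the constant-coefficient recursion requires the full strength of Kronheimer and Mrowka's embedded-surface machinery and the blow-up formula, and it is there that simple type is genuinely used; the convergence estimate is also substantial, whereas the passage from the ODEs to the exponential form and the sign bookkeeping are comparatively formal.
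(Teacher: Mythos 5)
This theorem is not proved in the paper: it is quoted verbatim as Theorem~1.7 of Kronheimer--Mrowka \cite{KM-embedded}, so there is no in-paper argument to compare yours against. Your outline is nonetheless a recognizably faithful reconstruction of the original proof --- convergence of the series, constant-coefficient recurrences in each direction $[\Sigma]$ extracted from the embedded-surface relations together with the blow-up formula, passage from line-by-line exponential sums to a global finite exponential sum, and the $w$-dependence bookkeeping --- and you correctly locate where simple type enters (collapsing the point-class tower and killing the polynomial prefactors $P_r$). One quantitative slip worth fixing: the convergence bound $|D_X^w(h^d)|\le C_h^{\,d}$ you quote is too strong and is actually inconsistent with the conclusion, since it would make $t\mapsto\cald_X^w(th)$ of exponential type and hence incompatible with the Gaussian factor $e^{Q(h)t^2/2}$; the estimate Kronheimer and Mrowka prove (and need) has the shape $|D_X^w(h^d)|\le \frac{d!}{\lfloor d/2\rfloor!}\,C^d$, giving an entire function of order $2$, after which $e^{-Q/2}\cald_X^w$ is of exponential type as you say. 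Similarly, the leading behaviour should read $D_X^w(h^{2d})\sim \frac{(2d)!}{d!}\left(\frac{Q(h)}{2}\right)^{d}$ rather than $D_X^w(h^d)\sim (Q(h)/2)^d$. Neither slip affects the architecture of the argument.
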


The $K_{r}$ are called the \emph{basic classes} of $X$, or possibly the Donaldson basic classes to emphasize that these are not necessarily the same as the Seiberg--Witten basic classes.  We remark that the Donaldson basic classes are only well-defined up to torsion, so throughout this paper we will actually think of them as elements of $H^2(X;\zz)/\mathrm{torsion}$.

\begin{proposition}[{\cite[Theorem 8.1]{KM-embedded}}]
\label{pro:tight-implies-simple}
If $b_{1}(X)=0$ and $b^{+}(X)>1$ is odd, and $X$ contains an embedded surface $S$ with $[S]^{2}=2g(S)-2 > 0$, then $X$ has simple type.
\end{proposition}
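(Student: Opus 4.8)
The result is due to Kronheimer and Mrowka, and the plan is to follow the strategy of \cite{KM-embedded}. First I would recast the conclusion: by definition $X$ has simple type exactly when, for each $w$, the operator ``multiplication by $x^2-4$'' carries $\mathbb{A}(X)$ into $\ker D_X^w$, so it suffices to produce, for every $z\in\mathbb{A}(X)$, the single relation $D_X^w\big((x^2-4)z\big)=0$. The only geometric hypothesis available is the embedded surface $S$, of genus $g$ with $[S]^2=2g(S)-2>0$ (so $g\ge 2$), and the content of the proof is that this one surface forces that relation.

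The heart of the matter is to extract relations among the numbers $D_X^w\big(x^a[S]^b z\big)$ from the presence of $S$. Following \cite{KM-embedded} one studies the anti-self-dual moduli spaces on $X$ after inserting a long neck along the boundary of a tubular neighborhood of $S$, namely the circle bundle over $S$ of Euler number $[S]^2=2g-2$; bubbling and dimension counting for connections with energy concentrated near $S$, together with the instanton Floer homology of that circle bundle, yield a finite-order linear recursion satisfied by $b\mapsto D_X^w\big(x^a[S]^b z\big)$ whose characteristic data depend only on $g$ and $[S]^2$. The role of the equality $[S]^2=2g-2$ is that it is precisely the case in which this recursion degenerates so that the admissible eigenvalues of the point-class operator are forced to be $\pm 2$; equivalently, the restriction of $\cald_X^w$ to the subalgebra generated by $x$ and $[S]$ becomes a finite combination of pure exponentials, and unwinding this gives $D_X^w\big((x^2-4)z\big)=0$, as required.

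The principal obstacle is the gauge theory of the middle step: controlling the family of moduli spaces near $S$, accounting for reducibles and for the part of the energy that escapes down the neck, and pinning down the relevant Floer homology precisely enough to read off the recursion; the ensuing combinatorics in the boundary case $[S]^2=2g-2$ is then routine. A more modern alternative would be to first invoke the finite-type theorem of Mu{\~n}oz and Fintushel--Stern to write $\cald_X^w(h)$ as a finite sum of terms $e^{Q(h)/2+\kappa_i\cdot h}$ multiplied by polynomials in $h$ (together with possible oscillatory terms), then apply the Donaldson-type adjunction inequality for $S$ to bound the exponents $\kappa_i$ and the orders of the polynomials in the $[S]$-direction, and observe that when $[S]^2=2g-2$ the sharp case of that inequality leaves no room for the polynomial or oscillatory corrections; but this still rests on the embedded-surface estimates of \cite{KM-embedded}.
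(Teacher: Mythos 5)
The paper offers no proof of this proposition: it is imported verbatim as \cite[Theorem 8.1]{KM-embedded}, so there is nothing internal to compare your argument against. Judged on its own terms, your first sketch has the right shape for Kronheimer and Mrowka's actual argument --- stretch a neck along the circle bundle of Euler number $[S]^2=2g(S)-2$ over $S$, extract a universal recursion for $b\mapsto D^w_X(x^a[S]^bz)$ from the moduli spaces with energy concentrated near $S$, and observe that the tight case forces the point-class operator to satisfy $x^2=4$ with no nilpotent part --- but as written it is an outline, not a proof: the recursion, the identification of the relevant Floer-theoretic eigenvalue data, and the control of reducibles and escaping energy are exactly the content of \cite{KM-embedded}, and you defer all of it. So if the intent was to supply a self-contained proof, there is a genuine gap, namely the entire gauge-theoretic core.

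That said, your ``modern alternative'' in the last paragraph is essentially a complete argument given results already quoted in this section of the paper, and it is worth making explicit. By Mu{\~n}oz's finite-type theorem, every $X$ with $b_1=0$ and $b^+>1$ has finite type of some order, with basic classes $K_r$ carrying orders $d(K_r)\ge 0$, and $X$ has simple type precisely when every $d(K_r)=0$ (or when $D^w_X\equiv 0$, in which case simple type is vacuous). Applying the higher-type adjunction inequality $|K\cdot S| + S^2 + 2d(K) \le 2g(S)-2$ of \cite[Theorem 1.7]{Munoz-higher} to the tight surface $S$, which is legitimate since $S^2=2g(S)-2>0$ and $g(S)\ge 2$, gives $|K\cdot S| + 2d(K) \le 0$, hence $d(K)=0$ for every basic class, and simple type follows. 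This route is non-circular (Mu{\~n}oz's inequality is proved without assuming simple type) but anachronistic: it rests on the same embedded-surface analysis, packaged a posteriori, whereas Kronheimer and Mrowka's Theorem 8.1 predates the finite-type theorem and must be proved directly from the neck-stretching relations. Either way, the proposition is correctly treated in the paper as a black-box citation, and your second argument is the one that could be written out in full using only results the paper already invokes.
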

Such a surface is sometimes called a \emph{tight} surface. For example, if $X$ as above is symplectic and contains a closed Lagrangian surface $L$ of genus at least $2$, then $L$ is necessarily tight and so
$X$ has simple type.  Similarly, if $X$ has a Lagrangian torus in an odd homology class, then it must have simple type: this follows from a result of Mu{\~n}oz \cite[Proposition 9.3]{Munoz-fukaya-floer}, which says in part that if $b_1(X)=0$ and $b^+(X) > 1$, and $X$ contains a homologically odd surface $\Sigma$ of self-intersection $0$ and genus at most $2$, then $X$ has simple type.  The cited result also implies that $X$ has simple type if it admits a genus 2 Lefschetz fibration over $S^2$.

Finally, we remark that Mu{\~n}oz \cite{Munoz-fukaya-floer} has shown that all $4$-manifolds with $b_1=0$ and $b^+ > 1$ have {\em finite type} of some order $n\geq 0$, meaning that $D^w_X((x^2-4)^n z) = 0$ for all $z$.  In this case there is still a notion of basic class and related structure theorem \cite{Munoz-nonsimple, Munoz-donaldson-nonsimple}.  These basic classes satisfy an adjunction inequality \cite[Theorem 1.7]{Munoz-higher} of the form $|K\cdot\Sigma| + \Sigma^2 + 2d(K) \leq 2g-2$ whenever $\Sigma$ is an embedded surface of genus $g\geq 1$ and either $\Sigma^2 > 0$ or $\Sigma^2=0$ and $\Sigma$ represents an odd homology class; here $d(K)\geq 0$ is the ``order of finite type'' of $K$.  (When $X$ has simple type, all basic classes have $d(K)=0$ and so this reduces to the usual adjunction inequality $|K\cdot \Sigma| + \Sigma^2 \leq 2g-2$ as in \cite{KM-embedded}.)

%
%

\subsection{Witten's conjecture}
\label{ssec:witten-conj}

Witten \cite{Witten} conjectured the following relationship between Donaldson and Seiberg--Witten invariants.

\begin{conjecture}
\label{con:witten-conjecture}
Suppose that $X$ is a closed, oriented $4$-manifold with $b_1(X)=0$ and $b^+(X) > 1$ odd.  If $X$ has Seiberg--Witten simple type, then it has Donaldson simple type as well, the Seiberg--Witten and Donaldson basic classes coincide, and 
\[ \cald^w_X(h) = c(X) \exp\left(\frac{Q(h)}{2}\right) \sum_{r=1}^s (-1)^{(w^2+K_r \cdot w)/2} SW(K_r) e^{K_r\cdot h}, \]
where $K_1,\dots,K_s$ are the basic classes of $X$ and $c(X)$ is a nonzero rational number.
\end{conjecture}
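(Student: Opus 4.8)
The plan is to realize both the Donaldson and Seiberg--Witten invariants as contributions coming from a single moduli space — the moduli space $\mathcal{M}$ of non-abelian (PU(2)) monopoles on $X$ — following the program of Pidstrigach--Tyurin and its rigorous development by Feehan--Leness. This $\mathcal{M}$ carries a natural circle action whose fixed locus has two types of strata: the anti-self-dual stratum, where the spinor field vanishes and one recovers the instanton moduli space underlying $D^w_X$, and a family of Seiberg--Witten strata, each of which is a Seiberg--Witten moduli space (of possibly lower charge, after ``bubbling'' points off into $X$). After perturbing to achieve transversality away from the reducibles and excising small neighborhoods of the reducible locus and of the lower-level ideal points, the (compactified) space $\mathcal{M}$ provides an oriented cobordism between the link of the instanton stratum and the disjoint union of the links of the Seiberg--Witten strata.

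First I would set up the cohomological pairings: cut $\mathcal{M}$ down by the same geometric representatives of the $\mu$-classes for $H_2(X;\rr)$ and the point class $x$ that define $D^w_X$, noting that on the instanton stratum these pairings reproduce the numbers $D^w_X(h^{\delta-2m}x^m)$. Next, on each Seiberg--Witten stratum associated to a $\mathrm{spin}^c$ structure with $c_1 = K$, I would evaluate the corresponding pairings over the link; this is a universal computation whose answer is $SW(K)$ times a polynomial in $K\cdot h$, $Q(h)$, $c_1(X)^2$, $\chi(X)$ and $\sigma(X)$, determined entirely by the normal data of the embedded stratum and the PU(2)-monopole gluing ``profile.'' Since the cobordism has boundary equal to the difference of these links, summing the contributions expresses $D^w_X$ as an explicit universal combination of Seiberg--Witten invariants. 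Finally I would impose the simple-type hypothesis: it forces the universal polynomials to collapse to their top-degree (adjunction) terms, and reassembling the generating function $\cald^w_X(h) = D^w_X\!\left(\left(1+\frac{x}{2}\right)e^h\right)$ then yields exactly the stated formula — the Seiberg--Witten basic classes appear as the $K_r$, the sign $(-1)^{(w^2+K_r\cdot w)/2}$ arises from comparing the two orientation conventions via the homology orientation, and $c(X)$ is the single universal constant that remains; one fixes $c(X)\neq 0$ by evaluating both sides on a model surface (e.g.\ an elliptic surface) where everything is known. Donaldson simple type is then automatic, since the right-hand side manifestly satisfies the simple-type relation, and coincidence of the two sets of basic classes is read off from the exponents.

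The main obstacle — and the reason the conjecture remains open in full generality — is the analysis of $\mathcal{M}$ near the Seiberg--Witten strata: these are precisely where transversality fails, where the Uhlenbeck compactification is most singular, and where one must build a gluing theory for PU(2) monopoles to identify the links. Proving that the links are the expected sphere bundles, with the expected orientations and multiplicities, and that the universal polynomials have the predicted leading behavior, is the technical heart of Feehan--Leness's work; it is fully clean only in low charge or under extra hypotheses (such as ``superconformal simple type''). For the present paper this difficulty can be sidestepped entirely: only the cases of Witten's conjecture already established in the literature are invoked — in particular for manifolds assembled by fiber sums from elliptic surfaces and the like, where the Donaldson and Seiberg--Witten series are each independently computable and the identity, $c(X)\neq 0$ included, can be checked directly — so the full PU(2)-monopole gluing analysis is never actually needed here.
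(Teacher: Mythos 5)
The statement you were asked about is labelled a \emph{conjecture} in the paper, and the paper never proves it: it only records the statement (due to Witten) and then invokes the special cases already established in the literature, namely \cite[Corollary 7]{KM-witten} (built on Feehan--Leness) for manifolds with a tight surface, a $(-1)$-sphere, and the numerical invariants of an even-degree hypersurface in $\mathbb{CP}^3$, together with the blow-down observation of Proposition \ref{prop:witten-blowdown} and the list of properties in Remark \ref{rem:witten-conjecture-general}. So there is no ``paper's own proof'' to compare against, and your proposal should not be read as a proof of the statement.

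That said, your write-up is an accurate and honest survey of the intended strategy. The $PU(2)$-monopole cobordism program of Pidstrigach--Tyurin and Feehan--Leness is indeed the known route: the circle-equivariant moduli space interpolating between the anti-self-dual stratum and the Seiberg--Witten strata, the links contributing universal polynomials in $K\cdot h$, $Q(h)$, $c_1^2$, $\chi$, $\sigma$ multiplied by $SW(K)$, and the collapse under simple type to the stated exponential formula. You also correctly locate the genuine gap: the gluing and link analysis at the lower Uhlenbeck levels, where the Seiberg--Witten strata sit inside the compactification with ``bubbled'' points, is precisely what has not been carried out in full generality, and without it the universal polynomials (hence $c(X)$ and the signs) are not determined. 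This is why the statement remains a conjecture. Your closing observation --- that the present paper sidesteps all of this by only ever applying the cases already proved by Kronheimer--Mrowka and Feehan--Leness, extended to other manifolds with the same four properties via Remark \ref{rem:witten-conjecture-general} and Proposition \ref{prop:witten-conjecture-manifolds} --- matches exactly how the paper actually uses Conjecture \ref{con:witten-conjecture}. In short: no error, but also no proof; you have correctly identified that none exists.
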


The full conjecture also predicts that $c(X) = 2^{2+(7e+11\sigma)/4}$ where $e$ and $\sigma$ are the Euler characteristic and signature of $X$.  In any case, we note that this is exactly the formula of Theorem \ref{thm:structure-simple-type} with $\beta_r = c(X)\cdot SW(K_r)$.

Using work of Feehan and Leness \cite{FL}, Kronheimer and Mrowka established the following special case of Witten's conjecture: 
\begin{theorem}[{\cite[Corollary 7]{KM-witten}}]
\label{thm:witten-conjecture}
Suppose that $X$ as above contains a tight surface and a sphere of self-intersection $-1$, and that $X$ has the same Euler characteristic and signature as a smooth hypersurface in $\mathbb{CP}^3$ of even degree at least 6.  Then $X$ satisfies Conjecture \ref{con:witten-conjecture}.
\end{theorem}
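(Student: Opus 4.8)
Since this is \cite[Corollary 7]{KM-witten}, the plan is to recall how Kronheimer and Mrowka deduced it from the $\mathrm{SO}(3)$-monopole cobordism program of Feehan and Leness \cite{FL}, rather than to reprove it from scratch. The starting point is the cobordism formula of \cite{FL}: integrating the relevant $\mu$-classes over the cobordism joining a link of the $w$-twisted Donaldson moduli space to the Seiberg--Witten moduli spaces of $X$ yields, for each monomial $z$ in $h$ and $x$, an expression
\[ D_X^w(z) = \bigl(\text{Witten term}\bigr) + \bigl(\text{correction terms}\bigr), \]
where the Witten term is the pairing against $z$ of the conjectural series $c(X)\exp\bigl(\tfrac12 Q(h)\bigr)\sum_r(-1)^{(w^2+K_r\cdot w)/2}SW(K_r)\,e^{K_r\cdot h}$, and the correction terms are supported on Seiberg--Witten moduli spaces occurring at lower levels of the cobordism, with coefficients that are universal polynomials depending only on $w$, the degree, and the pair $\bigl(\chi(X),\sigma(X)\bigr)$.

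First I would use the hypotheses to bring $X$ into the scope of \cite{FL} and to fix these universal coefficients. The embedded tight surface $S$ forces $X$ to have Donaldson simple type by Proposition \ref{pro:tight-implies-simple}, so that $\cald_X^w$ is defined and has the form of Theorem \ref{thm:structure-simple-type}; moreover the adjunction inequality $|K\cdot[S]| \leq 2g(S)-2 = [S]^2$ constrains the basic classes in the manner that \cite{FL} requires in order to control the correction terms. The hypothesis that $\bigl(\chi(X),\sigma(X)\bigr)$ agree with those of a smooth hypersurface $V\subset\mathbb{CP}^3$ of even degree $d\geq 6$ plays a double role: it places $X$ within the numerical range in which Feehan and Leness control the number of levels entering the cobordism, and it supplies a calibration model, since $V$ is a minimal complex surface of general type with $b_1=0$ and odd $b^+>1$, with Seiberg--Witten basic classes $\pm(d-4)H$, and with a Donaldson series known independently from algebraic geometry to have exactly the form of Theorem \ref{thm:structure-simple-type} for $s=2$; comparing the two sides for $V$ then identifies the universal polynomials, which are the same for $X$ and for $V$. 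The embedded $(-1)$-sphere enters through the blow-up formulas for both invariants: it presents $X$ as a blow-up $X'\#\cpbar$ and allows one to match $X$ against a corresponding blow-up of $V$ stratum by stratum, the even-degree assumption ensuring that a class $w$ with odd pairing is available on the model.

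Once the universal coefficients have been identified with those of the Witten term, the correction terms for $X$ acquire explicit coefficients, and the remaining step is to show that these coefficients annihilate every contribution from the lower-level Seiberg--Witten moduli spaces of $X$ --- equivalently, that each Seiberg--Witten stratum enters the cobordism with precisely Witten's multiplicity while the higher strata contribute nothing. I expect this to be the main obstacle. It is the analytically deepest part of \cite{FL}, depending on gluing theory for $\mathrm{SO}(3)$-monopoles together with the simple-type constraint furnished by the tight surface, and the three hypotheses in the statement --- a tight surface, a $(-1)$-sphere, and the numerical coincidence with an even hypersurface of degree at least $6$ --- are precisely what is needed to place $X$ within its reach. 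Granting it, one obtains $D_X^w(z) = \bigl(\text{Witten term}\bigr)$ for every $z$, which is exactly the assertion of Conjecture \ref{con:witten-conjecture} for $X$, with $c(X)$ a nonzero rational number.
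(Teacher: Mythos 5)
The paper offers no proof of this statement: it is imported verbatim from \cite[Corollary 7]{KM-witten}, and the only internal commentary is Remark \ref{rem:witten-conjecture-general}, which records exactly which properties of the model hypersurface $X_*$ Kronheimer and Mrowka's argument consumes. So treating the result as a citation, as you do, is the same move the paper makes, and your outline of the external argument is broadly right: the $\mathrm{SO}(3)$-monopole cobordism of \cite{FL} gives a universal expression for the Donaldson invariants in terms of the Seiberg--Witten data and $(\chi,\sigma)$, the tight surface supplies simple type, the $(-1)$-sphere and spin model enter through the blow-up formulas and the choice of $w$, and the hypersurface with matching characteristic numbers serves as the calibration model.

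Two points in your reconstruction are off, however, and the first is a real misunderstanding of how the argument closes. The final step is \emph{not} to ``show that these coefficients annihilate every contribution from the lower-level Seiberg--Witten moduli spaces.'' The Feehan--Leness theorem, in the form quoted by Kronheimer and Mrowka, already delivers the conclusion in Witten's shape: $\cald^w_X = \exp(Q/2)\sum_r (-1)^{(w^2+K_r\cdot w)/2}\lambda\, SW(K_r)\, e^{K_r}$ with a single undetermined universal constant $\lambda$ depending only on $(\chi,\sigma)$ (all basic classes of a manifold of Seiberg--Witten simple type have the same square, so no further dependence survives). Once that is granted there is no residual vanishing statement to prove; the only remaining task is to show $\lambda\neq 0$. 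Second, that nonvanishing is not obtained from an independent algebro-geometric computation of the hypersurface's full Donaldson series, which is not available in the form you assume; as item (2) of Remark \ref{rem:witten-conjecture-general} indicates, one needs only Donaldson's theorem that the polynomial invariants of the complex surface $X_*$ are not identically zero, combined with $SW(\pm K_{X_*})=1$, applied to the universal formula for $X_*$ itself, to conclude $\lambda\neq 0$. With those corrections your proposal, like the paper, appropriately treats the Feehan--Leness input as a black box rather than attempting to reprove it.
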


\begin{remark}
\label{rem:witten-conjecture-general}
The proof of \cite[Corollary 7]{KM-witten} used the following facts about such hypersurfaces $X_*\subset \mathbb{CP}^3$, in addition to the requirements that $b_1(X_*)=0$, $b^+(X_*) > 1$ is odd, and $H^2(X_*;\zz)$ has no $2$-torsion:
\begin{enumerate}
\item The only Seiberg--Witten basic classes are $\pm K_{X_*}$, and $SW(\pm K_{X_*}) = 1$. 
\item The Donaldson invariants of $X_*$ are not identically zero.
\item $X_*$ is spin. 
\item $X_*$ contains a tight surface and a symplectic surface $S$ with $S\cdot S \geq 0$ and $K_{X_*} \cdot S \neq 0$.
\end{enumerate}
We will make use of this observation in Section \ref{sec:proof-lf-nonzero}, by finding other $4$-manifolds which have the same properties and thus also satisfy Conjecture \ref{con:witten-conjecture}, in order to complete the proof of Theorem \ref{thm:lf-nonzero}.
\end{remark}

Finally, in some cases we can prove that Witten's conjecture still holds after blowing down a $4$-manifold which satisfies the conjecture.  In order to do so, we recall how the Donaldson and Seiberg--Witten invariants behave under blowups.

\begin{theorem}[\cite{FS-blowup,KM-embedded}]
\label{thm:donaldson-blowup}
If $X$ has Donaldson simple type, then $\tilde{X}=X\#\cpbar$ does as well and their Donaldson series satisfy 
\begin{eqnarray*}
\cald_{\tilde{X}}^{w}(h) & = & \cald_{X}^{w}(h)\cdot\exp\left(-\frac{(E\cdot h)^{2}}{2}\right)\cosh(E\cdot h)\\
\cald_{\tilde{X}}^{w+E}(h) & = & -\cald_{X}^{w}(h)\cdot\exp\left(-\frac{(E\cdot h)^{2}}{2}\right)\sinh(E\cdot h)
\end{eqnarray*}
where $E$ is dual to the exceptional divisor. If $X$ has basic classes $\{K_{r}\}$, then it follows that the basic classes of $\tilde{X}$ are $\{K_{r}\pm E\}$.
\end{theorem}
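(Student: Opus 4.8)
The statement is the Fintushel--Stern blowup formula \cite{FS-blowup}, also established in \cite{KM-embedded}, and the plan is to separate its one genuinely hard (analytic) ingredient from a short algebraic comparison with Theorem \ref{thm:structure-simple-type}. Since blowing up preserves $b_1=0$ and leaves the odd number $b^+>1$ unchanged, it makes sense to speak of the Donaldson series of $\tilde X$ once we know it has simple type, and we will obtain that along the way.

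The first step is the gluing picture behind \cite{FS-blowup}. The manifold $\tilde X=X\#\cpbar$ contains a separating $S^3$, and stretching the neck realizes every anti-self-dual moduli space on $\tilde X$ as a fibered product of moduli spaces on $X\setminus B^4$ and on $\cpbar\setminus B^4$ glued along the trivial connection on $S^3$. As $\cpbar\setminus B^4$ is a fixed manifold, its contribution is universal: there are power series $\mathfrak{B}^{\pm}$, depending only on the parity of $E\cdot w'$ for the bundle class $w'$ on $\tilde X$, such that for every $X$ of simple type
\[ \cald^{w}_{\tilde X}(h)=\cald^{w}_X(h)\cdot\mathfrak{B}^{+}(E\cdot h),\qquad \cald^{w+E}_{\tilde X}(h)=\cald^{w}_X(h)\cdot\mathfrak{B}^{-}(E\cdot h). \]
The same neck analysis shows that $\tilde X$ is again of simple type, which collapses $\mathfrak{B}^{\pm}$ to the elementary functions $\mathfrak{B}^{+}(t)=e^{-t^2/2}\cosh t$ and $\mathfrak{B}^{-}(t)=-e^{-t^2/2}\sinh t$; the exact normalization is pinned down either by the direct computation of the relative invariants of $\cpbar\setminus B^4$ in \cite{FS-blowup}, or by evaluating both sides on a single manifold whose Donaldson series is already known, such as an elliptic surface and its one-point blowup. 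Establishing the universality, the simple-type inheritance, and this normalization, while tracking orientations throughout, is the technical heart of the argument.

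Granting the two displayed identities, the remaining assertions are bookkeeping. Write $h=a+sE$ with $a\in H^2(X;\rr)$; then $E\cdot h=-s$, the intersection form satisfies $Q_{\tilde X}(h)=Q_X(a)-(E\cdot h)^2$, one has $(K_r\pm E)\cdot h=K_r\cdot a\pm(E\cdot h)$, and $(K_r\pm E)\cdot w=K_r\cdot w$ since $E\cdot w=0$. Substituting the Theorem \ref{thm:structure-simple-type} expansion $\cald^{w}_X(a)=\exp\!\left(\tfrac{Q_X(a)}{2}\right)\sum_r(-1)^{(w^2+K_r\cdot w)/2}\beta_r e^{K_r\cdot a}$ and expanding $\cosh$ gives
\[ \cald^{w}_{\tilde X}(h)=\exp\!\left(\tfrac{Q_{\tilde X}(h)}{2}\right)\sum_{r=1}^{s}(-1)^{(w^2+K_r\cdot w)/2}\,\tfrac{\beta_r}{2}\left(e^{(K_r+E)\cdot h}+e^{(K_r-E)\cdot h}\right). \]
As $\tilde X$ has simple type, Theorem \ref{thm:structure-simple-type} applies to it, and since the exponents in such an expansion are distinct this must be its structure formula; hence the basic classes of $\tilde X$ are exactly $\{K_r\pm E\}$ with nonzero coefficients $\tfrac{\beta_r}{2}$. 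Here distinctness of the $2s$ classes $K_r\pm E$ uses $E^2=-1$: a coincidence would force an $\rr$-combination of pullback classes to equal $\pm2E$, which pairs nontrivially with $E$. Feeding the same expansion into the $\cald^{w+E}$ identity and using $(w+E)^2=w^2-1$ and $(K_r\pm E)\cdot(w+E)=K_r\cdot w\mp1$ reproduces the structure formula for the bundle class $w+E$ with the same basic classes and coefficients; this is precisely where the overall minus sign in the $\sinh$ formula comes from, and it confirms consistency of the two identities.

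The main obstacle is the first step: controlling the neck-stretched moduli spaces on $X\#\cpbar$, proving that the $\cpbar$-side contribution is genuinely independent of $X$, verifying that the order of finite type does not grow so that $\tilde X$ is again of simple type, and computing the resulting universal series together with all orientation signs. It is the simple-type hypothesis that reduces an a priori transcendental generating function (expressible through Weierstrass $\sigma$-type functions in general) to the elementary $\cosh$ and $\sinh$ above; after that the comparison in the last two paragraphs is routine.
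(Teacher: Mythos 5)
This theorem is stated in the paper purely as a citation of the Fintushel--Stern blowup formula \cite{FS-blowup} (and \cite{KM-embedded}); the paper supplies no proof of its own, so there is nothing internal to compare your argument against. Your write-up is a faithful outline of how the result is actually proved in those references, and the half of it that you carry out in full --- the deduction of the basic classes $\{K_r\pm E\}$ from the two displayed identities --- is correct: the identities $Q_{\tilde X}(h)=Q_X(a)-(E\cdot h)^2$, $(K_r\pm E)\cdot w=K_r\cdot w$, $(w+E)^2=w^2-1$, $(K_r\pm E)\cdot(w+E)=K_r\cdot w\mp 1$, and the distinctness of the $2s$ classes all check out, and the sign bookkeeping for the $\sinh$ formula is consistent. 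The one thing to be clear about is that your first step (universality of the $\cpbar$-side contribution under neck-stretching, inheritance of simple type, and the normalization $\mathfrak{B}^{+}(t)=e^{-t^2/2}\cosh t$, $\mathfrak{B}^{-}(t)=-e^{-t^2/2}\sinh t$) is not proved but only described; that step \emph{is} the content of \cite{FS-blowup}, so as a self-contained proof your proposal has a gap exactly there, but as a justification of a cited background theorem it is at the same level of rigor as the paper itself, and you flag the deferral honestly.
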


\begin{theorem}[\cite{FS-immersed}]
\label{thm:sw-blowup}
If $X$ has Seiberg--Witten simple type with basic classes $\{K_r\}$, then $\tilde{X}=X\#\cpbar$ has basic classes $\{K_r \pm E\}$ where $E$ is dual to the exceptional divisor, and $SW_X(K_r) = SW_{\tilde{X}}(K_r \pm E)$.
\end{theorem}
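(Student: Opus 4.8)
The plan is to prove this by a neck-stretching comparison of Seiberg--Witten moduli spaces under the decomposition $\tilde X = (X\setminus B^4)\cup_{S^3}W$, where $W = \cpbar\setminus B^4$ is the disk bundle of Euler number $-1$ over $S^2$, so that the exceptional class $E$ generates $H^2(W;\zz)$ with $E^2 = -1$. Since $H^2(\tilde X;\zz)\cong H^2(X;\zz)\oplus\zz\langle E\rangle$ with $E$ orthogonal to the image of $H^2(X;\zz)$, every $\mathrm{Spin}^c$ structure $\tilde{\mathfrak{s}}$ on $\tilde X$ has $c_1(\tilde{\mathfrak{s}}) = \alpha + lE$ with $\alpha\in H^2(X;\zz)$ characteristic and $l$ odd; write $\mathfrak{s}_\alpha$ for the induced $\mathrm{Spin}^c$ structure on $X$.

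The first step is the dimension count. From $\chi(\tilde X) = \chi(X)+1$ and $\sigma(\tilde X) = \sigma(X)-1$ we get
\[ d_{\tilde X}(\alpha + lE) \;=\; \frac{(\alpha+lE)^2 - \big(2\chi(\tilde X)+3\sigma(\tilde X)\big)}{4} \;=\; d_X(\alpha) + \frac{1-l^2}{4}, \]
and the local version on the cap $W$ with its cylindrical end gives expected dimension $\frac{1-l^2}{4}$ for $c_1 = lE$. This is $0$ for $l = \pm 1$ and negative for $|l|\geq 3$, so after a generic perturbation the only $\mathrm{Spin}^c$ structures on $W$ that can carry solutions are those with $c_1 = \pm E$, for which $d_{\tilde X}(\alpha\pm E) = d_X(\alpha)$.

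Next comes the gluing theorem for connected sums along $S^3$. Stretch a long neck $S^3\times[-T,T]$ in $\tilde X$; since $S^3$ admits a positive-scalar-curvature metric its only finite-energy solution is reducible, so for $T\gg 0$ the moduli space of $\tilde{\mathfrak{s}} = \alpha\pm E$ on $\tilde X$ becomes diffeomorphic to the fiber product, over that one point, of the cylindrical-end moduli space of $\mathfrak{s}_\alpha$ on $X\setminus B^4$ and the cylindrical-end moduli space of $\pm E$ on $W$. The positive-scalar-curvature collar identifies the first factor with the closed moduli space of $(X,\mathfrak{s}_\alpha)$, contributing $SW_X(\alpha)$ after cutting down by $\mu(\mathrm{pt})$, so the theorem reduces to showing that the second factor is a single nondegenerate point of sign $+1$ for \emph{both} signs $c_1 = \pm E$; this I would check by an explicit analysis of the Seiberg--Witten equations on $W$ with, say, the restriction of a Fubini--Study-type metric, where the unique solution is concentrated on the exceptional $S^2$ and has surjective linearization. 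Combining these, $SW_{\tilde X}(\alpha\pm E) = SW_X(\alpha)$, and since a structure with $|l|\geq 3$ or with $\alpha$ non-basic for $X$ then contributes zero, the basic classes of $\tilde X$ are exactly $\{K_r\pm E\}$; as a byproduct $\tilde X$ inherits simple type, because $SW_X(\alpha) = 0$ unless $d_X(\alpha) = 0$.

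The main obstacle is this cap computation together with its bookkeeping: proving transversality for the cylindrical-end moduli spaces, fixing the orientation of the glued moduli space and matching homology orientations on $X$ and $\tilde X$ so the sign is $+1$ and not $-1$, and excluding reducibles throughout the neck-stretching---the last using $b^+(X)>1$, so that $b^+(X\setminus B^4) > 0$ and reducibles are avoided along a generic path of perturbations on the $X$-side. The gluing analysis itself---transversality of the fiber product and surjectivity of the linearized gluing map for large $T$---is standard but technical, and would be the bulk of the work.
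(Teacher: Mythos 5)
The paper does not prove this statement: it is quoted as a known result of Fintushel and Stern \cite{FS-immersed}, so there is no internal argument to compare yours against. Your neck-stretching outline is the standard modern route to the simple-type blowup formula, and its skeleton is sound: the splitting $\tilde X = (X\setminus B^4)\cup_{S^3}(\cpbar\setminus B^4)$, the observation that $l$ must be odd, the dimension count $d_{\tilde X}(\alpha+lE)=d_X(\alpha)+\tfrac{1-l^2}{4}$, and the use of the positive-scalar-curvature metric on $S^3$ to force the limiting configuration on the neck to be the unique reducible are all correct and are exactly the ingredients of the proofs in the literature.

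Two points in your sketch need repair, though neither is fatal. First, the cap analysis: on $W=\cpbar\setminus B^4$ one has $b^+=0$, so the reducible cannot be perturbed away, and for $c_1=\pm E$ the unique finite-energy solution (for a positive-scalar-curvature metric on the cap, which $\cpbar$ admits) is precisely that reducible with $\Phi=0$ --- not an irreducible ``concentrated on the exceptional sphere with surjective linearization.'' The gluing you need is therefore irreducible-to-reducible across a PSC neck; the linearization at the reducible has a stabilizer and a cokernel that must be handled by the standard obstruction/stabilizer bookkeeping, and this is where the sign and the identification $SW_{\tilde X}(\alpha\pm E)=SW_X(\alpha)$ actually get pinned down. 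Second, your exclusion of $|l|\geq 3$ ``after a generic perturbation on $W$'' does not quite make sense for the same reason (reducibles on a $b^+=0$ piece are not generic-perturbation phenomena); the clean argument is that simple type forces $d_X(\alpha)=0$ for any contributing $\alpha$, whence $d_{\tilde X}(\alpha+lE)=\tfrac{1-l^2}{4}<0$ for $|l|\geq 3$ and the invariant vanishes for dimension reasons on $\tilde X$ itself, using $b^+(\tilde X)>1$ to rule out reducibles there. With those corrections your outline matches the accepted proof; the remaining work is the (genuinely substantial) gluing analysis you already flag.
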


The following is then a straightforward consequence of Theorems \ref{thm:donaldson-blowup} and \ref{thm:sw-blowup}, as well as the immediate corollary of Fintushel and Stern's blowup formula \cite{FS-blowup} that if $X\#\cpbar$ has Donaldson simple type then so does $X$.
\begin{proposition}
\label{prop:witten-blowdown}
Let $X$ be a $4$-manifold with $b_1 = 0$ and $b^+ > 1$ odd, and suppose that $X$ has Seiberg--Witten simple type.  If $\tilde{X} = X\#\cpbar$ satisfies Conjecture \ref{con:witten-conjecture}, then so does $X$, and furthermore $c(X) = 2c(\tilde{X})$.
\end{proposition}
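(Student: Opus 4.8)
The plan is to deduce Conjecture~\ref{con:witten-conjecture} for $X$ from its validity for $\tilde X = X\#\cpbar$ by playing the Donaldson blowup formula (Theorem~\ref{thm:donaldson-blowup}) off against the Seiberg--Witten blowup formula (Theorem~\ref{thm:sw-blowup}); all of the analytic content is already contained in those two theorems and in the structure theorem, so the argument is essentially bookkeeping.

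First I would collect the preliminary reductions. Since $\tilde X$ satisfies Conjecture~\ref{con:witten-conjecture} it has Donaldson simple type, hence so does $X$ by the quoted corollary of Fintushel--Stern's blowup formula; moreover $b_1(\tilde X)=b_1(X)=0$ and $b^+(\tilde X)=b^+(X)>1$ is odd, so Theorem~\ref{thm:structure-simple-type} applies to both manifolds. Write $\{K_1,\dots,K_s\}$ for the Donaldson basic classes of $X$, with structure coefficients $\beta_r$. Theorem~\ref{thm:donaldson-blowup} then identifies the Donaldson basic classes of $\tilde X$ with $\{K_r\pm E\}$, while, since $X$ has Seiberg--Witten simple type by hypothesis, Theorem~\ref{thm:sw-blowup} identifies the Seiberg--Witten basic classes of $\tilde X$ with $\{L_j\pm E\}$ for $\{L_j\}$ the Seiberg--Witten basic classes of $X$, and gives $SW_X(L_j)=SW_{\tilde X}(L_j\pm E)$. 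Because the Donaldson and Seiberg--Witten basic classes of $\tilde X$ coincide (as $\tilde X$ obeys the conjecture) we get $\{K_r\pm E\}=\{L_j\pm E\}$; separating off the classes with $E$-coefficient $+1$ and using that $E$ is not torsion, this forces $\{K_r\}=\{L_j\}$. Thus the Donaldson and Seiberg--Witten basic classes of $X$ agree, which is the first assertion of the conjecture for $X$, and in particular $SW_X(K_r)\neq 0$ for each $r$.

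Next I would pin down the constant. Fix $w\in H^2(X;\zz)$ and regard it inside $H^2(\tilde X;\zz)$, so that $E\cdot w=0$ and the sign $(-1)^{(w^2+(K_r\pm E)\cdot w)/2}$ appearing in the structure theorem for $\tilde X$ equals $(-1)^{(w^2+K_r\cdot w)/2}$. Substituting the structure theorem for $X$ into the Donaldson blowup formula $\cald^w_{\tilde X}(h)=\cald^w_X(h)\cdot\exp(-(E\cdot h)^2/2)\cosh(E\cdot h)$, using $Q_{\tilde X}=Q_X\oplus\langle-1\rangle$ and $\cosh(E\cdot h)=\tfrac12(e^{E\cdot h}+e^{-E\cdot h})$, gives
\[
\cald^w_{\tilde X}(h)=\exp\!\left(\frac{Q_{\tilde X}(h)}{2}\right)\sum_{r=1}^s(-1)^{(w^2+K_r\cdot w)/2}\,\frac{\beta_r}{2}\left(e^{(K_r+E)\cdot h}+e^{(K_r-E)\cdot h}\right).
\]
Comparing this with the structure theorem applied directly to $\tilde X$, and using that the exponential functions $h\mapsto e^{L\cdot h}$ attached to the $2s$ distinct classes $L=K_r\pm E$ are linearly independent, I read off that the structure coefficient of $\tilde X$ at each of $K_r+E$ and $K_r-E$ equals $\beta_r/2$. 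On the other hand, since $\tilde X$ obeys the conjecture this same coefficient equals $c(\tilde X)\,SW_{\tilde X}(K_r+E)=c(\tilde X)\,SW_X(K_r)$. Hence $\beta_r=2c(\tilde X)\,SW_X(K_r)$ for every $r$; plugging this back into the structure theorem for $X$ yields exactly the formula of Conjecture~\ref{con:witten-conjecture} for $X$ with $c(X)=2c(\tilde X)$, a nonzero rational number which does not depend on $w$. So $X$ satisfies the conjecture.

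I do not expect a genuine obstacle here: the only point needing a little care is the coefficient comparison, where one must verify that the $2s$ classes $K_r\pm E$ are pairwise distinct (immediate since the $K_r$ are distinct modulo torsion and $E$ is not torsion) so that the associated exponentials are linearly independent, and that the characteristic-number signs are unchanged by the blowup (immediate from $E\cdot w=0$). One should also dispose of the degenerate case $s=0$, in which every Donaldson series of $\tilde X$, and hence of $X$, vanishes, $X$ has no basic classes of either type, and the conjecture holds for $X$ with the constant $2c(\tilde X)$ by default.
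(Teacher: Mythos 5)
Your proof is correct and is exactly the argument the paper has in mind: the paper states the proposition as a ``straightforward consequence'' of Theorems~\ref{thm:donaldson-blowup} and \ref{thm:sw-blowup} together with the Fintushel--Stern corollary on simple type, and your write-up simply carries out that bookkeeping, including the factor of $2$ from $\cosh(E\cdot h)=\tfrac12(e^{E\cdot h}+e^{-E\cdot h})$ that yields $c(X)=2c(\tilde X)$. The points you flag (distinctness of the classes $K_r\pm E$, invariance of the sign since $E\cdot w=0$, and the vacuous case) are the right ones to check, and they all go through.
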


%
%

\section{Lefschetz fibrations and basic classes}
\label{sec:lf-basic-classes}

Let $X \to S^2$ be a Lefschetz fibration with generic fiber $\Sigma$ of genus $g \geq 2$.  Then $X$ admits a symplectic form $\omega$ for which each fiber is symplectic \cite{Gompf-Stipsicz}, and so it has a canonical class $K_X \in H^2(X;\zz)$.  Throughout this section we will assume that $b^+(X) > 1$, and when we discuss Donaldson invariants we will also assume that $b_1(X)=0$.

Our goal in this section is to determine some strong restrictions on the set of basic classes satisfying $K\cdot \Sigma = 2g-2$, which is maximal by the adjunction inequality $|K\cdot \Sigma| + \Sigma\cdot\Sigma \leq 2g-2$.  The main result for Seiberg--Witten basic classes, Theorem \ref{thm:sw-fibration}, was claimed by Finashin \cite[Section 3]{Finashin}, but the argument given there is incomplete (see Remark \ref{rem:finashin-proof}) so we include a full proof here following the same ideas.  An analogous result was proved by Ozsv{\'a}th and Szab{\'o} \cite[Theorem 5.1]{OS-symplectic} for their $4$-manifold invariants by different means.

Throughout this section we will abuse notation by identifying surfaces inside $X$ with their homology classes, but this should not cause any confusion.

\begin{lemma}
Suppose that $X$ has reducible singular fibers $F_1\cup G_1,\dots,F_k\cup G_k$ where $F_i$ and $G_i$ are the components of the $i$th reducible singular fiber.  Then $\Sigma$, $F_i$, and $G_i$ all are primitive nonzero elements of $H_2(X;\zz)$, both $F_i$ and $G_i$ have self-intersection $-1$, and $\Sigma,F_1,\dots,F_k$ are linearly independent in $H_2(X;\zz)$.
\end{lemma}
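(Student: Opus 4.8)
The plan is to pin down all the relevant intersection numbers first and then read off nonvanishing, primitivity, and independence from them. Let me write $\Sigma$, $F_i$, $G_i$ also for the homology classes of a generic fiber and of the components of the reducible fibers, and let $\omega$ be a symplectic form on $X$ making every fiber symplectic. I would start from three elementary observations. (i) A generic fiber can be isotoped off any prescribed fiber, and the reducible singular fibers lie over distinct points of $S^2$ and are hence disjoint from one another, so $\Sigma^2 = \Sigma\cdot F_i = \Sigma\cdot G_i = 0$ and $F_i\cdot F_j = F_i\cdot G_j = 0$ for $i\neq j$. (ii) Every fiber of a Lefschetz fibration is homologous to a generic one (they cobound $\pi^{-1}$ of an arc), so $[F_i]+[G_i] = [\Sigma]$. (iii) The components $F_i$ and $G_i$ meet precisely at the node of the $i$th singular fiber, and in the holomorphic local model $z_1z_2 = \epsilon$ near that node this is a single transverse, positively oriented intersection, so $F_i\cdot G_i = 1$.

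Granting (i)--(iii), the rest is essentially forced. The self-intersections drop out immediately: $0 = F_i\cdot\Sigma = F_i\cdot([F_i]+[G_i]) = F_i^2 + 1$, so $F_i^2 = -1$, and symmetrically $G_i^2 = -1$. The class $[\Sigma]$ is nonzero and of infinite order since it is represented by a symplectic surface, so $\langle[\omega],[\Sigma]\rangle > 0$; for its primitivity I would use that $\mathrm{PD}[\Sigma] = \pi^*\mu$ for a generator $\mu$ of $H^2(S^2;\zz)$ together with the fact that a section of $\pi$ meets a generic fiber in one point, so it pairs to $1$ with $[\Sigma]$. Each $[F_i]$ (and $[G_i]$) is nonzero because its square is $-1\neq 0$, and if $[F_i] = m\alpha$ then $-1 = m^2\alpha^2$ forces $m=\pm1$, so $[F_i]$ is primitive. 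Finally, by (i) the Gram matrix of $\Sigma, F_1,\dots,F_k$ against the intersection form is $\mathrm{diag}(0,-1,\dots,-1)$: a relation $c_0[\Sigma] + \sum_i c_i[F_i] = 0$, paired with $[F_j]$, yields $-c_j = 0$ for each $j\geq1$, hence $c_0[\Sigma] = 0$, and then $c_0 = 0$ since $[\Sigma]$ has infinite order.

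The one step that is not mere bookkeeping with the intersection form is the primitivity of $[\Sigma]$: the classes $\Sigma$, $F_i$, $G_i$ all lie in the radical of the sublattice they span, so nothing visible from these classes pairs to $1$ with $[\Sigma]$, and one genuinely needs an auxiliary surface meeting a generic fiber once (a section of the fibration being the natural choice). I would also be careful in (iii) that $F_i\cdot G_i$ comes out $+1$ rather than $-1$, which relies on the fibration being holomorphic near its critical points so that these local intersections are complex-oriented.
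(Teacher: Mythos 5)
Your intersection-number bookkeeping is correct and matches the paper's: $F_i\cdot G_i=1$ at the node (positively oriented by the local holomorphic model), $F_i^2 = F_i\cdot(\Sigma-G_i) = -1$, and linear independence by pairing a relation against each $F_j$ and then using that $[\Sigma]$ has infinite order because it is symplectic. Your primitivity argument for $[F_i]$ (from $F_i^2=-1$) is a harmless variant of the paper's (which uses $F_i\cdot G_i=1$).

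The genuine gap is your argument for the primitivity of $[\Sigma]$. You produce a class pairing to $1$ with $[\Sigma]$ by invoking ``a section of $\pi$,'' but a Lefschetz fibration over $S^2$ need not admit a section: the existence of one is equivalent to lifting the monodromy factorization to the mapping class group of the fiber with a marked point, and this obstruction is nontrivial in general (fibrations without sections exist). None of the hypotheses in force here --- relative minimality, $g\geq 2$, $b^+>1$ --- guarantees a section; indeed the paper only assumes the existence of $(-1)$--sections as an \emph{extra} hypothesis in later statements (Lemma \ref{lem:maximal-class-section}, Proposition \ref{prop:lf-nonminimal-uniqueness}). Note also that knowing $\mathrm{PD}[\Sigma]=\pi^*\mu$ does not help by itself, since $\pi^*\mu$ could still be a divisible class in $H^2(X;\zz)$; and, as you observe, $\Sigma$, the $F_i$, and the $G_i$ all lie in the radical of the sublattice they span, so no intersection-theoretic computation with these classes alone can detect primitivity of $[\Sigma]$. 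The paper closes this step by citing a theorem of Stipsicz that the fiber class of such a fibration is primitive; you need either to quote such a result or to supply an actual proof, rather than assuming a section exists.
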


\begin{proof}
The fact that $F_i$ and $G_i$ are primitive follows immediately from $F_i \cdot G_i = 1$, and Stipsicz \cite[Theorem 1.4]{Stipsicz} showed that $\Sigma$ is primitive as well.  Since $\Sigma$ is homologous to $F_i \cup G_i$ and disjoint from both $F_i$ and $G_i$, we have $F_i \cdot F_i = F_i \cdot (\Sigma - G_i) = -1$ and likewise for $G_i$.  Finally, if there is a linear relation of the form
\[ n\Sigma + c_1F_1 + \dots + c_kF_k = 0, \]
then pairing both sides with $F_i$ gives $-c_i=0$, so we are left with $n\Sigma=0$ and this implies $n=0$ as well.
\end{proof}

\begin{proposition}
\label{prop:lf-basic-class-subspace}
Let $K$ be a basic class of $X$, either for Seiberg--Witten invariants or for Donaldson invariants if $b_1(X)=0$.  If $K\cdot\Sigma = 2g-2$, then the Poincar{\'e} dual of $K_X-K$ can be expressed up to torsion as a sum $c_0 \Sigma + c_1 F_1 + \dots + c_k F_k$ with $c_i\in \zz$ and $c_0 \geq 0$.
\end{proposition}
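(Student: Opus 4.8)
The plan is to locate $L := \mathrm{PD}(K_X-K)\in H_2(X;\zz)/\mathrm{torsion}$ inside the subgroup generated by $\Sigma,F_1,\dots,F_k$ by combining the adjunction inequality with the topology of the fibration, and then to read off the sign of $c_0$ by exhibiting $L$ as an \emph{effective} combination of fiber pieces. For the first reduction, note that the generic fiber $\Sigma$ is a symplectic surface with $\Sigma^2=0$, so $K_X\cdot\Sigma=2g-2$; hence the hypothesis $K\cdot\Sigma=2g-2$ is exactly the statement that $L\cdot\Sigma=0$.

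Next I would pin $L$ down on the reducible fibers. Assume first that the fibration is relatively minimal, so that each reducible fiber $F_i\cup G_i$ has both components of genus at least $1$ (the general case needs only minor changes, treating $-1$-sphere components of fibers via the blow-up formula). Since $F_i,G_i$ are symplectic with $F_i^2=G_i^2=-1$ by the previous lemma, adjunction gives $K_X\cdot F_i=2g(F_i)-1$ and $K_X\cdot G_i=2g(G_i)-1$, while the adjunction inequality for the basic class $K$ — valid for embedded surfaces of genus at least $1$, for Seiberg--Witten and for Donaldson basic classes alike — gives $K\cdot F_i\le2g(F_i)-1$ and $K\cdot G_i\le2g(G_i)-1$. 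Adding these and using $F_i+G_i=\Sigma$ in homology together with $g(F_i)+g(G_i)=g$ and $K\cdot\Sigma=2g-2$ forces both inequalities to be equalities, so $K\cdot F_i=K_X\cdot F_i$ and $K\cdot G_i=K_X\cdot G_i$. Thus $L\cdot F_i=L\cdot G_i=0$, and $L$ is orthogonal to the whole subgroup $\langle\Sigma,F_1,\dots,F_k\rangle$.

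To get both membership in that subgroup and the sign $c_0\ge0$ simultaneously, I would represent $L$ by an embedded surface contained in a union of fibers, using only regular fibers and components of reducible fibers with \emph{nonnegative} multiplicities. Indeed, if such a representative consists of $a_0$ regular fibers together with $p_i$ copies of $F_i$ and $q_i$ copies of $G_i$ with $a_0,p_i,q_i\ge0$, then, since $G_i=\Sigma-F_i$, we get $L=(a_0+\sum q_i)\,\Sigma+\sum(p_i-q_i)\,F_i$, which has the asserted form with $c_0=a_0+\sum q_i\ge0$ and each $c_i=p_i-q_i\in\zz$. To build such a representative I would use $L\cdot\Sigma=0$ to homologize $L$, after subtracting a suitable multiple of $\Sigma$, to a surface disjoint from a fixed regular fiber $\Sigma_\infty$, so that $L$ lies in the image of $H_2$ of the Lefschetz fibration over the disk $S^2\setminus\{\mathrm{pt}\}$, and then identify that image.

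This last step is the main obstacle, and it is exactly where Finashin's argument needs to be completed. The second homology of a Lefschetz fibration over a disk is generated by the components of the reducible fibers \emph{together with} ``matching-cycle'' classes arising from pairs of critical points with isotopic nonseparating vanishing cycles; the latter are represented by Lagrangian $2$-spheres $\delta$ of self-intersection $-2$, on which neither the adjunction inequality nor the blow-up formula gives any control of a basic class. So the crux is to show that $L=\mathrm{PD}(K_X-K)$ has no component along such a $\delta$. For this one must bring in structure beyond the bare adjunction inequality: I would try to combine $K_X\cdot\delta=0$ and $SW(K_X)=\pm1$ with Taubes' inequality $|K\cdot[\omega']|\le K_X\cdot[\omega']$ applied to the various symplectic forms carried by the fibration in the Seiberg--Witten case, and, uniformly, the invariance of the set of basic classes under the generalized Dehn twist along $\delta$ (which acts on $H_2$ by the reflection fixing $\Sigma$ and the $F_j$) together with a bound on how many classes can pair maximally with $\Sigma$. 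Once components along such spheres are excluded, $L$ lies in $\langle\Sigma,F_1,\dots,F_k\rangle$, the effective representative above exists, and the proposition follows.
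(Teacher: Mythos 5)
Your proposal diverges from the paper's argument and, as you yourself acknowledge, leaves the central step unproved. The heart of the proposition is precisely the membership statement $PD(K_X-K)\in\langle\Sigma,F_1,\dots,F_k\rangle$ up to torsion, and your final paragraph only lists candidate strategies for excluding components along matching-cycle classes (``I would try to combine\dots'') without carrying any of them out; none of the tools you name (adjunction, the blow-up formula, invariance under Dehn twists) obviously controls the pairing of a basic class with a Lagrangian $(-2)$--sphere, so this is a genuine gap rather than a routine verification. The paper sidesteps the problem entirely and never analyzes $H_2$ of the fibration over the disk. Instead it exploits the freedom in the Gompf--Thurston construction, where $[\omega]=PD(\Sigma)+t[\eta]$ for \emph{any} integral class $[\eta]$ positive on $\Sigma$ and on each $F_i,G_i$. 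Writing $PD(K_X-K)=\sum_{i=0}^n c_iF_i$ in a basis extending $\Sigma,F_1,\dots,F_k$ with dual basis $\alpha_0,\dots,\alpha_n$, the inequality $(K_X-K)\cdot[\eta]\ge 0$ (from $K\cdot[\omega]\le K_X\cdot[\omega]$ together with $K\cdot\Sigma=K_X\cdot\Sigma$) must hold for every admissible $[\eta]$; perturbing $[\eta]$ by large multiples of $c_{k+1}\alpha_{k+1}+\dots+c_n\alpha_n$, which does not change its evaluation on any fiber component, forces $c_i=0$ for $i>k$, and taking $[\eta]=d\alpha_0+\alpha_1+\dots+\alpha_k$ with $d$ large gives $c_0\ge0$.

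Your second paragraph also contains an unjustified step, and one that would prove too much if it were correct. You apply the adjunction inequality directly to the components $F_i,G_i$, which have self-intersection $-1$; the Donaldson adjunction inequality quoted in the paper requires $S^2>0$, or $S^2=0$ with $[S]$ odd, so it does not apply to $F_i$. (For Seiberg--Witten basic classes one could invoke the Ozsv\'{a}th--Szab\'{o} adjunction inequality for surfaces of negative self-intersection, but no such statement is available on the Donaldson side.) Indeed, the conclusion you draw, $K\cdot F_i=K_X\cdot F_i$ for all $i$, i.e.\ $c_i=0$ for $i\ge1$, is strictly stronger than the proposition and is exactly what the paper can only establish under the additional hypothesis that every fiber component meets a $(-1)$--section (Proposition \ref{prop:lf-nonminimal-uniqueness}, where $F_i$ is first smoothed with such a section to produce a square-zero surface before adjunction is applied). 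So even the portion of your argument preceding the admitted gap needs to be repaired.
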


\begin{proof}
Since $\Sigma$ and the $F_i$ are primitive and linearly independent, we can extend them to a basis $F_0 = \Sigma, F_1, \dots, F_k, F_{k+1},\dots,F_n$ of $H_2(X;\qq)$.  Let $\alpha_0,\dots,\alpha_n$ be the dual basis of $(H_2(X;\qq))^* \cong H^2(X;\qq)$.

Recall (e.g.\ from \cite[Section 10.2]{Gompf-Stipsicz}) that we can give the Lefschetz fibration $f:X\to S^2$ a symplectic form $\omega = f^*(\omega_{S^2}) + t\eta$, where $\omega_{S^2}$ is an area form on the base and $f^*(\omega_{S^2})$ is Poincar{\'e} dual to $\Sigma$; $[\eta]$ is any integral class on $X$ which evaluates positively on every component of every fiber, including the generic fiber $\Sigma$; and $t>0$ is sufficiently small.  More specifically, the condition on $[\eta]$ means that it must be positive on $\Sigma$ and on each $F_i$ and $G_i$ for $1\leq i\leq k$.

By the inequality $K\cdot[\omega] \leq K_X\cdot[\omega]$, which holds for all basic classes in both Donaldson theory \cite{Donaldson-submanifolds} and Seiberg--Witten theory \cite{Taubes-more}, and the fact that $K\cdot \Sigma = 2g-2 = K_X \cdot \Sigma$ (the latter equality follows from the adjunction formula), we have the necessary condition
\[ (K_X-K)\cdot [\eta] \geq 0. \]
Now suppose that $PD(K_X - K) = \sum_{i=0}^n c_i F_i$ for some $c_i\in\qq$.  Then given any symplectic form on $\omega$ as above, we can replace $[\eta]$ with $[\eta'] = [\eta] - d(c_{k+1}\alpha_{k+1} + \dots + c_n \alpha_n)$ without changing its evaluation on $\Sigma$ or any of the $F_i$ or $G_i = \Sigma-F_i$ for $i\leq k$.  In particular, $[\eta']$ is still positive on these classes, so it yields a new symplectic form and we have
\begin{eqnarray*}
0 \leq (K_X-K) \cdot [\eta'] & = & (K_X - K)\cdot[\eta] - d\left(\sum_{i=k+1}^n c_i \alpha_i\right) \left(\sum_{j=0}^n c_j F_j\right) \\
& = & (K_X-K)\cdot[\eta] - d\sum_{i=k+1}^n c_i^2.
\end{eqnarray*}
Taking $d$ to be arbitrarily large forces the right hand side to be negative unless $\sum_{i=k+1}^n c_i^2 = 0$, so we must have $c_i=0$ for all $i>k$, as desired.

Finally, we note that $[\eta] = d\alpha_0 + \alpha_1 + \dots + \alpha_k$ gives rise to a symplectic form for all $d\geq 2$, and $(K_X-K)[\eta] = c_0d + c_1 + \dots + c_k$ must remain nonnegative for all large $d$, so $c_0 \geq 0$.  Since $\Sigma$ and the $F_i$ are primitive, linearly independent (over $\qq$) integral classes and $K_X-K$ is integral, it also follows that the $c_i$ must all actually be integral as well.
\end{proof}

\begin{remark}
\label{rem:finashin-proof}
The proof of Proposition \ref{prop:lf-basic-class-subspace} was suggested by Finashin in \cite[A3(2)]{Finashin}.  However, the proof there claimed that $[\eta]$ could be any class for which $[\eta]\cdot\Sigma > 0$, and this is only true if there are no reducible singular fibers.  Otherwise we cannot a priori eliminate the possibility that $PD(K_X-K)$ has nontrivial $F_i$--components for some of $F_1,\dots,F_k$.
\end{remark}

\begin{proposition}
Suppose that $X\to S^2$ is relatively minimal and $K$ is a Seiberg--Witten or Donaldson basic class for which $K\cdot \Sigma = 2g-2$.  Then $K^2 \leq K_X^2$, with equality if and only if $K = K_X$ modulo torsion.
\end{proposition}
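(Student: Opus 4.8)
The plan is to combine Proposition~\ref{prop:lf-basic-class-subspace}, which writes $PD(K_X - K) = c_0\Sigma + c_1 F_1 + \dots + c_k F_k$ with $c_0 \geq 0$ and $c_i \in \zz$, with the adjunction-type inequality $K \cdot [\omega] \leq K_X \cdot [\omega]$ for a carefully chosen family of symplectic forms. Write $D = PD(K_X - K) = c_0\Sigma + \sum_{i=1}^k c_i F_i$, so that $K^2 = (K_X - D)^2 = K_X^2 - 2 K_X \cdot D + D^2$. To prove $K^2 \leq K_X^2$ it therefore suffices to show that $2 K_X \cdot D \geq D^2$, i.e.\ that $D \cdot (2K_X - D) = D \cdot (K_X + K) \geq 0$, with equality only when $D = 0$ modulo torsion.

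First I would compute the intersection numbers of the classes $\Sigma, F_1, \dots, F_k$ among themselves. By the Lemma, $\Sigma^2 = 0$, $F_i^2 = -1$, and $\Sigma \cdot F_i = \Sigma\cdot(F_i \cup G_i) - \Sigma \cdot G_i$; since $\Sigma$ is homologous to the reducible fiber $F_i \cup G_i$ which is disjoint from it, $\Sigma \cdot F_i = 0$, and similarly $F_i \cdot F_j = 0$ for $i \neq j$ since distinct singular fibers are disjoint. Thus $\Sigma, F_1, \dots, F_k$ span a sublattice on which the intersection form is diagonal $\mathrm{diag}(0, -1, \dots, -1)$, and in particular $D^2 = -\sum_{i=1}^k c_i^2 \leq 0$. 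Next, I would pair $D$ with $K_X$: by the adjunction formula $K_X \cdot \Sigma = 2g - 2 > 0$, and $K_X \cdot F_i = -1$ since $F_i$ is a $(-1)$--sphere (relative minimality guarantees no fiber component is such a sphere that is \emph{not} part of a reducible fiber, but the components $F_i$ of reducible fibers themselves are embedded spheres of square $-1$ meeting $G_i = \Sigma - F_i$, and genus reasons force $K_X\cdot F_i = -1$ via $K_X\cdot F_i + F_i^2 = 2g(F_i) - 2 = -2$). Hence $K_X \cdot D = c_0(2g-2) - \sum_{i=1}^k c_i$. Combining, $2K_X\cdot D - D^2 = 2c_0(2g-2) - 2\sum c_i + \sum c_i^2 = 2c_0(2g-2) + \sum_{i=1}^k (c_i^2 - 2c_i) = 2c_0(2g-2) + \sum_{i=1}^k (c_i-1)^2 - k$.

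The sign of the quantity $2c_0(2g-2) + \sum(c_i-1)^2 - k$ is not obviously nonnegative term-by-term, so the remaining work — and the main obstacle — is to rule out the bad configurations where it could be negative, i.e.\ where many $c_i$ equal $1$ while $c_0 = 0$. To handle this I would exploit that we are free to run the argument of Proposition~\ref{prop:lf-basic-class-subspace} with $K$ replaced by $-K$: since basic classes come in pairs $\pm K$, the class $-K$ is also basic with $(-K)\cdot\Sigma$ equal to $-(2g-2)$, which is \emph{not} $2g-2$, so this naive symmetry does not directly apply; instead I would apply the inequality $K \cdot [\omega] \leq K_X \cdot [\omega]$ directly to the perturbed forms $[\eta'] = d\alpha_0 + \sum_{i=1}^k \epsilon_i \alpha_i$ with $\epsilon_i > 0$ small and $d$ large, which are admissible by the discussion in the previous proof, to get $(K_X - K)\cdot[\eta'] = c_0 d + \sum \epsilon_i c_i \geq 0$ for all such choices; letting the $\epsilon_i$ vary freely over small positives (while keeping $d$ fixed once $c_0 \geq 1$, and letting $d \to 0^+$ when $c_0 = 0$ is not allowed, so instead one keeps $d \geq 2$) forces each $c_i \geq 0$ when $c_0 = 0$. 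More usefully, I would bound things by working with $G_i = \Sigma - F_i$ in place of $F_i$: since $\eta$ must also be positive on each $G_i$, replacing the generator $F_i$ by $G_i$ in the basis and re-running the estimate shows symmetrically that the coefficient of $G_i$, namely $c_0 - c_i$ (rewriting $c_i F_i = -c_i G_i + c_i \Sigma$), satisfies an analogous nonnegativity, giving $c_i \leq c_0$. Putting $0 \leq c_i \leq c_0$ together with the expression $2c_0(2g-2) + \sum_{i=1}^k(c_i-1)^2 - k$: if $c_0 = 0$ then all $c_i = 0$ and $D = 0$, giving equality; if $c_0 \geq 1$ then $2c_0(2g-2) \geq 2(2g-2) \geq 4$ and each $(c_i - 1)^2 \geq 0$ while $k$ is bounded — here I would instead note that when $c_0 \geq 1$ we get $2 K_X\cdot D - D^2 \geq 2c_0(2g - 2) - \sum_{i=1}^k c_i(2 - c_i)$, and $c_i(2-c_i) \leq 1$ only helps when $c_i \in \{0,1,2\}$; for the finitely many remaining genuinely problematic cases one appeals to the sharper input that for \emph{relatively minimal} fibrations the number $k$ of reducible fibers and the constraint $K_X^2 = c_1(X)^2$ being a topological invariant pin down the relation, but cleanly: since $c_i \leq c_0$, we have $\sum c_i(2-c_i) = \sum(1 - (c_i-1)^2) \leq k$, and separately $c_0 \geq 1$ with $c_i \leq c_0$ forces $\sum c_i^2 \leq c_0 \sum c_i \leq c_0^2 k$, which is not immediately enough — so the honest main obstacle is exactly this: showing $2c_0(2g-2) \geq \sum_{i=1}^k c_i(2 - c_i)$ strictly unless $D = 0$, which I expect to require invoking that $PD(K_X - K)$, being an effective-type combination forced by \emph{two-sided} positivity of $\eta$ on both $F_i$ and $G_i$ and on $\Sigma$, must actually have all $c_i = 0$ whenever $c_0 = 0$ (handled above) and, when $c_0 \geq 1$, bounding $\sum c_i \leq c_0 k$ gives $2K_X\cdot D - D^2 \geq 2c_0(2g-2) - 2c_0 k + \sum c_i^2 > 0$ provided $g - 1 \geq k$, with the case $g - 1 < k$ dispatched by the genus-additivity of Lefschetz fibrations (each reducible fiber splits the genus, so $k \leq g - 1$ automatically). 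This last observation — that relative minimality plus the combinatorics of the vanishing cycles forces $k \leq g-1$ — is the clean fact I would want to isolate and is where the real content lies; granting it, equality in $K^2 \leq K_X^2$ forces $\sum c_i^2 = 0$ and $c_0 = 0$, hence $D = 0$ and $K = K_X$ modulo torsion.
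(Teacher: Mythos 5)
Your setup is the same as the paper's (decompose $PD(K_X-K)=c_0\Sigma+\sum c_iF_i$ via Proposition \ref{prop:lf-basic-class-subspace}, compute $K^2-K_X^2$ using the diagonal intersection matrix $\mathrm{diag}(0,-1,\dots,-1)$), but there is a concrete error at the decisive step: you take $K_X\cdot F_i=-1$ on the grounds that $F_i$ is a $(-1)$--sphere, i.e.\ you set $g(F_i)=0$ in the adjunction formula. That is exactly what relative minimality \emph{forbids}: no component of any fiber is a sphere of square $-1$, so $g(F_i)\geq 1$ and hence $K_X\cdot F_i=2g(F_i)-2-F_i^2=2g(F_i)-1\geq 1>0$. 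This sign flip is what derails the rest of your argument. With the correct value, once one normalizes so that all $c_i\geq 0$ (if $c_i<0$, write $c_iF_i=c_i\Sigma+(-c_i)G_i$ and replace $F_i$ by $G_i$ in the basis; the coefficient of $\Sigma$ remains nonnegative by the argument of Proposition \ref{prop:lf-basic-class-subspace}), one gets
\[ K_X^2-K^2 \;=\; 2c_0(K_X\cdot\Sigma)+2\sum_{i=1}^k c_i(K_X\cdot F_i)+\sum_{i=1}^k c_i^2, \]
a sum of manifestly nonnegative terms that vanishes only when $c_0=c_1=\dots=c_k=0$, which is the paper's proof and finishes in two lines.

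Because of the wrong sign you are forced into the auxiliary claims in your last paragraph, and these do not hold up: the bound $c_i\leq c_0$ does not follow from the perturbation argument you sketch (positivity of $[\eta]$ on $F_i$ and $G_i$ only constrains $\epsilon_i$ to lie in $(0,d)$, which yields $c_i\geq -c_0$ and nothing stronger in general), and the assertion that relative minimality forces $k\leq g-1$ is false --- a relatively minimal Lefschetz fibration can have arbitrarily many reducible fibers realizing the same genus splitting. You also acknowledge yourself that the inequality $2c_0(2g-2)\geq\sum c_i(2-c_i)$ is left unproved. So the proposal as written has a genuine gap; the fix is simply to use relative minimality where it actually enters, namely to conclude $K_X\cdot F_i>0$ rather than $K_X\cdot F_i<0$.
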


\begin{proof}
Let $K$ be such a basic class, and write $PD(K_X-K) = n\Sigma + \sum_{i=1}^k c_i F_i$ up to torsion.  If $c_i < 0$ for any $i$, then we observe that $c_i F_i = c_i \Sigma + (-c_i) G_i$, so by replacing $F_i$ in our basis with $G_i$ we may assume without loss of generality that $c_i \geq 0$ for all $i$, and we will still have $n\geq 0$ as in the previous proposition.
We then compute 
\begin{eqnarray*}
K^2 & = & \left(PD(K_X) - n\Sigma - \sum_{i=1}^k c_i F_i\right)^2 \\
& = & K_X^2 - 2n(K_X \cdot \Sigma) - 2\sum_{i=1}^k c_i(K_X \cdot F_i) - \sum_{i=1}^k c_i^2.
\end{eqnarray*}
Now the adjunction formula says that $K_X \cdot S + S\cdot S = 2g(S)-2$ whenever $S$ is a symplectic surface, and by construction $\Sigma$ and each of the $F_i$ are symplectic, so
\begin{eqnarray*}
K_X \cdot \Sigma = 2g-2, & & K_X \cdot F_i = 2g(F_i)-1
\end{eqnarray*}
since $\Sigma$ and $F_i$ have self-intersection $0$ and $-1$ respectively.  In particular, we assumed that $g\geq 2$ and $X\to S^2$ is relatively minimal, which implies $g(F_i) \geq 1$ for each $i$, so both $K_X\cdot\Sigma$ and $K_X\cdot F_i$ are strictly positive.  But this implies that
\[ K_X^2 - K^2 = 2n(K_X \cdot \Sigma) + 2\sum_{i=1}^k c_i(K_X \cdot F_i) + \sum_{i=1}^k c_i^2 \]
is a sum of nonnegative terms which vanish only if $n$ and the $c_i$ are all zero, as desired.
\end{proof}

We remark that nothing we have proved so far in this section requires $X$ to have Donaldson simple type.  Indeed, the only facts we have used about basic classes are the adjunction inequality, which still holds when $X$ has finite type, and the inequality $K\cdot[\omega] \leq K_X\cdot[\omega]$, which follows from the adjunction inequality and the existence of symplectic surfaces dual to $k[\omega]$ for large $k$.

At this point we specialize to Seiberg--Witten theory, where we can apply several theorems of Taubes \cite{Taubes-symplectic, Taubes-more} to completely determine which basic classes evaluate maximally on the fiber $\Sigma$.

\begin{theorem}
\label{thm:sw-fibration}
Let $X\to S^2$ be a relatively minimal Lefschetz fibration of genus $g\geq 2$ satisfying $b^+(X)>1$.  Then the canonical class $K_X$ is the unique Seiberg--Witten basic class $K$ of $X$ for which $K\cdot \Sigma=2g-2$.
\end{theorem}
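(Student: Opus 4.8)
The plan is to combine the two preceding propositions with standard facts from Seiberg--Witten theory. Let $K$ be a Seiberg--Witten basic class with $K\cdot\Sigma = 2g-2$. Since $X\to S^2$ is relatively minimal, the preceding proposition applies and gives $K^2 \leq K_X^2$, with equality if and only if $K = K_X$ modulo torsion (recall that $\mathrm{PD}(K_X-K)$ is, modulo torsion and after possibly replacing some $F_i$ by $G_i$, of the form $n\Sigma + \sum_{i=1}^k c_iF_i$ with $n\geq 0$ and all $c_i\geq 0$). So it suffices to establish the reverse inequality $K^2 \geq K_X^2$, which forces equality and hence $K = K_X$ modulo torsion, and then to remove the possible torsion ambiguity.

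For the reverse inequality, suppose $\mathfrak{s}$ is a $\mathrm{Spin}^c$ structure with $c_1(\mathfrak{s}) = K$ and $SW_X(\mathfrak{s}) \neq 0$. Then the Seiberg--Witten moduli space of $\mathfrak{s}$ is nonempty for a generic perturbation, so its expected dimension $d(\mathfrak{s}) = \tfrac14\bigl(K^2 - 2\chi(X) - 3\sigma(X)\bigr)$ is nonnegative. Since $X$ is symplectic, hence almost complex, $2\chi(X) + 3\sigma(X) = c_1(X)^2 = K_X^2$, so $K^2 \geq K_X^2$. This step uses neither simple type nor $b_1(X)=0$, so it holds in the generality of the theorem; combined with the preceding proposition it gives $K^2 = K_X^2$ and therefore $K = K_X$ modulo torsion.

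To upgrade this to honest equality of classes I would invoke Taubes's sharp inequality \cite{Taubes-more}. Equip $X$ with the Lefschetz fibration symplectic form $\omega = f^*\omega_{S^2} + t\eta$, with $t>0$ small enough that $K_X\cdot[\omega] = (2g-2) + t\,(K_X\cdot[\eta]) > 0$; this is possible since the left-hand side tends to $2g-2 > 0$ as $t\to 0^+$. Because $K_X - K$ is a torsion class it pairs trivially with $[\omega]$, so $K\cdot[\omega] = K_X\cdot[\omega] > 0$. But Taubes proved that every Seiberg--Witten basic class $K'$ of a symplectic $4$-manifold with $b^+ > 1$ satisfies $|K'\cdot[\omega]| \leq K_X\cdot[\omega]$, with equality only for $K' = \pm K_X$; since $K\cdot[\omega]$ attains the positive maximum $K_X\cdot[\omega]$, we conclude $K = K_X$.

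The genuinely delicate point in this circle of ideas has already been handled, namely the control in Proposition~\ref{prop:lf-basic-class-subspace} and the proposition following it on the possible $F_i$--components of $\mathrm{PD}(K_X-K)$ coming from reducible singular fibers (cf.\ Remark~\ref{rem:finashin-proof}); granting those results, the only substantive new inputs are the dimension inequality $K^2\geq K_X^2$ and Taubes's inequality, and the only mild subtlety is observing that Taubes's equality case yields equality of cohomology classes on the nose rather than merely modulo torsion, which is exactly what lets us improve $K\equiv K_X$ to $K = K_X$.
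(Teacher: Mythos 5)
Your proposal is correct and follows essentially the same route as the paper: the preceding proposition gives $K^2 \leq K_X^2$ with equality only for $K = K_X$ modulo torsion, the nonnegativity of the expected dimension of the monopole moduli space gives the reverse inequality $K^2 \geq K_X^2$, and Taubes's sharp bound $|K\cdot[\omega]| \leq K_X\cdot[\omega]$ with its equality case upgrades $K \equiv K_X$ (mod torsion) to $K = K_X$. The only cosmetic difference is how the possibility $K = -K_X$ is excluded (you use the sign of $K\cdot[\omega]$, the paper notes that $2K_X$ cannot be torsion since $K_X\cdot\Sigma = 2g-2 > 0$), which is immaterial.
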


\begin{proof}
Let $K$ be a Seiberg--Witten basic class with $K\cdot\Sigma=2g-2$, and note that $K_X$ is one such class \cite{Taubes-symplectic}.  We cannot have $K^2 < K_X^2 = 3\sigma(X) + 2\chi(X)$, because then the associated moduli space of monopoles has negative expected dimension, which makes it generically empty and so $K$ would not be a basic class.  (In fact, since $X$ has Seiberg--Witten simple type \cite{Taubes-SWtoGR}, every basic class satisfies $K^2 = K_X^2$.)  Therefore $K_X - K$ is torsion, which implies that $K_X \cdot [\omega] = K \cdot [\omega]$, hence $K=\pm K_X$ by \cite{Taubes-more}. Clearly we cannot have $K=-K_X$, because the fact that $K_X\cdot\Sigma = 2g-2 > 0$ would contradict the claim that $K_X - K = 2K_X$ is torsion, so it follows that $K=K_X$.
\end{proof}

The same argument does not show that a Donaldson basic class $K$ of this form must be the canonical class, because we do not know (although we expect it to be true) that all Donaldson basic classes satisfy $K^2 \geq K_X^2$.  However, in certain situations we can still reach the same conclusion.

\begin{lemma}
\label{lem:maximal-class-section}
Let $X\to S^2$ be a relatively minimal genus $g\geq 2$ Lefschetz fibration with $b_1=0$, $b^+ > 1$,  and generic fiber $\Sigma$, not necessarily of Donaldson simple type, and let $K$ be a basic class of $X$ such that $K\cdot \Sigma = 2g-2$.  Then $K\cdot E = -1$ for any section $E$ of square $-1$.
\end{lemma}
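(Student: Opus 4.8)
The plan is to blow down the section $E$ and reduce the statement to the adjunction inequality on the resulting manifold, where the image of the fiber will have positive self-intersection.

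First, I would note that since $E$ is a section of $X\to S^2$ it is a smoothly embedded $2$-sphere with $E\cdot E=-1$ meeting each fiber transversally in a single point; in particular $E\cdot\Sigma=1$. Blowing it down writes $X=X_0\#\cpbar$ with $E$ the class of the exceptional sphere, and $b_1(X_0)=0$, $b^+(X_0)=b^+(X)>1$. A generic fiber $\Sigma$ meets $E$ transversally in one point, so it is the proper transform of a smoothly embedded surface $\overline\Sigma\subset X_0$ of the same genus $g$; thus $\overline\Sigma=\Sigma+E$ in $H_2(X;\zz)$, and $\overline\Sigma\cdot\overline\Sigma=\Sigma\cdot\Sigma+2\,\Sigma\cdot E+E\cdot E=0+2-1=1>0$. (Note that $K_X$ itself satisfies $K_X\cdot E=-1$, by the adjunction formula for the symplectic sphere $E$ of square $-1$; the point is to prove this for every basic class with $K\cdot\Sigma=2g-2$.)

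Next I would invoke the blowup formula for basic classes. By Theorem \ref{thm:donaldson-blowup}, together with the fact recorded after Proposition \ref{prop:witten-blowdown} that simple type descends under blowdown, if $X$ has simple type then so does $X_0$ and every basic class $K$ of $X$ has the form $K=K_0+\epsilon E$ for a basic class $K_0$ of $X_0$ and some $\epsilon\in\{+1,-1\}$; in general one appeals to the finite-type analogue of Theorem \ref{thm:donaldson-blowup}, which gives the same conclusion with the additional information $d(K)=d(K_0)$. Using $K_0\cdot E=0$ and $E\cdot\Sigma=1$, I would then compute
\[ K_0\cdot\overline\Sigma=K_0\cdot\Sigma=(K-\epsilon E)\cdot\Sigma=K\cdot\Sigma-\epsilon=2g-2-\epsilon, \]
invoking the hypothesis $K\cdot\Sigma=2g-2$ at the last step. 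On the other hand $\overline\Sigma$ is an embedded surface of genus $g\geq 2$ with $\overline\Sigma\cdot\overline\Sigma=1>0$, so the adjunction inequality in $X_0$ — in the finite-type form $|K_0\cdot\overline\Sigma|+\overline\Sigma\cdot\overline\Sigma+2d(K_0)\leq 2g-2$ — gives $|K_0\cdot\overline\Sigma|\leq 2g-3$. Combining these, $|2g-2-\epsilon|\leq 2g-3$, which fails when $\epsilon=-1$; hence $\epsilon=+1$, and therefore $K\cdot E=(K_0+E)\cdot E=E\cdot E=-1$, as desired.

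The intersection-number bookkeeping is routine, and in the simple-type case every ingredient used above already appears in the excerpt, so I expect no obstacle there. The one genuinely non-formal input is the version of the blowup formula needed when $X$ has only finite type: namely that blowing down preserves finite type and that the basic classes of $X=X_0\#\cpbar$ are exactly $\{K_0\pm E\}$ with the order of finite type unchanged. This is the expected finite-type analogue of Theorem \ref{thm:donaldson-blowup}; once it is granted the argument goes through unchanged.
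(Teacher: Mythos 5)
Your overall strategy --- blow down $E$, pass to the surface $\overline\Sigma=\Sigma+E$ of square $1$ in $X_0$, and use the adjunction inequality there --- is exactly the strategy of the paper's proof. In the simple type case your argument is complete. But in the finite-type case, which is the whole point of the lemma, the ``expected finite-type analogue of Theorem \ref{thm:donaldson-blowup}'' that you grant yourself is not the correct statement, and this is a genuine gap. The actual blowup formula for manifolds of finite type, due to Mu\~noz \cite[Remark 3]{Munoz-donaldson-nonsimple} and quoted in the paper's proof, says that a basic class of $X=X_0\#\cpbar$ has the form $K=L\pm(2n+1)E$ for a basic class $L$ of $X_0$ and some integer $n\geq 0$, with $d(K)=d(L)-n(n+1)$; the coefficient of $E$ can be any odd integer, and the order of finite type is \emph{not} preserved. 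So your dichotomy $\epsilon\in\{+1,-1\}$ omits all the cases $n>0$, and the computation $|2g-2-\epsilon|\leq 2g-3$ does not by itself rule them out.

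The fix is exactly the extra step the paper takes. First, the adjunction inequality applied to $\Sigma$ itself (with $\Sigma^2=0$ and $K\cdot\Sigma=2g-2$) forces $d(K)=0$, hence $d(L)=n(n+1)$. Writing $K=L+m E$ with $m=\pm(2n+1)$, your computation gives $L\cdot\overline\Sigma=2g-2-m$, and now the finite-type adjunction inequality in $X_0$ reads $|2g-2-m|+1+2n(n+1)\leq 2g-2$. The case $m=-(2n+1)$ is impossible, and the case $m=2n+1$ reduces to $2n^2\leq 0$, so $n=0$ and $K\cdot E=-1$. So your argument does go through, but only after inserting the $2d(L)$ term that your version of the blowup formula suppressed; as written, the step ``once it is granted the argument goes through unchanged'' rests on a false premise.
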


\begin{proof}
The class $\Sigma+E$ is represented by a genus $g$ surface of square $\Sigma^2 + 1 = 1$ obtained by smoothing the point of intersection of $\Sigma\cup E$, and so $K\cdot (\Sigma+E) + 1 \leq 2g-2$ by the adjunction inequality.  Since $K\cdot\Sigma = 2g-2$ by assumption, we see that $K\cdot E \leq -1$.  

We now use a fact proved by Mu{\~n}oz \cite[Remark 3]{Munoz-donaldson-nonsimple} about blow-ups: if $X_0$ is the manifold obtained by blowing down $X$ along $E$, then $K$ has the form $K = L \pm (2n+1)E$ where $L$ is a basic class of $X_0$ and $n\geq 0$, and furthermore $d(K) = d(L)-n(n+1)$.  Since $K\cdot E \leq -1$, we actually have $K = L+(2n+1)E$.  In fact, since $K\cdot \Sigma + \Sigma\cdot \Sigma = 2g-2$ the adjunction inequality \cite[Theorem 1.7]{Munoz-higher} says that $d(K) = 0$, hence $d(L) = n(n+1)$.  If $\Sigma_0 \subset X_0$ is the surface with proper transform $\Sigma\subset X$, so that $\Sigma_0^2 = 1$ and $\Sigma = \Sigma_0 - E$, then
\[ 2g-2 = K\cdot \Sigma = (L+(2n+1)E)\cdot(\Sigma_0 - E) = L\cdot\Sigma_0 + (2n+1) \]
and so the adjunction inequality applied to $L$ and $\Sigma_0$ gives
\[ 2g-2 \geq L\cdot \Sigma_0 + \Sigma_0\cdot\Sigma_0 + 2d(L) = (2g-2 - (2n+1)) + 1 + 2n(n+1) \]
or equivalently $2n^2 \leq 0$.  We conclude that $n=0$, and so $K = L+E$ and $K\cdot E = -1$.
\end{proof}

\begin{proposition}
\label{prop:lf-nonminimal-uniqueness}
Let $X\to S^2$ be a relatively minimal genus $g\geq 2$ Lefschetz fibration for which every component of every fiber intersects at least one section of square $-1$, and suppose that $b_1(X)=0$ and $b^+(X) > 1$. Then any Donaldson basic class $K$ of $X$ such that $K \cdot \Sigma = 2g-2$ must equal $K_X$ up to torsion.
\end{proposition}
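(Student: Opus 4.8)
The plan is to deduce the statement directly from Lemma~\ref{lem:maximal-class-section} and Proposition~\ref{prop:lf-basic-class-subspace}. First I would record that any section $E$ of square $-1$ is an embedded sphere, so the adjunction formula gives $K_X\cdot E = 2g(E)-2-E^2 = -2-(-1) = -1$; combined with Lemma~\ref{lem:maximal-class-section}, which says $K\cdot E=-1$, this yields $(K_X-K)\cdot E = 0$ for every such section $E$.

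Next, Proposition~\ref{prop:lf-basic-class-subspace} lets me write $PD(K_X-K) = c_0\Sigma + \sum_{i=1}^k c_iF_i$ up to torsion with $c_0\geq 0$ and $c_i\in\zz$. Applying the same normalization used in the proof of the proposition asserting $K^2\leq K_X^2$ — replacing $F_i$ by $G_i=\Sigma-F_i$ for each $i$ with $c_i<0$, and re-running the varying-$[\eta]$ argument of Proposition~\ref{prop:lf-basic-class-subspace} in the resulting basis to keep the coefficient of $\Sigma$ nonnegative — I may assume $PD(K_X-K) = n\Sigma + \sum_{i=1}^k c_iH_i$ with each $H_i\in\{F_i,G_i\}$, all $c_i\geq 0$, and $n\geq 0$. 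The key geometric input is that a section $E$, being a section, meets the reducible fiber $F_i\cup G_i$ over its critical value in a single point, which can be taken to lie on exactly one of the two components (sections avoid the nodes); hence $F_i\cdot E,\ G_i\cdot E\in\{0,1\}$ and $F_i\cdot E + G_i\cdot E = \Sigma\cdot E = 1$, so in particular $H_i\cdot E\geq 0$ for every $(-1)$-section $E$.

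Now for any section $E$ of square $-1$ I compute $0 = (K_X-K)\cdot E = n(\Sigma\cdot E) + \sum_i c_i(H_i\cdot E) = n + \sum_i c_i(H_i\cdot E)$, which is a sum of nonnegative terms; hence $n=0$ and $c_i(H_i\cdot E)=0$ for every $i$ and every such $E$. Finally I invoke the hypothesis: each $H_i$ is a component of a fiber, so it intersects some section $E$ of square $-1$, and for that $E$ we have $H_i\cdot E=1$, forcing $c_i=0$. Thus $PD(K_X-K)$ is torsion and $K=K_X$ up to torsion, as claimed.

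I expect the only delicate points to be bookkeeping rather than substance: first, checking that the ``$c_i\geq 0$'' normalization borrowed from the earlier proposition is compatible with the conclusion of Proposition~\ref{prop:lf-basic-class-subspace} — in particular that $n\geq 0$ survives the change of basis, which holds because one can still produce a symplectic form from $[\eta]=d\alpha_0+\alpha_1+\dots+\alpha_k$ in the new dual basis for all $d\geq 2$; and second, justifying that ``a component of a fiber intersects a $(-1)$-section'' really forces the homological intersection number to be exactly $1$, i.e.\ that sections may be chosen to miss the nodal points and to meet each component with multiplicity $0$ or $1$.
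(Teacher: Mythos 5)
Your argument is correct, and it reaches the conclusion by a genuinely different mechanism than the paper's proof, although both rest on the same two inputs (Proposition~\ref{prop:lf-basic-class-subspace} and Lemma~\ref{lem:maximal-class-section}). The paper kills the coefficients $c_i$ by smoothing $F_i\cup E$ into a square-zero surface of genus $g(F_i)$ and applying the adjunction inequality; together with $K\cdot\Sigma = K_X\cdot\Sigma$ this forces $K\cdot F_i = K_X\cdot F_i$, hence $c_i=0$ upon pairing with $-F_i$, and only afterwards does it pair with a section to kill $n$. You instead normalize all coefficients to be nonnegative (the same $F_i\leftrightarrow G_i$ swap used in the proof that $K^2\leq K_X^2$, re-running the $[\eta]$ argument to keep the $\Sigma$-coefficient nonnegative) and then extract everything from the single identity $(K_X-K)\cdot E=0$ together with the positivity $H_i\cdot E\in\{0,1\}$; the hypothesis that every component meets some $(-1)$-section enters exactly where it should, to furnish an $E$ with $H_i\cdot E=1$. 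Your route avoids the smoothing/adjunction-inequality step entirely; the cost is the normalization, which the paper's version of the argument does not need. Two small points you should make explicit: first, $K_X\cdot E=-1$ is not the adjunction formula for an arbitrary embedded sphere --- you should first make $E$ symplectic by shrinking the parameter $t$ in the Gompf--Thurston form, exactly as the paper does, before invoking adjunction; second, your claim that a section meets a reducible fiber in a single positive transverse point lying on exactly one component is justified because $f\circ s=\mathrm{id}$ forces $df$ to be surjective along the image of $s$, so sections automatically avoid the critical points and meet each fiber transversally once.
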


\begin{proof}
Let $F_i\cup G_i$ be a reducible singular fiber, and let $E$ be a $(-1)$--section intersecting $F_i$.  Make the section $E$ symplectic by taking the parameter $t$ in the proof of Proposition \ref{prop:lf-basic-class-subspace} sufficiently small.  Then the class $F_i+E$ is represented by a genus $g(F_i)$ surface of square $F_i^2 + 1 = 0$ obtained by smoothing $F_i\cup E$, and so $K\cdot (F_i+E) \leq 2g(F_i)-2$ by the adjunction inequality.  Since both $F_i$ and $E$ are symplectic, the adjunction formula says that $K_X\cdot F_i = 2g(F_i)-1$ and $K_X \cdot E = -1$, and so $K\cdot(F_i+E) \leq K_X\cdot(F_i+E)$.  Therefore $K\cdot F_i \leq K_X \cdot F_i$, and likewise we can show that $K\cdot G_i \leq K_X \cdot G_i$.  Combining the two inequalities, we get
\[ K\cdot\Sigma = K\cdot F_i + K\cdot G_i \leq K_X\cdot F_i + K_X\cdot G_i = K_X\cdot \Sigma, \]
and by assumption the left and right sides are equal, so in fact $K\cdot F_i = K_X\cdot F_i$.  Pairing both sides of $PD(K_X-K) = n\Sigma + \sum c_j F_j$ with $-F_i$, we see that $c_i = 0$.

Finally, we pair both sides of $PD(K_X - K) = n\Sigma$ with a symplectic $(-1)$--section $E$ to get $n = n\Sigma\cdot E = (K_X - K)\cdot E = 0$, and so we conclude that $K_X = K$ as desired.
\end{proof}

\begin{remark}
We have not actually shown in the proof of Proposition \ref{prop:lf-nonminimal-uniqueness} that $K_X$ is a Donaldson basic class, only that $K\cdot \Sigma = 2g-2$ implies $K=K_X$ under the given hypotheses.  However, we will establish that $K_X$ is a basic class for $X$ of simple type during the proof of Theorem \ref{thm:nonvanishing}, and for general $X$ in Corollary \ref{cor:nonvanishing-nonsimple}.
\end{remark}

%
%

\section{A fiber sum which satisfies Witten's conjecture}
\label{sec:fiber-sum}

Let $X\to S^{2}$ be a symplectic Lefschetz fibration with fiber $\Sigma$ of genus $g\geq\ming$, and let $X^{\circ}=X\backslash N(\Sigma)$ be the complement of a neighborhood of a regular fiber. Our goal is to prove the following, cf.\ \cite[Proposition 15]{KM-witten}:
\begin{theorem}
\label{thm:symplectic-embedding}
We can find a symplectic, relatively minimal Lefschetz fibration $Z \to S^2$ with fiber genus $g$ and $b^+(Z) > 1$ so that the fiber sum $W=X\#_{\Sigma}Z$ satisfies $H_{1}(W;\zz)=0$, the restriction map $H^{2}(W;\zz)\to H^{2}(X^{\circ};\zz)$ is surjective, and: 
\begin{enumerate}
\item There is a smooth hypersurface $W_* \subset \mathbb{CP}^3$ of even degree at least $6$ such that $b^-(W) < b^-(W_*)$ and $b^+(W) = b^+(W_*)$.
\item $Z$ contains a tight genus 2 surface disjoint from a generic fiber, hence so does $W$.
\end{enumerate}
\end{theorem}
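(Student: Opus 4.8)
The plan is to realise $Z$ as an iterated fiber sum over $S^2$ of three relatively minimal genus-$g$ Lefschetz fibrations: a single ``tight'' piece $Z'$ carrying the genus-$2$ surface, and two ``spacer'' pieces $Y_0,Y_1$ whose only job is to raise $b^+$ to the right value without letting $\chi$ grow too fast. For $Z'$ I would take a projective K3 surface with a primitive ample class $\Sigma_0$ of square $2g-2$ and blow up the $2g-2$ base points of the associated Lefschetz pencil, obtaining $Z'\cong K3\#(2g-2)\cpbar$; it is relatively minimal (the exceptional curves are $(-1)$-sections, not contained in fibers) and simply connected, with $b^+(Z')=3$ and $\chi(Z')=2g+22$. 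For the spacers I would take relatively minimal genus-$g$ Lefschetz fibrations $Y_0,Y_1$ with $b^+>1$, with vanishing cycles generating $H_1(\Sigma;\zz)$, and with $b^-(Y_\ell)\le 2b^+(Y_\ell)+2g-1$ — hyperelliptic fibrations arising as double branched covers of $\cc P^1\times\cc P^1$ satisfy the last inequality, which is moreover preserved under fiber sum, so we are free to take $b^+(Y_\ell)$ as large as we wish; we arrange in addition that the integers $s_\ell:=b^+(Y_\ell)+2g-1$ are coprime. Finally set $Z=Z'\#_\Sigma(a\text{ copies of }Y_0)\#_\Sigma(b\text{ copies of }Y_1)$, all fiber sums along generic fibers, and $W=X\#_\Sigma Z$.

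With this setup, most of the assertions are essentially formal. Relative minimality of $W$ holds because every fiber of a fiber sum is a fiber of one of the summands; $W$ is symplectic as a fiber sum of symplectic Lefschetz fibrations along symplectic fibers; $b^+(Z)>1$ once enough copies of $Y_0$ are used. Since the vanishing cycles of $Z'$ already normally generate $\pi_1(\Sigma)$, the Lefschetz fibration $W\to S^2$ has $\pi_1(W)=\pi_1(\Sigma)/\langle\!\langle\text{vanishing cycles}\rangle\!\rangle=1$, so $H_1(W;\zz)=0$ and $H^2(W;\zz)$ is torsion-free; likewise $\pi_1(Z^\circ)=1$ for $Z^\circ=Z\setminus N(\Sigma)$, so the cokernel of $H^2(W;\zz)\to H^2(\xc;\zz)$ injects, via the long exact sequence of the pair together with excision and Lefschetz duality, into $H^3(W,\xc;\zz)\cong H^3(Z^\circ,\partial Z^\circ;\zz)\cong H_1(Z^\circ;\zz)=0$, hence is onto. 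For the tight surface (statement (2)): the orthogonal complement $\Sigma_0^\perp\subset H^2(K3;\zz)$ of the pencil class contains a hyperbolic plane, hence a class $C$ with $C^2=2$ and $C\cdot\Sigma_0=0$; since the only Seiberg--Witten basic class of K3 is $0$ the adjunction inequality for K3 is sharp, so $C$ is represented by a smoothly embedded genus-$2$ surface, which — one checks — may be taken disjoint from the base points and from a generic member of the pencil, hence gives a tight genus-$2$ surface of $Z'$ disjoint from a generic fiber; being disjoint from every fiber used in the sums it survives into $Z$ and into $W$.

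The one quantitative point is (1). From Novikov additivity, $\chi(A\#_\Sigma B)=\chi(A)+\chi(B)+4g-4$, and the identity $\chi+\sigma=2+2b^+$ (for $b_1=0$), a fiber sum of pieces $P_1,\dots,P_m$ with $b_1=0$ has $b^+=\sum_k b^+(P_k)+(m-1)(2g-1)$, and a short computation gives
\[ \chi(Z)-3b^+(Z)=\bigl(\chi(Z')-3b^+(Z')\bigr)+a\bigl(b^-(Y_0)-2b^+(Y_0)+1-2g\bigr)+b\bigl(b^-(Y_1)-2b^+(Y_1)+1-2g\bigr), \]
whose last two terms are nonpositive by the choice of $Y_0,Y_1$; hence $\chi(Z)-3b^+(Z)\le\chi(Z')-3b^+(Z')=2g+13$, and consequently $\chi(W)-3b^+(W)\le\chi(X)-3b^+(X)+12$, a bound independent of $a,b$ and of $d$ below. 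On the other hand, for a smooth surface $W_*=V_d\subset\mathbb{CP}^3$ of even degree $d$ one has $b^+(V_d)=\tfrac13(d^3-6d^2+11d-3)$, which is odd, and $\chi(V_d)-3b^+(V_d)=2d^2-5d+3$. Since $s_0,s_1$ are coprime, $b^+(Z)$ — and so $b^+(W)=b^+(X)+b^+(Z)+(2g-1)$ — attains every sufficiently large integer, so I may choose $a,b$ with $b^+(W)=b^+(V_d)$ for arbitrarily large even $d$; then for $d$ large enough that $2d^2-5d+3$ exceeds the above constant we get $\chi(W)<\chi(V_d)$, i.e.\ $b^-(W)<b^-(V_d)$, proving (1).

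The main obstacle is not really one of the steps above but the construction of the three building blocks with all the advertised properties — most delicately, exhibiting $Z'$ together with a genuinely embedded tight genus-$2$ surface that lies off a generic fiber (the class $C$ is necessarily non-algebraic, so this needs a mild piece of $4$-manifold topology rather than algebraic geometry), and producing for every $g\ge\ming$ spacers $Y_0,Y_1$ with $b^+>1$, spanning vanishing cycles, the signature bound, and coprime $s_0,s_1$. Granting those inputs, the topology of the fiber sum, the Euler-characteristic bookkeeping, and the Frobenius-type argument for hitting a hypersurface value of $b^+$ are all routine.
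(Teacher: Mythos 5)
Your overall architecture is the same as the paper's (fiber-sum $X$ with spacer fibrations to drive $b^+$ up to a hypersurface value while keeping the excess of $b^-$ under control, plus a coin-problem argument), and your bookkeeping with $\chi-3b^+$ is a correct reformulation of the needed ratio condition. But the quantitative heart of the argument fails as written. Your spacers must satisfy $b^-(Y_\ell)\le 2b^+(Y_\ell)+2g-1$, and the double branched covers of $\cc P^1\times\cc P^1$ do not: the double cover branched over a smooth curve of bidegree $(2a,2g+2)$ has $b^+=1+2g(a-1)$ and $b^-=1+2g(a-1)+4a(g+1)$, so $b^--2b^+-(2g-1)=2ag+4a>0$. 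Each such spacer therefore \emph{increases} $\chi(W)-3b^+(W)$ linearly, and in the limit $\frac{b^-(W)}{b^+(W)}\to 3+\frac{2}{g}>2$, while $\frac{b^-(W_*)}{b^+(W_*)}\to 2$; so for large degree your construction gives $b^-(W)>b^-(W_*)$, the opposite of what is needed. The condition $b^-\le 2b^+ + 2g-1$ for a simply connected genus-$g$ fibration is equivalent to $c_1^2\ge 6\chi_h+2-2g$, which none of the standard constructions satisfy. The paper's essential trick, which you are missing, is to use spacers with \emph{large} $b_1$ (Stipsicz's fibrations on $(\Sigma_k\times S^2)\#4\cpbar$ with $b_1=g$): the relevant increments are $n^\pm=b^\pm-b_1+2g-1$, so a large $b_1$ -- which is harmless because it is killed once the total collection of vanishing cycles generates $H_1(\Sigma)$ -- shrinks both $n^+$ and $n^-$ and yields $\frac{n^-}{n^+}=\frac{g+4}{g}<2$.

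Two smaller problems. First, your coprimality claim is impossible: if $b_1(Y_\ell)=0$ then $b^+(Y_\ell)$ is odd (almost complex, $b_1=0$), so $s_\ell=b^+(Y_\ell)+2g-1$ is even for both $\ell$; only $\gcd=2$ is achievable. This is repairable because $b^+(W)$ and $b^+(W_*)$ are both odd, so only even increments are needed -- this is exactly how the paper phrases it -- but your statement that $b^+(Z)$ attains every sufficiently large integer is false. Second, the tight genus-$2$ surface in $K3\#(2g-2)\cpbar$ lying in a non-algebraic class $C$ with $C^2=2$, embedded with genus exactly $2$ and disjoint from a generic pencil fiber and all base points, is asserted with ``one checks'' but is genuinely the hard part of that step; the paper instead produces the tight surface inside any self-fiber-sum $V\#_\Sigma V$ by smoothing a matching-path Lagrangian sphere with two parallel copies of a torus $S^1\times c'$ in the trivially-fibered region, which is both easier and automatically disjoint from a generic fiber.
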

Much of the proof follows the same lines as in \cite{KM-witten}.  We start by constructing a relatively minimal Lefschetz fibration $Z\to S^{2}$ with fiber $\Sigma$ whose vanishing cycles include a generating set for $H_{1}(\Sigma)$, so that $H_{1}(Z)=0$. Then the union 
\[
W=X^{\circ}\cup_{S^{1}\times\Sigma}Z^{\circ}=X\#_{\Sigma}Z,
\]
where $\#_{\Sigma}$ denotes the Gompf fiber sum \cite{Gompf} along regular fibers $\Sigma$, is also a symplectic manifold admitting a genus $g$ Lefschetz fibration over $S^{2}$. We note that $\Sigma$ is homologically essential in both $X$ and $Z$, since it is a symplectic surface of genus at least $2$, and so the homology class $[\Sigma]$ is primitive in both $X$ and $Z$ by \cite[Theorem 1.4]{Stipsicz}.

We will prove Theorem \ref{thm:symplectic-embedding} by modifying $Z$, and hence $W$, via a sequence of fiber sums.
\begin{lemma}[{cf.\ \cite[Lemmas 11--12]{KM-witten}}]
\label{lem:homology-X}
If $H_{1}(Z)=0$, then $H_{1}(W)=0$ and the restriction map $H^{2}(W)\to H^{2}(X^{\circ})$ is surjective.
\end{lemma}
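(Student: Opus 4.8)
The plan is to apply the Mayer--Vietoris sequence to the decomposition $W=X^{\circ}\cup_{C}Z^{\circ}$, where $C\simeq S^{1}\times\Sigma$ is a collar of the common boundary, in homology for the first assertion and in cohomology for the second. The single piece of geometric input needed is the structure of the complement of a regular fiber in a Lefschetz fibration: if $Y\to S^{2}$ has vanishing cycles $c_{1},\dots,c_{m}$, write $S^{2}=D_{1}\cup D_{2}$ with all critical values in $D_{1}$ and the deleted fiber over a point of $D_{2}$; then $Y^{\circ}$ deformation retracts onto $f^{-1}(D_{1})$, which is obtained from $\Sigma\times D^{2}$ by attaching one $2$--handle along each $c_{j}$ (sitting in a fiber). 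Consequently $H_{1}(Y^{\circ})\cong H_{1}(\Sigma)/\langle c_{1},\dots,c_{m}\rangle$ and the fiber inclusion induces a surjection $\iota_{*}\colon H_{1}(\Sigma)\twoheadrightarrow H_{1}(Y^{\circ})$. Applying this to $Z$ and comparing with the standard presentation $\pi_{1}(Z)=\pi_{1}(\Sigma)/\langle\!\langle c_{1},\dots,c_{m}\rangle\!\rangle$, the hypothesis $H_{1}(Z)=0$ is precisely the statement that the $c_{j}$ generate $H_{1}(\Sigma)$, so it also gives $H_{1}(Z^{\circ})=0$.

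For $H_{1}(W)=0$: since $C$, $X^{\circ}$, $Z^{\circ}$, and $W$ are connected, the Mayer--Vietoris sequence gives $H_{1}(W)=\operatorname{coker}\bigl(H_{1}(C)\xrightarrow{\psi}H_{1}(X^{\circ})\oplus H_{1}(Z^{\circ})\bigr)$. As $H_{1}(Z^{\circ})=0$ this is $\operatorname{coker}\bigl(H_{1}(C)\to H_{1}(X^{\circ})\bigr)$, and the restriction of this last map to the summand $H_{1}(\Sigma)\subset H_{1}(C)=\zz\langle\mu\rangle\oplus H_{1}(\Sigma)$ is the fiber inclusion $\iota^{X}_{*}$, which is onto by the previous paragraph. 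Hence $\psi$ is onto and $H_{1}(W)=0$.

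For surjectivity of $H^{2}(W)\to H^{2}(X^{\circ})$: Lefschetz duality for the compact $4$--manifold $Z^{\circ}$ gives $H^{3}(Z^{\circ},\partial Z^{\circ})\cong H_{1}(Z^{\circ})=0$, so the long exact sequence of the pair shows that restriction $H^{2}(Z^{\circ})\to H^{2}(\partial Z^{\circ})=H^{2}(C)$ is surjective. In the Mayer--Vietoris sequence
\[
H^{2}(W)\xrightarrow{(r_{X},\,r_{Z})}H^{2}(X^{\circ})\oplus H^{2}(Z^{\circ})\longrightarrow H^{2}(C),\qquad (\beta,\gamma)\mapsto\beta|_{C}-\gamma|_{C},
\]
the image of $(r_{X},r_{Z})$ equals $\{(\beta,\gamma):\beta|_{C}=\gamma|_{C}\}$. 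Given any $\beta\in H^{2}(X^{\circ})$, choose (using the surjectivity just established) a class $\gamma\in H^{2}(Z^{\circ})$ with $\gamma|_{C}=\beta|_{C}$; then $(\beta,\gamma)$ lifts to some $\omega\in H^{2}(W)$, and $r_{X}(\omega)=\beta$. Thus $H^{2}(W)\to H^{2}(X^{\circ})$ is onto.

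The step that needs the most care is the first paragraph: identifying $Z^{\circ}$ up to homotopy with a fiber together with Lefschetz $2$--handles, so as to upgrade $H_{1}(Z)=0$ to $H_{1}(Z^{\circ})=0$ and to obtain the surjectivity of the fiber inclusion on $H_{1}$. After that, everything is a formal consequence of Mayer--Vietoris together with Lefschetz duality for $Z^{\circ}$; one must also fix the identification of the two boundary collars when writing the connecting and restriction maps, but that is routine.
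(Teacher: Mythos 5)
Your proof is correct and follows essentially the same route as the paper: both arguments hinge on the fact that $H_1(Z)=0$ forces the vanishing cycles to generate $H_1(\Sigma)$ (hence $H_1(Z^\circ)=0$) and on Lefschetz duality giving $H^3(Z^\circ,\partial Z^\circ)=0$. Your use of Mayer--Vietoris in place of the paper's excision plus the long exact sequence of the pair $(W,X^\circ)$ is only a cosmetic repackaging of the same argument.
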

\begin{proof}
For the first claim, we note that $H_1(Z)=0$ if and only if the vanishing cycles of $Z\to S^2$ generate $H_1(\Sigma)$, and hence the vanishing cycles of $W\to S^2$ do as well.  For the second claim, we have $H^{3}(Z^\circ,\partial Z^\circ)=0$ by Poincar{\'e} duality, and thus $H^3(W,\xc)=0$ by excision, so the restriction $H^{2}(W)\to H^{2}(\xc)$ is surjective by the long exact sequence of the pair $(W,\xc)$.
\end{proof}
\begin{lemma}
\label{lem:fiber-sum-b_pm}
Let $V\to S^{2}$ be a genus $g$ Lefschetz fibration with generic fiber $\Sigma$. If we replace $Z$ with the fiber sum $Z\#_{\Sigma}V$, then this preserves the property $H_{1}(Z)=0$ and hence the conclusion of Lemma \ref{lem:homology-X}. Furthermore, if we let 
\[
n^{\pm}(V)=b^{\pm}(V)-b_{1}(V)+2g-1
\]
then this fiber sum operation increases $b^{+}(W)$ and $b^{-}(W)$ by $n^{+}(V)$ and $n^{-}(V)$, respectively.\end{lemma}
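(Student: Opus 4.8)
The plan is to treat the two assertions in turn. For the persistence of $H_1(Z)=0$, recall (as used already in the proof of Lemma~\ref{lem:homology-X}) that for a Lefschetz fibration over $S^2$ the fundamental group of the total space is the quotient of $\pi_1(\Sigma)$ by the normal subgroup generated by the vanishing cycles, so $H_1(Z;\zz)$ is the quotient of $H_1(\Sigma;\zz)$ by the span of the homology classes of the vanishing cycles; thus $H_1(Z;\zz)=0$ precisely when those classes span $H_1(\Sigma;\zz)$. The monodromy factorization of the fiber sum $Z\#_\Sigma V$ is the concatenation of those of $Z$ and $V$, so its set of vanishing cycles is the union of the two sets; hence if those of $Z$ already span $H_1(\Sigma;\zz)$, so do those of $Z\#_\Sigma V$, and $H_1(Z\#_\Sigma V;\zz)=0$. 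Lemma~\ref{lem:homology-X} then applies verbatim to the new $W$, which is $W\#_\Sigma V$.

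For the effect on $b^\pm$, I would pass through the Euler characteristic and signature. Replacing $Z$ by $Z\#_\Sigma V$ replaces $W$ by $W\#_\Sigma V = W^\circ\cup_{S^1\times\Sigma}V^\circ$. Inclusion--exclusion for $\chi$, using $\chi(S^1\times\Sigma)=0$ and $\chi(D^2\times\Sigma)=\chi(\Sigma)=2-2g$, gives $\chi(W\#_\Sigma V)=\chi(W)+\chi(V)-2\chi(\Sigma)=\chi(W)+\chi(V)+4g-4$. For the signature, Novikov additivity across the closed $3$-manifold $S^1\times\Sigma$ gives $\sigma(W\#_\Sigma V)=\sigma(W^\circ)+\sigma(V^\circ)$, and since $\sigma(D^2\times\Sigma)=0$ (its second homology is carried by a fiber of self-intersection $0$) the same additivity yields $\sigma(W^\circ)=\sigma(W)$ and $\sigma(V^\circ)=\sigma(V)$, whence $\sigma(W\#_\Sigma V)=\sigma(W)+\sigma(V)$.

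Finally I would translate back to Betti numbers. Since $b_1(W)=0$ — and $b_1=0$ persists after the fiber sum by the first assertion — we have $b^\pm=\tfrac12(\chi-2\pm\sigma)$ for both $W$ and $W\#_\Sigma V$, so the change in $b^+$ is $\tfrac12\big(\chi(V)+4g-4+\sigma(V)\big)$; substituting $\chi(V)=2-2b_1(V)+b^+(V)+b^-(V)$ and $\sigma(V)=b^+(V)-b^-(V)$ collapses this to $b^+(V)-b_1(V)+2g-1=n^+(V)$, and the computation for $b^-$ is identical with the sign of $\sigma(V)$ reversed. There is no real obstacle here — the content is entirely bookkeeping with $\chi$ and $\sigma$ — but the one point that genuinely needs care is that the clean identity $b^\pm=\tfrac12(\chi-2\pm\sigma)$ relies on $b_1=0$, so one must invoke the first half of the lemma to know this holds both before and after each modification.
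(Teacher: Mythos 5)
Your proof is correct and follows essentially the same route as the paper: the same vanishing-cycle argument for $H_1$, and the same bookkeeping with $\chi$ and $\sigma$ under fiber sum, the only difference being that you derive the formulas $\chi(W\#_\Sigma V)=\chi(W)+\chi(V)-2\chi(\Sigma)$ and $\sigma(W\#_\Sigma V)=\sigma(W)+\sigma(V)$ via inclusion--exclusion and Novikov additivity, whereas the paper simply cites them from Gompf. Your closing remark that the identity $b^\pm=\tfrac12(\chi-2\pm\sigma)$ requires $b_1=0$ both before and after the operation is exactly the point the paper also relies on ($b_1(\tilde W)=b_1(W)=0$).
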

\begin{proof}
The claim $H_{1}(Z)=0$ is immediate, because the vanishing cycles of $Z$ generate $H_{1}(\Sigma)$ and hence the same is true of $Z\#_{\Sigma}V$. To compute $b^{\pm}(\tilde{W})$, where $\tilde{W}=W\#_{\Sigma}V=X\#_{\Sigma}(Z\#_{\Sigma}V)$ is the result of modifying $X$ by this fiber sum, we recall from \cite{Gompf} the formulas 
\begin{eqnarray*}
\sigma(\tilde{W}) & = & \sigma(W)+\sigma(V)\\
\chi(\tilde{W}) & = & \chi(W)+\chi(V)-2\chi(\Sigma).
\end{eqnarray*}
From the second equation and the fact that $b_{1}(\tilde{W})=b_{1}(W)=0$, we have 
\[
2+b^{+}(\tilde{W})+b^{-}(\tilde{W})=(2+b^{+}(W)+b^{-}(W))+(2-2b_{1}(V)+b^{+}(V)+b^{-}(V))-2(2-2g)
\]
and by adding the first equation this simplifies to 
\[
b^{+}(\tilde{W})=b^{+}(W)+b^{+}(V)-b_{1}(V)+2g-1.
\]
Subtracting the first equation from this gives the analogous formula for $b^{-}$.
\end{proof}
\begin{lemma}
\label{lem:construct-fibrations}
If $g\geq\ming$ then there are relatively minimal genus $g$ Lefschetz fibrations $V_{1}\to S^{2}$ and $V_{2}\to S^{2}$ such that $\frac{n^{-}(V_{1})}{n^{+}(V_{1})}<2$ and $\gcd(n^{+}(V_{1}),n^{+}(V_{2})) = 2$.
\end{lemma}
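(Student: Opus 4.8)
Since this is written as a plan, I will not wrap it in a \texttt{proof} environment.

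The plan is to build $V_1$ and $V_2$ from branched double covers of ruled surfaces, whose Lefschetz fibration invariants are easy to compute and, crucially, can violate the inequality $n^-/n^+\geq 2$ that hyperelliptic fibrations always obey. To see why a new construction is needed, note first that for any genus $g$ Lefschetz fibration $V\to S^2$ with $\mu(V)$ singular fibers one has $\chi(V)=4-4g+\mu(V)$, which forces $n^+(V)+n^-(V)=\mu(V)$, while $n^+(V)-n^-(V)=b^+(V)-b^-(V)=\sigma(V)$; hence $n^\pm(V)=\tfrac12(\mu(V)\pm\sigma(V))$, and since $\mu$ is additive under Gompf fiber sum and $\sigma$ is additive by Novikov additivity, so is $n^\pm$ (this also follows from two applications of Lemma~\ref{lem:fiber-sum-b_pm}). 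For the hyperelliptic fibration $H_g$ with monodromy $(t_{c_1}\cdots t_{c_{2g+1}})^{2g+2}$ one has $\mu(H_g)=(2g+1)(2g+2)$ and, by Endo's signature formula for hyperelliptic fibrations, $\sigma(H_g)=-2(g+1)^2$, so $n^+(H_g)=g(g+1)$, $n^-(H_g)=(g+1)(3g+2)$, and $n^-(H_g)/n^+(H_g)=3+\tfrac2g>2$; more generally every hyperelliptic fibration with only non-separating vanishing cycles has $\sigma/\mu=-\tfrac{g+1}{2g+1}<-\tfrac13$, hence $n^-/n^+>2$, and fiber summing such fibrations cannot escape this range. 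So another family is needed.

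For that family I would fix integers $h\geq 1$ and $a,b\geq 1$ and take a smooth curve $B$ in the (ample, base-point-free) linear system $|2aF_1+2bF_2|$ on $\Sigma_h\times S^2$, where $F_1$ and $F_2$ are the classes of $\Sigma_h\times\mathrm{pt}$ and $\mathrm{pt}\times S^2$; let $Y_{a,b,h}\to\Sigma_h\times S^2$ be the double cover branched along $B$. Composing with projection to the $S^2$ factor presents $Y_{a,b,h}$ as a relatively minimal Lefschetz fibration over $S^2$ whose generic fiber is the double cover of $\Sigma_h$ branched at the $B\cdot F_1=2b$ points in a fiber $\Sigma_h\times\mathrm{pt}$, hence of genus $2h-1+b$, and whose singular fibers (over the critical values of $B\to S^2$) have only positive-genus components. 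Hirzebruch's formula for double covers gives $\sigma(Y_{a,b,h})=-2(aF_1+bF_2)^2=-4ab$, while counting those critical values via Riemann--Hurwitz gives $\mu(Y_{a,b,h})=2a(4b+2h-2)$; therefore
\[ n^+(Y_{a,b,h})=2a(b+h-1),\qquad n^-(Y_{a,b,h})=2a(3b+h-1), \]
so $n^-(Y_{a,b,h})/n^+(Y_{a,b,h})=(3b+h-1)/(b+h-1)<2$ exactly when $b<h-1$.

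Given $g\geq\ming$, I would then fix an integer $h_0$ with $(g+2)/3<h_0\leq g/2$ --- such an $h_0$ exists for every $g\geq\ming$ (the interval has length $(g-4)/6$, so contains an integer once $g\geq 10$, and the cases $g=8,9$ are checked by hand), though it fails for $g=7$, which is why this approach first applies at $g=\ming$. Setting $b_0=g+1-2h_0$ and $b_1=g+3-2h_0$, I take $V_1=Y_{1,b_0,h_0}$ and $V_2=Y_{1,b_1,h_0-1}$, which are relatively minimal genus $g$ Lefschetz fibrations over $S^2$ (one checks $b_0,b_1\geq 1$ and $h_0-1\geq 3$ from the bounds on $h_0$). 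The inequality $h_0>(g+2)/3$ is exactly $b_0<h_0-1$, so $n^-(V_1)/n^+(V_1)<2$; and $n^+(V_1)=2(g-h_0)$ and $n^+(V_2)=2(g-h_0+1)$ are consecutive even integers, so $\gcd(n^+(V_1),n^+(V_2))=2\gcd(g-h_0,g-h_0+1)=2$, as required. The step I expect to be most delicate is confirming that these branched double covers really are relatively minimal Lefschetz fibrations for the relevant $(a,b,h)$ --- in particular analyzing the singular fibers when $b=1$, where the two branch points in a fiber collide and the fiber may degenerate to two positive-genus curves meeting at a node --- together with carrying out the signature and Euler-characteristic bookkeeping; should the double covers prove too rigid at some small $g\geq\ming$, additivity of $n^\pm$ lets me instead realize $V_1$ and $V_2$ as fiber sums of several of the $Y_{a,b,h}$ (and, if necessary, $H_g$), at the cost of replacing the clean formulas above by a short numerical-semigroup argument.
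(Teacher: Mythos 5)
Your numerology is correct and, in fact, essentially reproduces the paper's: the paper also builds $V_1$ and $V_2$ as double covers of $\Sigma_h\times S^2$ branched over a curve in the class $2aF_1+2bF_2$ (with $a=1$ and, for $g=2k$, the pairs $(b,h)=(1,k)$ and $(3,k-1)$; for $g=2k+1$, the pairs $(2,k)$ and $(4,k-1)$ --- i.e.\ your choice $h_0=\lfloor g/2\rfloor$), and your values $n^{+}=2a(b+h-1)$, $n^{-}=2a(3b+h-1)$ agree with the paper's $(g,g+4)$ and $(g+2,\,\cdot\,)$ in those cases. The computations of $\sigma$, $\mu$, and the fiber genus, the identity $n^{\pm}=\tfrac12(\mu\pm\sigma)$, and the arithmetic with $h_0\in((g+2)/3,\,g/2]$ are all fine.

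The genuine gap is the very first step: the assertion that $|2aF_1+2bF_2|$ is a base-point-free linear system containing a smooth connected curve $B$. By K\"unneth, $H^0(\Sigma_h\times S^2,\mathcal{O}(2aF_1+2bF_2))=H^0(S^2,\mathcal{O}(2a))\otimes H^0(\Sigma_h,\mathcal{O}(D))$ for some degree-$2b$ divisor $D$ on $\Sigma_h$, and for a general choice of $D$ with $2b\le h$ one has $h^0(\mathcal{O}(D))=1$; then \emph{every} member of the system is of the form $(D\times S^2)+(\text{union of $2a$ fibres})$, a reducible union of horizontal and vertical curves, so there is no smooth member at all. This is exactly the regime you need: your condition $b<h-1$ (and in particular $b_0=1$, $h_0=4$ at $g=\ming$) puts you where the fibre-direction line bundle is far from very ample, so ampleness of the numerical class does not rescue you --- ampleness guarantees smooth members of $|m(2aF_1+2bF_2)|$ for large $m$, not of the system itself. (One cannot dodge this by taking $B$ to be merely a smooth surface in the homology class either: to get an honest, non-achiral Lefschetz fibration you need $B$ to meet the fibres positively with only positive simple tangencies, which forces you back to a holomorphic or at least braided representative.) The repair is either to make a special choice --- take $\Sigma_h$ hyperelliptic and $D=b\cdot g^1_2$, so that $|D|$ is base-point free and $B$ becomes the preimage of a smooth curve under the double cover $\Sigma_h\times S^2\to \mathbb{P}^1\times S^2$, after which your Bertini/tangency argument goes through --- or to do what the paper does: take the branch locus to be the singular union of horizontal and vertical curves itself and resolve, which is Stipsicz's construction and accounts for the $\#\,k\cpbar$ summands in the paper's $V_1$ and $V_2$. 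So the delicate point is not, as you guessed, the structure of the singular fibres at $b=1$, but the existence of the smooth branch curve.
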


\begin{proof}
Suppose that $g=2k$ is even. Stipsicz \cite[Section 5]{Stipsicz} has constructed a genus $g$ Lefschetz fibration on $V_{1}=(\Sigma_{k}\times S^{2})\#4\cpbar$ by realizing it as the resolution of the double cover of $\Sigma_{k}\times S^{2}$ branched over the singular curve 
\[
(\{p_{1},p_{2}\}\times S^{2})\cup(\Sigma_{k}\times\{q_{1},q_{2}\})
\]
and composing the cover $V_{1}\to\Sigma_{k}\times S^{2}$ with the projection to $S^{2}$; the generic fiber is the double cover of $\Sigma_{k}$ branched at two points, which is indeed $\Sigma_{g}$. Similarly, if we take the branched double cover of $\Sigma_{k-1}\times S^{2}$ with branch locus $(\{p_{1},\dots,p_{6}\}\times S^{2})\cup(\Sigma_{k-1}\times\{q_{1},q_{2}\})$ and resolve singularities, the resulting $V_{2}\cong(\Sigma_{k-1}\times S^{2})\#12\cpbar$ admits a genus $g$ Lefschetz fibration. The resulting triples of Betti numbers $(b_{1},b^{+},b^{-})$ are $(g,1,5)$ and $(g-2,1,13)$ for $V_{1}$ and $V_{2}$ respectively, so we have $\gcd(n^{+}(V_{1}),n^{+}(V_{2}))=\gcd(g,g+2)=2$ and $\frac{n^{-}(V_{1})}{n^{+}(V_{1})}=\frac{g+4}{g}<2$ for all even $g>4$.

Now suppose instead that $g=2k+1$ is odd. Stipsicz \cite{Stipsicz} constructs a genus $g$ Lefschetz fibration on $V_{1}=(\Sigma_{k}\times S^{2})\#8\cpbar$ by the same method as in the even case, using the double cover $\Sigma_{g}\to\Sigma_{k}$ with 4 branch points, and if we use the double cover $\Sigma_{g}\to\Sigma_{k-1}$ with $8$ branch points we get another such fibration on $V_{2}=(\Sigma_{k-1}\times S^{2})\#16\cpbar$.  Then $V_{1}$ and $V_{2}$ have Betti numbers $(b_{1},b^{+},b^{-})$ equal to $(g-1,1,9)$ and $(g-3,1,17)$ respectively, so $\gcd(n^{+}(V_{1}),n^{+}(V_{2}))=\gcd(g+1,g+3)=2$ and $\frac{n^{-}(V_{1})}{n^{+}(V_{1})}=\frac{g+9}{g+1}<2$ for all odd $g>7$.
\end{proof}

It may be possible to construct appropriate $V_1$ and $V_2$ by other means for most values of $g < \ming$; in fact, the above construction already works for $g=6$.  However, it turns out that we cannot do this when $g=2$:

\begin{proposition}
There is no genus $2$ Lefschetz fibration $V_{1}\to S^{2}$ with $\frac{n^{-}(V_{1})}{n^{+}(V_{1})}\leq2$.\end{proposition}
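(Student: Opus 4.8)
The plan is to reduce the statement to a numerical inequality between the numbers of nonseparating and separating vanishing cycles, and then to observe that this inequality can never hold. So let $V_1\to S^2$ be a genus $2$ Lefschetz fibration with $s$ nonseparating and $t$ separating vanishing cycles, hence $s+t$ singular fibers in all. Counting Euler characteristics fiber by fiber gives $\chi(V_1)=s+t-4$, and since every genus $2$ Lefschetz fibration is hyperelliptic, Endo's signature formula for hyperelliptic Lefschetz fibrations gives $\sigma(V_1)=-\tfrac15(3s+t)$. Using $b^{\pm}(V_1)=\tfrac12\bigl(\chi(V_1)\pm\sigma(V_1)\bigr)-1+b_1(V_1)$ and substituting into the definition $n^{\pm}(V_1)=b^{\pm}(V_1)-b_1(V_1)+3$, the terms involving $b_1$ cancel and one finds
\[ n^{+}(V_1)=\frac{s+2t}{5},\qquad n^{-}(V_1)=\frac{4s+3t}{5}. \]
Therefore $\tfrac{n^{-}(V_1)}{n^{+}(V_1)}\le 2$ is equivalent to $4s+3t\le 2(s+2t)$, i.e.\ to $t\ge 2s$; equivalently, since $K_{V_1}^2=2\chi(V_1)+3\sigma(V_1)$, it is equivalent to $K_{V_1}^2\ge \chi(V_1)-4$. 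So the proposition reduces to showing that every genus $2$ Lefschetz fibration over $S^2$ has $t<2s$.

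To exclude $t\ge 2s$ I would assemble the following. First, $s\ge 1$: a genus $2$ Lefschetz fibration with only separating vanishing cycles would have monodromy lying in the Torelli group $\mathcal I(\Sigma_2)$, and the freeness of $\mathcal I(\Sigma_2)$ rules out a nonempty positive product of separating twists equalling the identity. Next, the reducible fibers supply classes $\Sigma, F_1,\dots,F_t$ that are linearly independent in $H_2(V_1)$ by the first lemma of Section \ref{sec:lf-basic-classes}, so $b_2(V_1)\ge t+1$ and hence $s+2b_1(V_1)\ge 7$; combining this with the constraint obtained by applying the Moishezon--Livn\'e rigidity for genus $1$ fibrations to the image of the monodromy in $Sp(4,\zz)$ (which forces $s$ to be even, and to be a multiple of $12$ unless $b_1(V_1)=0$) already gives $s\ge 8$, and then the displayed formula shows $b^{+}(V_1)=\tfrac{s+2t}{5}-3+b_1(V_1)\ge 5$, so $V_1$ is a symplectic $4$-manifold with $b^{+}>1$. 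Finally one invokes the bound $K_{V_1}^2<\chi(V_1)-4$ — equivalently $t<2s$ — valid for every genus $2$ Lefschetz fibration over $S^2$, which contradicts $t\ge 2s$ and finishes the proof.

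The main obstacle is precisely this last inequality. The elementary inputs listed above do not exclude pairs such as $(s,t)=(8,16)$ or $(12,24)$: one genuinely needs an upper bound on the number of reducible singular fibers of a genus $2$ Lefschetz fibration in terms of the number of irreducible ones, which comes from the finer structure theory of genus $2$ Lefschetz fibrations rather than from the formal properties of the Donaldson or Seiberg--Witten invariants used elsewhere in this paper.
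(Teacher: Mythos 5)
Your reduction is correct and, after a change of coordinates, is the same one the paper makes: writing $n^{\pm}=\frac{e\pm\sigma}{2}+2$ for $g=2$, the condition $\frac{n^{-}(V_1)}{n^{+}(V_1)}\leq 2$ is equivalent to $3\sigma+e\geq -4$, which via Endo's signature formula is exactly your $t\geq 2s$. The problem is that you never prove the one statement that carries the whole proposition, namely that every genus $2$ Lefschetz fibration over $S^2$ satisfies $t<2s$ (equivalently $3\sigma+e<-4$, i.e.\ $c_1^2<\chi-4$). You ``invoke'' this bound and then immediately concede that it is the main obstacle and that your elementary inputs do not exclude $(s,t)=(8,16)$, so as written the argument is circular at its only nontrivial step. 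This is a genuine gap rather than a routine citation to fill in: the missing input is precisely {\"O}zba{\u{g}}c{\i}'s geography theorem for genus $2$ Lefschetz fibrations \cite[Corollary 10]{Ozbagci}, which states $c_1^2\leq 6\chi_h-3$; in your variables this reads $t\leq 2s-5$, and the paper quotes it to finish the proof in two lines ($c_1^2\leq 6\chi_h-3$ is equivalent to $3\sigma+e\leq -6$, contradicting $3\sigma+e\geq -4$).

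A secondary point: several of the auxiliary facts you marshal en route are incorrect or not meaningful. ``Moishezon--Livn\'e rigidity for genus $1$ fibrations applied to the image of the monodromy in $Sp(4,\zz)$'' is not a defined operation, and the conclusions you extract from it ($s$ even, and a multiple of $12$ unless $b_1=0$) are not true constraints for genus $2$; the actual constraint coming from the abelianization of the genus $2$ mapping class group is a congruence modulo $10$ on a linear combination of $s$ and $t$. Similarly, the freeness of the genus $2$ Torelli group does not by itself kill a positive word in separating twists, since those twists need not be conjugate, nor have coherent images in the abelianization, inside the Torelli group. None of these side claims is needed for the paper's route, which requires no lower bound on $s$ or on $b^+$ --- only the single geography inequality above.
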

\begin{proof}
Since $n^{\pm}=\frac{e\pm\sigma}{2}+2g-2$, where $e$ and $\sigma$ are the Euler number and signature of $V_{1}$, the condition $\frac{n^{-}}{n^{+}}\leq2$ is equivalent to $3\sigma+e\geq4-4g$. In the case $g=2$, {\"O}zba{\u{g}}c{\i} \cite[Corollary 10]{Ozbagci} has shown that $V_{1}$ must satisfy $c_{1}^{2}\leq6\chi_{h}-3$, where $c_{1}^{2}=3\sigma+2e$ and $\chi_{h}=\frac{\sigma+e}{4}$; but this is equivalent to $3\sigma+e\leq-6$, which makes the desired inequality $3\sigma+e\geq-4$ impossible.
\end{proof}

\begin{lemma}
\label{lem:tight-fibration}
Let $V \to S^2$ be any nontrivial Lefschetz fibration with fiber $\Sigma$ of genus at least $1$.  Then the fiber sum $V \#_\Sigma V$ contains a tight surface of genus 2 which is disjoint from a fiber of the induced fibration.
\end{lemma}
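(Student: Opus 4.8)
The plan is to build the surface explicitly out of a matching sphere and two rim tori, and then read off its genus and self-intersection. Since $V$ is nontrivial it has a Lefschetz critical point, hence a vanishing cycle $c\subset\Sigma$, which I would take to be non-separating (this is automatic when $\Sigma$ is a torus, and holds for the fibrations to which the lemma is applied; an entirely separating collection of vanishing cycles would need a variant of the argument below). In the fiber sum $V\#_\Sigma V=V^\circ_1\cup_{\Sigma\times S^1}V^\circ_2$ the cycle $c$ occurs once in each copy, so there are Lefschetz thimbles $D_1\subset V^\circ_1$ and $D_2\subset V^\circ_2$ whose common boundary is $c$, sitting in a regular fiber $\Sigma_0$ over a point of the gluing circle. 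After a small isotopy their union is a smoothly embedded $2$-sphere $S_c=D_1\cup_c D_2$; this is the matching sphere of $c$, with $[S_c]^2=-2$. Next I would choose an embedded curve $a\subset\Sigma$ meeting $c$ transversally in one point --- possible because a non-separating simple closed curve is primitive in $H_1(\Sigma;\zz)$ --- and form the rim torus $T_a=a\times\gamma$, where $\gamma$ is a loop in the base parallel to the gluing circle over which the fibration is trivial; this is an embedded torus with $[T_a]^2=0$.

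The central computation is that $T_a$ and $S_c$ meet transversally in exactly one point. Indeed $T_a$ lies over $\gamma$, while $S_c$ meets $\gamma$ only in the single fiber $\Sigma_0$ where $D_1$ and $D_2$ are joined; there $T_a\cap\Sigma_0=a$ and $S_c\cap\Sigma_0=c$, so $T_a\cap S_c=a\cap c$ is one point, and comparing tangent spaces (spanned by $a$ and the $\gamma$-direction on one side, by $c$ and the base direction transverse to $\gamma$ on the other) shows the intersection is transverse. Two parallel copies $T_a,T_a'$ of the rim torus are disjoint, and both $T_a$ and $S_c$ are easily made disjoint from a generic fiber by routing $\gamma$ and the thimble paths away from the relevant regular value. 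Now take two parallel copies of $T_a$ together with $S_c$ and smooth the two double points; smoothing preserves homology, so the resulting surface $S$ represents $2[T_a]+[S_c]$, with
\[ [S]^2=(2[T_a]+[S_c])^2=4[T_a]^2+4\,[T_a]\cdot[S_c]+[S_c]^2=0+4-2=2, \]
and since the incidence graph of the configuration is a tree (each copy of $T_a$ meets only $S_c$) the smoothing is connected of genus equal to the sum of the genera, namely $1+1+0=2$. Thus $S$ is a tight surface of genus $2$, disjoint from a fiber of the induced fibration, as required.

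The \textbf{main obstacle} is making the two standard inputs precise with the correct conventions: that the union of the two Lefschetz thimbles along $c$ is, after isotopy, a smoothly embedded sphere of square $-2$, and that the genus and self-intersection bookkeeping is carried out with the right framings for Lefschetz thimbles and for the fiber-sum gluing --- in particular that $T_a\cdot S_c=\pm1$ and $T_a^2=0$ as claimed (these can also be checked by a Mayer--Vietoris computation of $H_2(V\#_\Sigma V)$, but the geometric picture is cleaner). The remaining points --- embeddedness of $S_c$, of $T_a$, and of their parallel copies, transversality after a small perturbation, and the standard formulas for the effect on genus and self-intersection of smoothing transverse double points --- are routine, as is the observation that every piece of the construction avoids a chosen regular fiber.
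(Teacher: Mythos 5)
Your construction is essentially the paper's proof in different notation: your matching sphere $D_1\cup_c D_2$ is the Lagrangian sphere lying over a matching path joining the two copies of a critical value, your rim torus $a\times\gamma$ is the torus $S^1\times c'$ inside the trivialized preimage $f^{-1}(\gamma)\cong S^1\times\Sigma$ of the gluing circle, and the paper likewise smooths the sphere with two parallel copies of that torus (citing Kronheimer--Mrowka, Corollary 8.5, for the genus and self-intersection bookkeeping) to get a genus $2$ surface of square $2$ avoiding a generic fiber. Your caveat about needing a non-separating vanishing cycle is well taken, since a simple closed curve meeting $c$ geometrically once exists only when $c$ is non-separating; the paper's proof makes the same tacit assumption, which is harmless in the applications (and automatic in genus $1$).
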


\begin{proof}
Pick two critical values $x_1,x_2$ of the fibration $f: V \#_\Sigma V \to S^2$ which correspond to the same critical value of $V \to S^2$, so that the fibers over them have the same vanishing cycle $c\subset\Sigma$, and let $\alpha \subset S^2$ be a matching path \cite{Seidel-book} with endpoints at $x_1$ and $x_2$.  Then there is a Lagrangian sphere $L$ lying above $\alpha$, and in particular $L\cdot L=-2$.

Now take a closed curve $\gamma \subset S^2$ which intersects $\alpha$ once and separates the sphere into two disks corresponding to the bases of each copy of $V$.  The monodromy along $\gamma$ is trivial, so $f^{-1}(\gamma) \cong S^1\times \Sigma$.  The fiber $\Sigma$ above $\gamma\cap\alpha$ intersects $L$ in $c$, and if $c' \subset\Sigma$ is a curve which intersects $c$ once then $T = S^1 \times c' \subset f^{-1}(\gamma)$ is a torus of self-intersection zero which intersects $L$ in a point.  As remarked in \cite[Corollary 8.5]{KM-embedded}, we can take $L$ and two parallel copies of $T$ and smooth their intersections to get a tight surface of genus $2$ in $V \#_\Sigma V$ which is disjoint from every fiber over a point outside a small neighborhood of $\alpha\cup\gamma$.
\end{proof}

We are now ready to finish proving the main theorem of this section.
\begin{proof}[Proof of Theorem \ref{thm:symplectic-embedding}]
We proceed exactly as in \cite[Lemma 13]{KM-witten}, starting with $W=X\#_{\Sigma}Z$ for an appropriate $Z$ as above, but without having to increase the genus of the given fibration $X\to S^{2}$. Note that any two symplectic manifolds with $b_1=0$ have values of $b^+$ differing by an even number, since $b^+$ is necessarily odd for each of them.

Take two genus $g$ Lefschetz fibrations $V_{1}$ and $V_{2}$ over $S^{2}$ such that $\gcd(n^{+}(V_{1}),n^{+}(V_{2})) = 2$ and $\frac{n^{-}(V_{1})}{n^{+}(V_{1})}<2$, as in Lemma \ref{lem:construct-fibrations}. Then any sufficiently large even number can be written as a nonnegative linear combination $m=k_{1}\cdot n^{+}(V_{1})+k_{2}\cdot n^{+}(V_{2})$ with $k_{2}<n_{+}(V_{1})$. If we replace $Z$ with its fiber sum with $k_{1}$ copies of $V_{1}$ and $k_{2}$ copies of $V_{2}$, the result will be to increase $b^{+}(W)$ by $m$ while keeping $H_{1}(W)=0$, as in Lemma \ref{lem:fiber-sum-b_pm}, and as $m$ gets large we will have $\frac{b^{-}(W)}{b^{+}(W)}\to\frac{n^{-}(V_{1})}{n^{+}(V_{1})}<2$ since $k_{2}$ is bounded independently of $m$.   If we also insist that $k_1 \geq 2$, then it follows that $b^+(Z) > 1$ since $n^+(V_1)$ is positive, and furthermore $Z$ (and hence $W$) contains a tight genus 2 surface by Lemma \ref{lem:tight-fibration}.  In addition, $Z$ is relatively minimal, since the same is true of $V_1$, $V_2$ and the initial choice of $Z$ by construction.

In particular, since smooth hypersurfaces $W_{*}\subset\mathbb{CP}^{3}$ of degree $d$ satisfy $\frac{b^{-}(W_{*})}{b^{+}(W_{*})}\to2$ as $\deg(W_{*})\to\infty$, we can take $d$ to be even and sufficiently large and then this construction will provide $W$ with $b^{-}(W)<b^{-}(W_{*})$ and $b^{+}(W)=b^{+}(W_{*})$, as desired.
\end{proof}

%
%

\section{Donaldson invariants of fiber sums}
\label{sec:donaldson-fiber}

Let $X\to S^{2}$ be a symplectic, relatively minimal Lefschetz fibration with fibers $\Sigma$ of genus $g\geq\ming$. Take a symplectic Lefschetz fibration $Z\to S^2$ as constructed in Theorem \ref{thm:symplectic-embedding}, and let $W=X\#_{\Sigma}Z$ be their fiber sum; note that $W$ is also relatively minimal.  Then $W$ has Donaldson simple type because it contains a tight surface and Seiberg--Witten simple type because it is symplectic, and some blowup of $W$ satisfies Witten's conjecture by Theorem \ref{thm:witten-conjecture}, hence by Proposition \ref{prop:witten-blowdown} so does $W$.  Therefore the Donaldson series of $W$ satisfies
\[
\cald_{W}^{w}(h)=c(W)\exp\left(\frac{Q(h)}{2}\right)\sum_{r=1}^{s}(-1)^{(w^{2}+K_{r}\cdot w)/2}SW(K_{r})e^{K_{r}\cdot h}
\]
where $c(W)$ is a nonzero rational number.

\begin{proposition}
\label{prop:donaldson-nonzero}
If $h\in H^{2}(W;\zz)$ is the class of the fiber $\Sigma$, then $\cald_{W}^{w}(h)$ is nonzero with leading term of order $e^{2g-2}$ for all $w\in H^2(W;\zz)$.
\end{proposition}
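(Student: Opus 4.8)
The plan is to extract the statement directly from the Witten-type formula for $\cald^w_W$ displayed just above, using Theorem~\ref{thm:sw-fibration} to pin down the leading term.

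Since $h=[\Sigma]$ and $\Sigma\cdot\Sigma=0$ we have $Q(h)=0$, so the Gaussian prefactor $\exp(Q(h)/2)$ drops out and, writing the argument as $th$ for $t\in\rr$,
\[
\cald^w_W(th)=c(W)\sum_{r=1}^{s}(-1)^{(w^2+K_r\cdot w)/2}\,SW(K_r)\,e^{t(K_r\cdot\Sigma)}
\]
is a finite exponential sum in $t$, whose leading term has order $e^{t\cdot m}$ for $m=\max_r(K_r\cdot\Sigma)$ provided the sum of the signed coefficients $c(W)(-1)^{\cdots}SW(K_r)$ over the basic classes $K_r$ with $K_r\cdot\Sigma=m$ is nonzero. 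Because $W$ has simple type and $\Sigma\subset W$ is an embedded surface of genus $g\ge 2$ with $\Sigma^2=0$ in a non-torsion class (it is symplectic, so $[\Sigma]\cdot[\omega]>0$), the adjunction inequality \cite{KM-embedded} gives $K_r\cdot\Sigma\le 2g-2$ for every basic class $K_r$; and the value $2g-2$ is attained, since the canonical class $K_W$ is a Seiberg--Witten (hence Donaldson) basic class with $K_W\cdot\Sigma=2g-2$ by the adjunction formula for the symplectic surface $\Sigma$.

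It then remains to see that the coefficient of $e^{t(2g-2)}$ does not vanish. Here I would invoke Theorem~\ref{thm:sw-fibration}: the fibration $W\to S^2$ is relatively minimal of genus $g\ge 2$ with $b^+(W)>1$, so $K_W$ is the \emph{unique} Seiberg--Witten basic class of $W$ satisfying $K\cdot\Sigma=2g-2$. Hence exactly one term of the sum contributes at top order, and its coefficient equals $c(W)(-1)^{(w^2+K_W\cdot w)/2}SW(K_W)$. Now $c(W)\ne 0$ because $W$ satisfies Witten's conjecture, $SW(K_W)=\pm 1\ne 0$ by Taubes \cite{Taubes-symplectic}, and the sign $(-1)^{(w^2+K_W\cdot w)/2}$ is $\pm 1$; so the leading coefficient is nonzero for every $w$, and $\cald^w_W(h)$ is nonzero with leading term of order $e^{2g-2}$.

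The argument is short once Theorem~\ref{thm:sw-fibration} and the Witten-type formula for $W$ are in hand; the only real content is that uniqueness of the Seiberg--Witten basic class maximizing $K\cdot\Sigma$ rules out cancellation at top order, and one must be slightly careful that the adjunction inequality genuinely applies to $\Sigma$ (it does, $\Sigma$ being symplectic of square zero with non-torsion homology class).
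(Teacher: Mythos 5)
Your proof is correct and follows essentially the same route as the paper: both use the Witten-type formula for $\cald^w_W$, Taubes' result that $SW(K_W)=\pm 1$, Theorem \ref{thm:sw-fibration} to isolate the unique top-order term, and the adjunction inequality to rule out higher-order terms. The only difference is that you spell out the hypotheses for applying the adjunction inequality to the square-zero fiber, which the paper leaves implicit.
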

\begin{proof}
Let $K_W$ be the canonical class of $W$.  Taubes \cite{Taubes-symplectic} proved that $K_{W}$ is a Seiberg--Witten basic class and $SW(K_W)=\pm1$.  Furthermore, we know from Theorem \ref{thm:sw-fibration} that $K_W$ is the only Seiberg--Witten basic class $K$ for which $K\cdot \Sigma = 2g-2$, and so the coefficient of $e^{2g-2}$ in the series $\cald^w_W(h)$ is equal to $\pm c(W)$, which is nonzero.  Therefore $\cald_W^w(h) \neq 0$, and there are no terms of higher order by the adjunction inequality.
\end{proof}

We now wish to apply a theorem of Mu{\~n}oz concerning Donaldson invariants of fiber sums of manifolds with simple type; this includes $Z$ and $W$ by construction.  We say that classes $w_W\in H^{2}(W;\zz)$, $w_{X}\in H^{2}(X;\zz)$, and $w_{Z}\in H^{2}(Z;\zz)$ are \emph{compatible} if they are all odd when evaluated on $\Sigma$; $w_W$ agrees with $w_{X}$ and $w_{Z}$ on $X^\circ = X\backslash N(\Sigma)$ and $Z^\circ = Z\backslash N(\Sigma)$; and $w_W^{2}\equiv w_{X}^{2}+w_{Z}^{2}\pmod{4}$. This last congruence always holds mod $2$ and can be achieved mod $4$ by replacing $w_W$ with $w_W+PD(\Sigma)$ if necessary.

Next, we define $\mathcal{H}\subset H_{2}(W)$ to be the subspace of all classes $D$ such that $D|_{\partial X^{\circ}}$ is a multiple of the class $[S^{1}\times\{*\}]\in H_{1}(\partial X^{\circ})=H_{1}(S^{1}\times\Sigma)$. Choose a linear map $\mathcal{H}\mapsto H_{2}(X)\oplus H_{2}(Z)$ so that if $D\mapsto(D_{X},D_{Z})$, then $D^{2}=D_{X}^{2}+D_{Z}^{2}$ and $D|_{X^{\circ}}=D_{X}|_{X^{\circ}}$ and likewise for $D$.  Finally, give $X$ and $Z$ homology orientations, and let $W$ have the induced homology orientation as in \cite[Remark 8]{Munoz-gluing}.  Mu{\~n}oz's theorem now says the following, assuming $X$ also has simple type:

\begin{theorem}[{\cite[Theorem 9]{Munoz-gluing}}]
\label{thm:munoz-gluing}
Let $w_W\in H^{2}(W;\zz)$ and pick compatible classes $w_{X}\in H^{2}(X;\zz)$ and $w_{Z}\in H^{2}(Z;\zz)$. Write the Donaldson series for $X$ and $Z$ as 
\begin{eqnarray*}
\cald_{X}^{w_{X}}(\alpha) & = & \exp\left(\frac{Q_{X}(\alpha)}{2}\right)\sum_{j}a_{j,w_{X}}e^{K_{j}\cdot\alpha}\\
\cald_{Z}^{w_{Z}}(\beta) & = & \exp\left(\frac{Q_{Z}(\beta)}{2}\right)\sum_{k}b_{k,w_{Z}}e^{L_{k}\cdot\beta}.
\end{eqnarray*}
Then 
\begin{eqnarray*}
\cald_{W}^{w_W}(tD) & = & \exp\left(\frac{Q_W(tD)}{2}\right)\left(\sum_{K_{j}\cdot\Sigma=L_{k}\cdot\Sigma=2g-2}-2^{7g-9}a_{j,w_X}b_{k,w_Z}e^{(K_{j}\cdot D_{X}+L_{k}\cdot D_{Z}+2\Sigma\cdot D)t}\right.\\
 &  & \left.+\sum_{K_{j}\cdot\Sigma=L_{k}\cdot\Sigma=-(2g-2)}(-1)^{g}2^{7g-9}a_{j,w_X}b_{k,w_Z}e^{(K_{j}\cdot D_{X}+L_{k}\cdot D_{Z}-2\Sigma\cdot D)t}\right).
\end{eqnarray*}
\end{theorem}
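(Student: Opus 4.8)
The plan is to derive the formula from the general gluing theory for Donaldson invariants, in the Floer-homological form developed by Mu{\~n}oz, with his computation of the Fukaya--Floer homology of $\Sigma\times S^1$ as the essential input. Write $Y=\Sigma\times S^1$ and $W=X^\circ\cup_Y Z^\circ$. The relative Donaldson invariants of $X^\circ$ and $Z^\circ$, with boundary markings determined by $w_X$ and $w_Z$ and with the chosen homology orientations, define elements $\Phi_{X^\circ}$ and $\Phi_{Z^\circ}$ of the Fukaya--Floer homologies $HF^*(Y)$ and $HF^*(-Y)$, and the gluing theorem identifies $\cald_W^{w_W}(tD)$ with the Floer pairing $\langle\Phi_{X^\circ}(tD_X),\,\Phi_{Z^\circ}(tD_Z)\rangle$. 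The compatibility condition on $(w_W,w_X,w_Z)$ is exactly what makes the markings glue across $Y$, and the induced homology orientation on $W$ is what pins down the overall sign of the pairing.

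The first substantive step is to recall the structure of $HF^*(Y)$ with the odd marking ($w\cdot\Sigma$ odd). Because every $D\in\mathcal{H}$ meets the separating hypersurface $Y$ in a multiple of $[S^1\times\{*\}]$, only the subalgebra generated by the operators $\mu(\Sigma)$, $\mu(\mathrm{pt})$ and $\mu([S^1\times\{*\}])$ is relevant, and on the simple-type quotient $HF^*(Y)$ decomposes into generalized eigenspaces for $\mu(\Sigma)$ with eigenvalues in $\{2g-2,\,2g-6,\,\dots,\,-(2g-2)\}$ and for $\mu(\mathrm{pt})$ with eigenvalues $\pm 2$. The crucial facts, which are precisely the content of Mu{\~n}oz's Floer computation, are that the extreme eigenspaces $\lambda=\pm(2g-2)$ are one-dimensional, that the eigenspaces with $|\lambda|<2g-2$ are isotropic for the Floer pairing between $Y$ and $-Y$ (so they contribute nothing to a closed invariant assembled from simple-type pieces), and that the Floer pairing on the extreme eigenspaces has an explicitly computable value.

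Next I would identify the relevant components of $\Phi_{X^\circ}$ in terms of $\cald_X^{w_X}$. Capping $X^\circ$ with $D^2\times\Sigma$ recovers $X$, and the relative invariant of $D^2\times\Sigma$ is a distinguished element of $HF^*(-Y)$; pairing $\Phi_{X^\circ}$ against it returns $\cald_X^{w_X}$. Combined with the eigenspace decomposition and the structure theorem (Theorem \ref{thm:structure-simple-type}), this forces the $\lambda$-eigencomponent of $\Phi_{X^\circ}(tD_X)$ to be $\exp\bigl(Q_X(tD_X)/2\bigr)\bigl(\sum_{K_j\cdot\Sigma=\lambda}a_{j,w_X}e^{K_j\cdot tD_X}\bigr)$ times the distinguished generator of that eigenspace, up to a shift in the exponent produced by the action of $\mu([S^1\times\{*\}])$ in the neck; the identical analysis applies to $\Phi_{Z^\circ}$. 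Since $\mu(\Sigma)$ is self-adjoint up to sign for the Floer pairing, only matching eigenvalues $\lambda=\lambda$ pair nontrivially, and by the isotropy above the only surviving $\lambda$ are $\pm(2g-2)$; summing the two surviving contributions and collecting the exponential into the prefactor $\exp\bigl(Q_W(tD)/2\bigr)$ yields the stated formula, with the $\pm 2\Sigma\cdot D$ correction in the exponents coming from the gluing region.

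The main obstacle is the precise universal constant $-2^{7g-9}$ together with the sign $(-1)^g$ on the $\lambda=-(2g-2)$ summand. Extracting these requires the exact value of the Floer pairing on the one-dimensional extreme eigenspaces of $HF^*(\Sigma\times S^1)$, an exact normalization of the distinguished relative invariant of $D^2\times\Sigma$ used to read off $\Phi_{X^\circ}$ and $\Phi_{Z^\circ}$, and a careful bookkeeping of the orientation conventions relating $HF^*(Y)$, $HF^*(-Y)$, and the induced homology orientation on $W$. It is this chain of explicit computations in the Floer homology of $\Sigma\times S^1$, rather than any conceptual difficulty in the gluing itself, that is the technical heart of the proof.
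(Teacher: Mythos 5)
This statement is quoted verbatim from Mu\~noz \cite{Munoz-gluing}; the paper gives no proof of it and uses it as a black box, so there is no internal argument to compare yours against. Judged on its own terms, your sketch does reproduce the architecture of Mu\~noz's actual proof: write $W=X^\circ\cup_Y Z^\circ$ with $Y=\Sigma\times S^1$, express $\cald^{w_W}_W(tD)$ as a Fukaya--Floer pairing of the relative invariants of the two pieces, decompose under the commuting actions of $\mu(\Sigma)$ and $\mu(\mathrm{pt})$, identify the eigencomponents of the relative invariants with the groups of basic classes having prescribed $K\cdot\Sigma$, and observe that only the extremal eigenvalues survive the pairing. The compatibility condition on $(w_W,w_X,w_Z)$ and the induced homology orientation playing the role you assign them is also correct.

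That said, the two steps you flag as ``facts'' are not side issues --- they are the entire mathematical content of the theorem, and your proposal supplies no argument for either. The assertion that the eigenspaces with $|\lambda|<2g-2$ are isotropic for the pairing (equivalently, that non-extremal basic classes of $X$ and $Z$ contribute nothing to $\cald^{w_W}_W$) is not a formal consequence of self-adjointness of $\mu(\Sigma)$: matching non-extremal eigencomponents would a priori pair nontrivially, and ruling this out requires Mu\~noz's explicit determination of the effective Fukaya--Floer homology of $\Sigma\times S^1$ via the quantum cohomology of the moduli space of odd-degree rank-$2$ bundles on $\Sigma$, together with an analysis of the subspace actually spanned by relative invariants of simple-type manifolds with $b^+>1$. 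Likewise the universal constant $2^{7g-9}$, the overall minus sign, and the $(-1)^g$ on the $\lambda=-(2g-2)$ summand are exactly what elevate the statement from a qualitative gluing principle to a usable formula, and you explicitly defer their computation. So the proposal is a faithful outline of the strategy behind the cited theorem, but with the technical heart left as an appeal to the very result being proved; it should be regarded as a correct reading of where the theorem comes from rather than as a proof.
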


\begin{proof}[Proof of Theorem \ref{thm:lf-nonzero} for $g\geq \ming$]
Let $w_X = w$ be a cohomology class with $w\cdot \Sigma$ odd.  Recall from Theorem \ref{thm:symplectic-embedding} that the restriction map $H^2(W;\zz) \to H^2(X^\circ;\zz)$ is surjective, so we can lift $w_X|_{X^\circ}$ to a class $w_W \in H^2(W;\zz)$.  This in turn provides us with a class in $H^2(Z^\circ;\zz)$ by restriction, and this class can be extended across $N(\Sigma)\subset Z$ to an element $w_Z \in H^2(Z;\zz)$ because its restriction to $\partial N(\Sigma)$ agrees with $w_X|_{\partial N(\Sigma)}$, which can itself be extended across $N(\Sigma)$.  Clearly $w_W,w_X,w_Z$ are compatible, except that the congruence $w_W^2 \equiv w_X^2 + w_Z^2$ may only be satisfied mod $2$ rather than mod $4$.  According to \cite[Remark 10]{Munoz-gluing}, this only changes the formula of Theorem \ref{thm:munoz-gluing} by a sign, namely $\epsilon=(-1)^{(g-1)(w_W^2-w_X^2-w_Z^2)/2}$, and since we only wish to prove a nonvanishing result we can ignore this.

Let $D_X$ and $D_Z$ be represented by a generic fiber $\Sigma \subset X^\circ$ and a tight genus 2 surface $T \subset Z^\circ$, respectively, and $D=[\Sigma]+[T]$.  Then $\Sigma \cdot \Sigma = 0$ and $D\cdot D = T\cdot T = 2$, and $K\cdot T = 0$ for every basic class $K$ of $W$ by the adjunction inequality, so if $h \in H_2(W;\zz)$ is the class of a generic fiber then $\cald_W^{w_W}(D) = e\cdot \cald_W^{w_W}(h)$ by Theorem \ref{thm:structure-simple-type}.  By Theorem \ref{thm:munoz-gluing} we have
\[
\frac{\cald_{W}^{w_W}(D)}{e\cdot 2^{7g-9}}=\sum_{K_{j}\cdot\Sigma=L_{k}\cdot\Sigma=2g-2}-a_{j,w_X}b_{k,w_Z}e^{2g-2}+\sum_{K_{j}\cdot\Sigma=L_{k}\cdot\Sigma=-(2g-2)}(-1)^{g}a_{j,w_X}b_{k,w_Z}e^{-(2g-2)}.
\]
By Proposition \ref{prop:donaldson-nonzero}, however, we know that $\cald_{W}^{w_W}(h)$ is an element of $\qq[e^{\pm 1}]$ whose coefficient of $e^{2g-2}$ is $\pm c(W)e^{2g-2}$, and so by equating coefficients we get (up to a sign)
\[ \frac{\pm c(W)}{2^{7g-9}} = \sum_{K_j \cdot \Sigma = L_k \cdot \Sigma = 2g-2} a_{j,w_X} b_{k,w_Z} =
   \left(\sum_{K_j \cdot \Sigma = 2g-2} a_{j,w_X}\right) \left(\sum_{L_k \cdot \Sigma = 2g-2} b_{k,w_Z}\right) \]
which is nonzero.  The two factors in parentheses are exactly the $e^{2g-2}$--coefficients of $\cald_X^{w_X}(h)$ and $\cald_Z^{w_Z}(h)$, so both of these coefficients are nonzero as well.  Therefore the series $\cald_X^{w_X}(h)$ is nonzero, with leading term of order $e^{2g-2}$, and the subset $\{K\mid K\cdot\Sigma = 2g-2\}$ of the basic classes of $X$ is nonempty, as desired.
\end{proof}

%
%

\section{Proof of Theorem \ref{thm:nonvanishing}}
\label{sec:proof-nonvanishing}

Let $(X,\omega)$ and $w\in H^2(X;\zz)$ satisfy the hypotheses of Theorem \ref{thm:nonvanishing}, namely that $X$ has simple type, $b_1(X)=0$, $b^+(X) > 1$, and the class $[\omega]$ is integral.  Donaldson \cite{Donaldson-pencils} proved that for any sufficiently large integer $k$, there is a Lefschetz pencil on $X$ whose generic fibers $\Sigma$ are symplectic submanifolds Poincar{\'e} dual to $k[\omega]$.  If this pencil has reducible singular fibers, then for any such fiber with components $F_i \cup G_i$ we know that $F_i$ is symplectic and so $\Sigma\cdot F_i = k([\omega]\cdot F_i) > 0$, hence $F_i$ contains some base points of the pencil and likewise for $G_i$.

The adjunction formula applied to the fiber $\Sigma$ tells us that if $g$ is the fiber genus, then $2g-2=k^{2}[\omega]^{2} +k(K_X \cdot[\omega])$, and so by taking $k$ large we can insist that $g\geq\ming$. If $\Sigma^{2}=n$, then we can blow up $X$ at the $n>0$ base points of the pencil, and the manifold 
\[ \tilde{X}=X\#n\cpbar \]
will admit a genus $g$ Lefschetz fibration with $n$ sections $E_i$ of self-intersection $-1$, some of which intersect any component of any fiber, and with generic fiber $\tilde{\Sigma}=\Sigma - (E_1+\dots+E_n)$ the proper transform of $\Sigma$.  The $g\geq \ming$ case of Theorem \ref{thm:lf-nonzero} says that $\cald^w_{\tilde{X}}([\tilde{\Sigma}])$ is nonzero for some $w$, hence $\cald^w_X$ is not identically zero by the blowup formula.

Now we know that $\tilde{X}$ has at least one basic class $\tilde{K}$ for which $\tilde{K}\cdot \tilde{\Sigma} = 2g-2$, hence Proposition \ref{prop:lf-nonminimal-uniqueness} ensures that $K_{\tilde{X}}$ is the only such class.  Furthermore, the class $K_{\tilde{X}}$ can be uniquely written as $K + \sum \sigma_i PD(E_i)$, where $K$ is a basic class on $X$ and $\sigma_i = \pm 1$ for each $i$.  Since $K_{\tilde{X}} = K_X + \sum PD(E_i)$, we conclude that $K_X$ is a basic class of $X$.

Finally, suppose that $K$ is some basic class on $X$ for which $K\cdot[\omega] = K_X \cdot [\omega]$, or equivalently $K\cdot[\Sigma] = K_X \cdot [\Sigma]$.  Then $\tilde{K} = K + \sum PD(E_i)$ is a basic class of $\tilde{X}$, and
\[ \tilde{K} \cdot \tilde{\Sigma} = K\cdot\Sigma + n = K_X \cdot \Sigma + n = 2g-2 \]
by the adjunction formula, so it follows that $\tilde{K} = K_{\tilde{X}}$ and then $K = K_X$.  Likewise, if $K\cdot[\omega] = -K_X \cdot [\omega]$ then the basic class $-K$ must be $K_X$, and so $|K\cdot[\omega]| = K_X\cdot [\omega]$ if and only if $K=\pm K_X$.

We conclude that if $h$ is Poincar{\'e} dual to $[\omega]$ and $K_X \cdot [\omega] > 0$, then the Donaldson series
\[ \cald^w_X(h) = \exp\left(\frac{Q(h)}{2}\right) \sum_{r=1}^s (-1)^{(w^2 + K_r\cdot w)/2} \beta_r e^{K_r\cdot h} \]
has exactly one nonzero highest-order term, namely the one where $K_r = K_X$.  It follows that $\cald^w_X(h)$ is nonzero.

If instead $K_X \cdot [\omega] = 0$, then we know that $\pm K_X$ are the only basic classes.  If $K_X$ is nonzero then according to Taubes \cite{Taubes-SWtoGR} it is Poincar{\'e} dual to a nonempty, embedded symplectic curve $S$ (note that this requires $b^+(X) > 1$), hence $K_X \cdot [\omega] = \int_S \omega > 0$ which is a contradiction.  Therefore $K_X = 0$ and we have $\cald^w_X = c\cdot (-1)^{w^2/2}e^{Q/2}$ for some rational $c$.  Since $\cald^w_X$ is not identically zero, we must have $c\neq 0$ and therefore $\cald^w_X(h) \neq 0$ as desired.

\begin{remark}
The condition $K_X \cdot [\omega] = 0$ with $[\omega]$ integral actually forces $X$ to have simple type: if $\Sigma$ is an embedded symplectic surface which is Poincar{\'e} dual to $k[\omega]$ for $k$ large, then $g(\Sigma) \geq 2$ and the adjunction formula says that $\Sigma\cdot \Sigma = 2g(\Sigma)-2$, so $\Sigma$ is a tight surface.
\end{remark}

%
%

\section{Proof of Theorem \ref{thm:lf-nonzero}}
\label{sec:proof-lf-nonzero}

We begin with some observations about the geography of symplectic $4$-manifolds.

\begin{lemma}
For $n\geq 2$, the elliptic surface $E(n)$ contains a tight surface of genus $2$ which is disjoint from both a generic elliptic fiber and a section of self-intersection $-n$.
\end{lemma}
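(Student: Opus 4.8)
The plan is to build the genus 2 tight surface inside $E(n)$ out of pieces that are already present in the standard elliptic fibration: a torus of square $0$ (a copy of a smooth elliptic fiber), together with a rational curve along which the square can be increased. Recall that $E(n) \to S^2$ admits a section $S$ of square $-n$, and that $E(n)$ is the fiber sum of $n$ copies of $E(1) = \cpbar \# 9\cpbar$; in particular for $n \geq 2$ it contains a cusp fiber and, more usefully, we may arrange the fibration to have a node in a fiber away from the section, so that a vanishing-cycle/matching-path construction is available. First I would locate a Lagrangian (or symplectic, after perturbation) sphere $L$ of square $-2$ supported over a matching path joining two critical values with the same vanishing cycle, exactly as in the proof of Lemma~\ref{lem:tight-fibration}; such a sphere exists in $E(n)$ for $n \geq 2$ because the monodromy factorization contains repeated Dehn twists. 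I would then choose this matching path $\alpha$ and the auxiliary separating curve $\gamma \subset S^2$ to lie in a disk disjoint from both the critical value corresponding to the part of the fibration containing $S$ and a fixed regular fiber $F_0$, and I would take the section $S$ of square $-n$ to be the one not meeting these chosen fibers; concretely, one can isotope $S$ to pass through regular points over $S^2$ and choose its representative to avoid the small neighborhood $N(\alpha \cup \gamma)$.

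Next I would run the construction of Lemma~\ref{lem:tight-fibration} verbatim inside $E(n)$: over $\gamma$ the monodromy is trivial so $f^{-1}(\gamma) \cong S^1 \times \Sigma$ (here $\Sigma$ is a torus), and a curve $c' \subset \Sigma$ meeting the vanishing cycle $c$ once yields a torus $T = S^1 \times c'$ of square $0$ meeting $L$ transversely in one point. Smoothing the single intersection point of $L$ with two parallel copies of $T$ (as in \cite[Corollary 8.5]{KM-embedded}) produces an embedded surface $S'$ with $[S']^2 = L^2 + 2(2)\,(L\cdot T) = -2 + 4 = 2$ and genus $g(S') = 0 + 1 + 1 = 2$, so $[S']^2 = 2g(S') - 2 > 0$ and $S'$ is tight. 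By construction $S'$ lies over $N(\alpha \cup \gamma)$, hence is disjoint from every fiber over a point outside that neighborhood — in particular from the generic fiber $F_0$ chosen above.

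The remaining point, which I expect to be the only genuine obstacle, is to verify disjointness from the section of square $-n$. The section $S$ meets every fiber once, so it cannot be made disjoint from $f^{-1}(N(\alpha\cup\gamma))$ globally; instead I would argue as follows. The tori $T$ and their parallel copies live in $f^{-1}(\gamma) \cong S^1 \times \Sigma$, and within a single fiber $\Sigma$ they occupy only the annular neighborhood of the chosen curve $c'$, while $S$ meets that fiber in a single point which we are free to place in the complementary region $\Sigma \setminus N(c')$; similarly the Lagrangian sphere $L$ projects to $\alpha$, and over each point of $\alpha$ it sits in a neighborhood of the vanishing cycle $c \subset \Sigma$, again missing the point of $S$ once we fix $S$ to avoid a neighborhood of $c \cup c'$ fiberwise. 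Since $c$ and $c'$ intersect once, $N(c) \cup N(c')$ is a proper subsurface of the torus $\Sigma$, so its complement is nonempty and we may isotope $S$ (rel nothing) to meet each relevant fiber in that complement; this isotopy can be taken symplectic, keeping $S$ a symplectic section of square $-n$. After this isotopy $S$ is disjoint from $L$, from all the copies of $T$, and hence from the smoothed surface $S'$. Thus $S'$ is a genus 2 tight surface disjoint from both a generic elliptic fiber and a section of self-intersection $-n$, as claimed.
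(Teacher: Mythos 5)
Your proposal is correct and follows essentially the same route as the paper: the paper reduces to $n=2$, views $E(2)$ as $E(1)\#_{T^2}E(1)$, and applies the construction of Lemma~\ref{lem:tight-fibration}, which is exactly the matching-path-plus-tori smoothing you carry out (note that it is the fiber-sum decomposition of $E(n)$ into copies of $E(1)$ that guarantees the monodromy along your curve $\gamma$ is trivial, so that hypothesis should be made explicit rather than just choosing $\gamma$ to meet $\alpha$ once). Your fiberwise argument for pushing the $(-n)$--section off a neighborhood of $c\cup c'$ is a welcome addition, since the paper leaves that disjointness implicit.
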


\begin{proof}
It suffices to prove this for $n=2$, since for larger $n$ we know that $E(n)$ is a fiber sum of $E(2)$ and $E(n-2)$ and the $(-n)$--section is obtained by stitching together a $(-2)$--section and a $(-(n-2))$--section of the respective fibrations.  Since $E(2)$ is also a fiber sum $E(1) \#_{T^2} E(1)$, with $(-2)$--section obtained by gluing together $(-1)$--sections of each $E(1)\cong \cc P^2 \# 9\cpbar$, we can construct the desired tight surface just as in Lemma \ref{lem:tight-fibration}.
\end{proof}

\begin{proposition}
\label{prop:witten-conjecture-manifolds}
For any fixed $r < \frac{11}{3}$, there are simply connected, spin, symplectic manifolds $X$ with $\frac{b^-(X)}{b^+(X)} > r$ and $b^+(X)$ arbitrarily large such that:
\begin{enumerate}
\item The only Seiberg--Witten basic classes of $X$ are $\pm K_X$, and $SW(\pm K_X) = 1$.
\item The Donaldson invariants of $X$ are not identically zero.
\item $X$ contains a tight surface and a symplectic surface $S$ with $S\cdot S=0$ and $K_X\cdot S \neq 0$.
\end{enumerate}
\end{proposition}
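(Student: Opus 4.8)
The plan is to realize the required manifolds as double branched covers of Hirzebruch surfaces carrying genus $3$ fibrations of minimal slope, and then to pass to a fiber sum with themselves in order to manufacture a tight surface. I would start with the double cover $\pi\colon Y\to\Sigma_e$ of a Hirzebruch surface, with negative section $\Delta_0$ and ruling fiber $f$, branched over a smooth curve $B\in|8\Delta_0+2af|$ where $a$ is large and $a\equiv e\pmod 2$. Composing $\pi$ with the ruling makes $Y$ a genus $3$ hyperelliptic fibration over $S^2$, since a fiber of $Y$ is the double cover of $\mathbb{P}^1$ branched at the $8$ points $B\cap f$. Writing $B=2L$ with $L=4\Delta_0+af$, we have $K_Y=\pi^*(K_{\Sigma_e}+L)=\pi^*\bigl(2\Delta_0+(a-e-2)f\bigr)$; the parity condition makes this class two-divisible, so $Y$ is spin, while for $a$ large $2\Delta_0+(a-e-2)f$ is nef and big (so $Y$ is a minimal surface of general type) and $B$ is positive enough that $Y$ is simply connected. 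A short computation using $\sigma=c_1^2-8\chi_h$ gives $K_f^2/\deg f_*\omega_f=\tfrac83$ for every such $Y$ and $b^-(Y)/b^+(Y)=\tfrac{22(a-2e)-5}{6(a-2e)-5}$, which exceeds $\tfrac{11}{3}$ and tends to it as $b^+(Y)\to\infty$. (It is the spin condition that forces the fiber genus to be odd, hence at least $3$: a genus $2$ version would need $L\cdot f=3$, so $K_{\Sigma_e}+L$ would have odd $\Delta_0$-coefficient and could not be two-divisible. This is where the bound $\tfrac{11}{3}$ comes from.)

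Next I would set $X=Y\#_\Sigma Y$, realized algebraically as the fiber product $Y\times_{\mathbb{P}^1}\mathbb{P}^1$ along a degree $2$ map $\mathbb{P}^1\to\mathbb{P}^1$ branched at two points that are not critical values of $Y\to S^2$; avoiding these critical values makes the fiber product a smooth projective surface with an induced genus $3$ fibration over $S^2$. It is again simply connected, since the vanishing cycles of $Y$ generate $H_1(\Sigma)$; it is spin, because as a double cover of $Y$ branched over two fibers $F_0+F_\infty\sim 2\Sigma$ it has $K_X=\pi^*(K_Y+\Sigma)=\pi^*K_Y+2F'$ with $F'$ a fiber of $X$ and $\pi^*\Sigma=2F'$ two-divisible; it is minimal of general type, since $K_X$ is the pullback of a nef and big class plus an effective one; and it has the same slope $\tfrac83$, so $b^-(X)/b^+(X)>\tfrac{11}{3}$ with $b^+(X)$ arbitrarily large, while $\chi_h(X)=2\chi_h(Y)+2$ is even.

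Property (1) then holds because $X$ is a minimal K\"ahler surface of general type with $b^+>1$: by the computation of the Seiberg--Witten invariants of K\"ahler surfaces (\cite{Witten} and the ensuing literature) its only basic classes are $\pm K_X$ and $SW(\pm K_X)=1$. Property (2) holds because $X$ is simply connected and projective with $b^+>1$, so its Donaldson invariants are not identically zero by \cite{Donaldson-polynomial}. For property (3), a generic fiber $S$ of the fibration is a symplectic surface with $S^2=0$ and $K_X\cdot S=2g(S)-2=4\neq 0$ by the adjunction formula; and since $X=Y\#_\Sigma Y$, two critical values carrying the same vanishing cycle are joined by a matching path, so $X$ contains a Lagrangian $(-2)$-sphere which, together with two parallel copies of a square-zero torus, yields a tight genus $2$ surface exactly as in the proof of Lemma \ref{lem:tight-fibration} (following \cite[Corollary 8.5]{KM-embedded}).

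The main thing to be careful about is the geography bookkeeping: one must produce infinitely many $Y$ meeting the parity condition (for spin), the positivity conditions (for simple connectivity and minimal general type), and the slope asymptotics (for $b^-/b^+\to\tfrac{11}{3}$) simultaneously, and then check that $Y\times_{\mathbb{P}^1}\mathbb{P}^1$ really is smooth, simply connected, spin, and of the stated canonical class and topological type. None of these steps is deep, but together they are where the content lies; everything else reduces to quoting standard facts about K\"ahler surfaces and reusing Lemma \ref{lem:tight-fibration}. The restriction $r<\tfrac{11}{3}$ is unavoidable for this approach precisely because the slope inequality for genus $3$ fibrations pins $b^-/b^+$ down to $\tfrac{11}{3}$ in the limit, and the spin hypothesis rules out the genus $2$ analogue that would otherwise reach $4$.
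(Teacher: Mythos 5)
Your proposal is correct, but it reaches the proposition by a genuinely different route from the paper. The paper takes the Fintushel--Stern manifolds $Z_K$ obtained by knot surgery on a fiber of $E(2n)$ followed by a fiber sum, and tunes the geography via $c_1^2 = 8(g+n-1)$, $\chi_h = 3n+g-1$, so that the coefficient $6r-22$ of $n$ in $c_1^2+(2r-10)\chi_h$ is negative exactly when $r < \tfrac{11}{3}$; property (1) then comes from Taubes applied to the surgered symplectic manifold, property (2) from Theorem \ref{thm:nonvanishing} (legitimate, since that theorem rests only on the $g\geq 8$ case of Theorem \ref{thm:lf-nonzero}), and the tight surface is carried along from a $(-2n)$--section and parallel fibers of $E(2n)$. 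You instead stay entirely inside K\"ahler geometry: genus-$3$ hyperelliptic fibrations of minimal slope $\tfrac{8}{3}$ on double covers of Hirzebruch surfaces, self-fiber-summed via a degree-$2$ base change. Your geography checks out (I verified $\chi_h(Y)=3(a-2e)-2$, $K_Y^2=8(a-2e)-16$, hence $b^-/b^+=\frac{22(a-2e)-5}{6(a-2e)-5}>\tfrac{11}{3}$, preserved under the self-fiber-sum, with $\chi_h(X)$ even), the spin condition from $a\equiv e \pmod 2$ is right, and the tight surface comes from Lemma \ref{lem:tight-fibration} exactly as in the paper. What your approach buys is that (1) and (2) follow from classical results --- Witten's computation for minimal surfaces of general type and Donaldson's original nonvanishing theorem for projective surfaces with $p_g>0$ --- so the logical dependence on the rest of the paper disappears entirely; what it costs is the algebro-geometric bookkeeping you flag (genericity of $B$ so the fibration is Lefschetz with irreducible nodal fibers, simple connectivity via Nori since $B^2>0$, smoothness of the fiber product away from critical values), all of which is standard. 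It is a pleasant coincidence that both routes hit the same barrier $\tfrac{11}{3}$, yours via the slope inequality for genus-$3$ fibrations and the paper's via the sign of $6r-22$.
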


\begin{proof}
Using the fact that $b^\pm(X) = \frac{e(X) \pm \sigma(X)}{2} - 1$ whenever $b_1(X)=0$, the condition $\frac{b^-}{b^+} > r$ is equivalent to $(r-1)e + (r+1)\sigma < 2(r-1)$.  In terms of the invariants $c_1^2 = 3\sigma+2e$ and $\chi_h = \frac{e+\sigma}{4}$, then, this is also equivalent to 
\[ c_1^2 + (2r-10)\chi_h < r-1. \]

Let $K \subset S^3$ be a knot with Seifert genus $g$, and suppose that the Alexander polynomial $\Delta_K(t)$ has degree $g$.  Fintushel and Stern \cite{FS-nonsymplectic} construct a simply connected, spin $4$-manifold $Z_K$ with Seiberg--Witten simple type and exactly one basic class $\kappa$ up to sign.

The first step in the construction of $Z_K$ is to perform knot surgery \cite{FS-knots} along a generic fiber $T$ of the elliptic fibration on $E(2n)$ to get a manifold $E(2n)_K$.  Inside $E(2n)_K$, there is a genus $g$ surface $S'$ of self-intersection $-2n$, such that $S' \cap (E(2n)\backslash N(T))$ is contained in a $(-2n)$--section of the fibration on $E(2n)$.  Therefore we have a tight genus 2 surface $F \subset E(2n)_K$ which is disjoint from both $S'$ and another generic fiber $T' \subset E(2n)\backslash N(T) \subset E(2n)_K$, hence from the surface $\Sigma'$ of genus $g+n$ and self-intersection $0$ formed by smoothing out $S' + nT'$.  Then $Z_K$ is the fiber sum $E(2n)_K \#_{\Sigma' = C} Y$ for an appropriate symplectic $Y$ with embedded symplectic surface $C$, and $F$ remains a tight genus 2 surface inside $Z_K$.

Now suppose that $K$ is fibered and has genus $g$.  Then the surgered manifold $E(2n)_K$ is symplectic as well, and so $X = Z_K$ admits a symplectic structure.  It follows from Taubes \cite{Taubes-symplectic} that $\kappa$ must be the canonical class, i.e.\ that $SW(\pm K_X) = \pm 1$ and that there are no other basic classes; if we can ensure that $b^+(X)\equiv 3\pmod{4}$, or equivalently that $\chi_h(X)$ is even, then we will have $SW(\pm K_X) = 1$ for the canonical homology orientation on $X$.  The Donaldson invariants of $X$ are nonzero by Theorem \ref{thm:nonvanishing}, and $X$ contains the tight surface $F$ and the symplectic surface $S$ obtained as a parallel copy of $C\subset Y$; then $S^2 = 0$ and $K_X\cdot S = 2g(S)-2$ is nonzero because $g(S) = g+n \geq 2$.

It only remains to determine the invariants $c_1^2(X)$ and $\chi_h(X)$.  According to \cite{FS-nonsymplectic}, this construction yields $c_1^2 = 8(g+n-1)$ and $\chi_h = 3n+g-1$, and so
\[ c_1^2 + (2r-10)\chi_h = (6r - 22)n + (2r-2)(g-1).\]
For $r < \frac{11}{3}$, the coefficient of $n$ is negative, and so for fixed $g$ and any large enough $n$ we will have $c_1^2 + (2r-10)\chi_h < r-1$.  If we also insist that $n+g$ be odd then $\chi_h=\frac{b^+(X)+1}{2}$ will be even, and as $n$ goes to infinity so does $b^+(X)$, as desired.
\end{proof}

We now complete the proof of Theorem \ref{thm:lf-nonzero}.  Let $X$ be a manifold of Donaldson simple type with $b_1(X) = 0$ and $b^+(X) > 1$ odd, and suppose that we have a relatively minimal Lefschetz fibration $X\to S^2$ of genus $g\geq 2$.  By repeating the arguments of Section \ref{sec:fiber-sum}, we can find a relatively minimal genus $g$ Lefschetz fibration $Z\to S^2$ so that the fiber sum $W = X\#_{\Sigma} Z$ has $H_1(W;\zz)=0$, the map $H^2(W;\zz) \to H^2(X^\circ; \zz)$ is surjective, $W$ and $Z$ both contain a tight genus $2$ surface, and we have a manifold $W_*$ as in Proposition \ref{prop:witten-conjecture-manifolds} such that $b^-(W) < b^-(W_*)$ and $b^+(W) = b^+(W_*)$.

To check that we can find such a $W_*$, note that Lemma \ref{lem:construct-fibrations} still provides Lefschetz fibrations $V_1$ and $V_2$ with $\frac{n^-(V_1)}{n^+(V_1)} \leq 3$ and $\gcd(n^+(V_1),n^+(V_2)) = 2$ even when $2\leq g < 8$, and so there is some $N_0$ such that for any odd $N\geq N_0$ we can achieve $b^+(W)=N$ and $\frac{b^-(W)}{b^+(W)} < \frac{7}{2}$.  Now applying Proposition \ref{prop:witten-conjecture-manifolds} with $r=\frac{7}{2}$ guarantees that we can take $W_*$ to have odd $b^+(W_*) > N_0$ and $\frac{b^-(W_*)}{b^+(W_*)} > \frac{7}{2}$ and so obtain the desired $W_*$.

It now follows from Theorem \ref{thm:witten-conjecture} and Remark \ref{rem:witten-conjecture-general}, together with Proposition \ref{prop:witten-blowdown}, that the fiber sum $W=X\#_{\Sigma} Z$ satisfies Conjecture \ref{con:witten-conjecture}.  We proceed exactly as in Section \ref{sec:donaldson-fiber} to conclude that if $w\cdot\Sigma$ is odd, then $\cald^w_X([\Sigma])$ is nonzero with a leading term of order $e^{2g-2}$.

\begin{remark}
\label{rem:lf-poly-growth}
By Theorem \ref{thm:structure-simple-type}, we have proved that for any Lefschetz fibration $X\to S^2$ with fiber class $h=[\Sigma]$ and $w\in H^2(X;\zz)$ satisfying the conditions of Theorem \ref{thm:lf-nonzero}, there is a constant $c\neq 0$ such that $D^w_X(h^d)$ is asymptotic to $c\cdot(2g-2)^d$ for large $d\equiv -w^2 -\frac{3}{2}(b^+(X)+1) \pmod{4}$.  In Donaldson's original notation, this says that $q_{k,X}(h,\dots,h) \sim c'\cdot(2g-2)^{4k}$ for some nonzero $c'$ and all sufficiently large $k$, cf.\ \cite[Theorem C]{Donaldson-polynomial}.
\end{remark}

%
%

\section{Lefschetz fibrations which do not have simple type}
\label{sec:lf-nonsimple}

In this section we use our previous results to study Lefschetz fibrations of genus at least 2 which do not have simple type, and thus prove a nonvanishing theorem for Donaldson invariants of symplectic manifolds in general.  We recall that according to Witten's conjecture, all symplectic $4$-manifolds should have simple type, which would render this section unnecessary.

To $4$-manifolds $X$ with boundary $Y$ and classes $w\in H^2(X;\zz)$ for which the instanton Floer homology group $I_*(Y)_w$ is well-defined (as in \cite{Donaldson-book}, though we follow Kronheimer and Mrowka's notation from \cite{KM-excision} and also confuse $w|_Y$ with the Hermitian line bundle over $Y$ having first Chern class $w$), one can often assign relative Donaldson invariants which satisfy nice gluing theorems \cite{Donaldson-book, BD-gluing}.  In general these associate to elements of $H_2(X,Y)$ with image $\gamma$ in $H_1(Y)$ an element of the Fukaya--Floer homology group $HFF(Y,\gamma)$ \cite{Fukaya}, but in the cases where $\gamma=0$ the relative invariants form a map $\phi^w_X: \mathbb{A}(X) \to I_*(Y)_w$ with values in the ordinary instanton Floer homology of $Y$.

\begin{lemma}
\label{lem:embed-lf-disk}
Let $X\to D^2$ be a Lefschetz fibration of genus at least 2 over the disk.  Then we can extend $X \to D^2$ to a Lefschetz fibration $W \to S^2$ such that $H_1(W;\zz) = 0$, $b^+(W) > 1$, the map $H^2(W)\to H^2(X)$ is surjective, and $W$ has Donaldson simple type, and if $X$ is relatively minimal then so is $W$.
\end{lemma}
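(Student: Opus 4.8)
The plan is to extend $X\to D^2$ in two stages: first cap it off to \emph{some} closed Lefschetz fibration over $S^2$, and then alter only the cap --- by fiber sums that leave $X$ untouched --- until every listed property holds. For the first stage, note that the boundary $\partial X$ is the mapping torus of the total monodromy $\phi=t_{c_1}\cdots t_{c_n}$, the product of the right-handed Dehn twists about the vanishing cycles of $X\to D^2$. A standard generating set $d_1,\dots,d_{2g}$ of $H_1(\Sigma;\zz)$ may be chosen to consist of nonseparating simple closed curves, and every element of $\mathrm{Mod}(\Sigma_g)$ with $g\ge 2$ is a product of right-handed Dehn twists about nonseparating curves; applying the latter to $(t_{d_1}\cdots t_{d_{2g}})^{-1}\phi^{-1}$ gives $\phi^{-1}=t_{d_1}\cdots t_{d_{2g}}\,t_{c_{n+1}}\cdots t_{c_N}$ with $c_{n+1},\dots,c_N$ nonseparating. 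Let $X'$ be the Lefschetz fibration over the complementary disk with vanishing cycles $d_1,\dots,d_{2g},c_{n+1},\dots,c_N$. Then $Y=X\cup_\partial X'\to S^2$ is a closed Lefschetz fibration extending $X\to D^2$; it is relatively minimal whenever $X$ is, since the new vanishing cycles are nonseparating, hence essential; and $H_1(X')=0$ because the $d_j$ already generate $H_1(\Sigma)$. Therefore $H_1(Y)=0$, and since $H^3(Y,X)\cong H^3(X',\partial X')\cong H_1(X')=0$ by excision and Poincar{\'e}--Lefschetz duality, the long exact sequence of $(Y,X)$ shows $H^2(Y)\to H^2(X)$ is onto.

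For the second stage, use the same facts about $\mathrm{Mod}(\Sigma_g)$ to fix a relatively minimal genus $g$ Lefschetz fibration $V\to S^2$ whose vanishing cycles generate $H_1(\Sigma)$ (so $H_1(V)=0$), exactly as at the start of Section \ref{sec:fiber-sum}, and set $Z=V\#_\Sigma V$: this $Z$ is relatively minimal with $H_1(Z)=0$, and by Lemma \ref{lem:tight-fibration} it contains a tight surface $T$ of genus $2$ disjoint from some regular fiber $\Sigma_Z\subset Z$. Now form the fiber sum $W=Y\#_\Sigma Z$, gluing $Z\setminus N(\Sigma_Z)$ to $Y$ with a neighborhood of a regular fiber lying over the cap disk removed, so that $X\subset Y$ is untouched and $W\to S^2$ still extends $X\to D^2$. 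Then $W$ is relatively minimal whenever $X$ is, being a fiber sum of relatively minimal fibrations; the vanishing cycles of $W$ still include $d_1,\dots,d_{2g}$, so $H_1(W)=0$; the complement of $X$ in $W$ is a Lefschetz fibration over a disk whose vanishing cycles again include $d_1,\dots,d_{2g}$, so it has trivial first homology and $H^2(W)\to H^2(X)$ remains onto by the Stage 1 argument; by the fiber-sum computation of Lemma \ref{lem:fiber-sum-b_pm}, $b^+(W)=b^+(Y)+b^+(Z)-b_1(Z)+2g-1\ge 2g>1$, and $b^+(W)$ is odd, being the $b^+$ of a symplectic $4$-manifold with $b_1=0$ (see Section \ref{sec:fiber-sum}); and finally $T\subset Z\setminus N(\Sigma_Z)\subset W$ is a tight surface, so $W$ has Donaldson simple type by Proposition \ref{pro:tight-implies-simple}. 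Additional fiber sums with copies of $Z$ over further cap fibers make $b^+(W)$ as large as desired.

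The delicate part is the homological bookkeeping of Stage 1: every requirement on $W$ other than Donaldson simple type is engineered through the choice of vanishing cycles of the cap, and the least automatic of them --- surjectivity of $H^2(W)\to H^2(X)$ --- reduces, via excision and Poincar{\'e}--Lefschetz duality for the pair $(W,X)$, to the statement that the cap has trivial first homology, which is exactly why the new vanishing cycles are forced to contain a generating set of $H_1(\Sigma)$. With that in hand, Donaldson simple type is inherited for free from Lemma \ref{lem:tight-fibration} and Proposition \ref{pro:tight-implies-simple}, and the parity of $b^+(W)$ is dictated by that of any symplectic $4$-manifold with vanishing first Betti number.
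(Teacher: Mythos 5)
Your proof is correct and follows essentially the same route as the paper: cap off $X\to D^2$ with a fibration over the complementary disk whose nonseparating vanishing cycles generate $H_1(\Sigma)$, deduce surjectivity of $H^2(W)\to H^2(X)$ from $H^3(W,X)\cong H_1(\overline{W\setminus X})=0$ via excision and Poincar{\'e}--Lefschetz duality, and obtain $b^+>1$ and a tight genus-$2$ surface (hence simple type) from Lemmas \ref{lem:fiber-sum-b_pm} and \ref{lem:tight-fibration}. The only cosmetic difference is that the paper fiber-sums $W_0=X\cup_Y Z_0$ with itself, writing $W=X\cup_Y(Z_0\#_\Sigma W_0)$, whereas you fiber-sum with an auxiliary $Z=V\#_\Sigma V$; both work for the same reasons.
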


\begin{proof}
Let $\Sigma$ denote the generic fiber of $X$ and let $Y=\partial X$, and construct a Lefschetz fibration $Z_0\to D^2$ with fiber $\Sigma$ and boundary $-Y$ such that the vanishing cycles of $Z_0$ are all nonseparating and generate $H_1(\Sigma;\zz)$.  Then we take $W_0 = X \cup_Y Z_0$ and $W = W_0 \#_{\Sigma} W_0 = X \cup_Y (Z_0 \#_{\Sigma} W_0).$

Since $H_1(W_0) = 0$, Lemmas \ref{lem:fiber-sum-b_pm} and \ref{lem:tight-fibration} imply that $b_1(W) = 0$, $b^+(W) > 1$, and $W$ has simple type.  Finally, the surjectivity of $H^2(W)\to H^2(X)$ is implied by $H^3(W,X)=0$, which by excision and Poincar{\'e} duality is equivalent to $H_1(\overline{W\backslash X}) = 0$ and this is immediate from $\overline{W\backslash X} = Z_0 \#_{\Sigma} W_0$.
\end{proof}

Given $(Y,w)$ for which $I_*(Y)_w$ is defined and a closed surface $R\subset Y$, there is a natural operator $\mu(R)$ of degree $-2$ on $I_*(Y)_w$, and we can decompose $I_*(Y)_w$ into the generalized eigenspaces $V_\lambda$ of $\mu(R)$; Kronheimer and Mrowka \cite{KM-excision} define the group $I_*(Y|R)_w$ to be $V_{2g-2}$.  In this notation, the goal of the following proposition is to show that the relative invariant $\phi^w_X(1)$ of a sufficiently nice Lefschetz fibration $X\to D^2$ projects to a nonzero element of $I_*(\partial X|\Sigma)_w$, where $\Sigma$ is a generic fiber.

\begin{proposition}
\label{prop:lf-relative-invariant}
Let $X\to D^2$ be a relatively minimal Lefschetz fibration with generic fiber $\Sigma$ of genus $g\geq 2$ and boundary $Y$.  If $w\cdot\Sigma$ is odd, then the relative invariant $\phi^w_X(1) \in I_*(Y)_w$ has nonzero $V_{2g-2}$--component.
\end{proposition}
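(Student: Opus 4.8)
The plan is to reduce to the closed case treated by Theorem~\ref{thm:lf-nonzero}, using the embedding of Lemma~\ref{lem:embed-lf-disk} together with a gluing formula for relative Donaldson invariants and the eigenspace decomposition of $\mu(\Sigma)$ on instanton Floer homology.

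First I would apply Lemma~\ref{lem:embed-lf-disk} to extend $X\to D^2$ to a relatively minimal Lefschetz fibration $W\to S^2$ with $H_1(W;\zz)=0$, $b^+(W)>1$, Donaldson simple type, and with $H^2(W;\zz)\to H^2(X;\zz)$ surjective. Writing $W = X\cup_Y Z$, the complementary piece $Z=\overline{W\setminus X}$ is again a Lefschetz fibration over a disk with the same generic fiber $\Sigma$, and $\partial Z=-Y$. Using surjectivity of the restriction map, choose $w_W\in H^2(W;\zz)$ restricting to $w$ on $X$; then $w_W\cdot\Sigma$ is odd, so $\cald^{w_W}_W$ and the relative invariants $\phi^{w}_X(1)\in I_*(Y)_w$ and $\phi^{w}_Z(1)\in I_*(-Y)_w$ are all defined (here $w=w_W|_X=w_W|_Z$). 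Since $\Sigma^2=0$, the intersection form of $W$ vanishes on $t[\Sigma]$, so Theorem~\ref{thm:structure-simple-type} gives $\cald^{w_W}_W(t[\Sigma])=\sum_r \epsilon_r\beta_r e^{(K_r\cdot\Sigma)t}$, a finite Laurent polynomial in $e^t$; and Theorem~\ref{thm:lf-nonzero} says that its term of top order $e^{(2g-2)t}$ is nonzero, i.e.\ $\sum_{K_r\cdot\Sigma=2g-2}\epsilon_r\beta_r\neq 0$.

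The heart of the argument is then the gluing theorem for relative Donaldson invariants of $W=X\cup_Y Z$ \cite{Donaldson-book, BD-gluing}. After isotoping the fiber $\Sigma$ into the neck $Y$, the class $[\Sigma]$ and the point class act through the operators $\mu(\Sigma)$ and $\mu(x)$ on $I_*(Y)_w$, and the gluing formula identifies
\[ \cald^{w_W}_W(t[\Sigma]) = \big\langle \big(1+\tfrac12\mu(x)\big)\,e^{t\mu(\Sigma)}\,\phi^{w}_X(1),\ \phi^{w}_Z(1)\big\rangle, \]
where $\langle\cdot,\cdot\rangle\colon I_*(Y)_w\otimes I_*(-Y)_w\to\qq$ is the canonical pairing. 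Now decompose $\phi^w_X(1)$ and $\phi^w_Z(1)$ along the generalized eigenspaces of $\mu(\Sigma)$; the eigenvalues lie in the interval $[-(2g-2),2g-2]$ by the Floer-theoretic adjunction bound \cite{KM-excision, Munoz-gluing}, and $\mu(x)$ preserves each eigenspace. Sliding the surface $\Sigma\times\{s\}$ across the cylinder $Y\times[0,1]$ that computes the pairing shows that $\langle\cdot,\cdot\rangle$ vanishes on $V_\lambda\otimes V_{\lambda'}$ unless $\lambda=\lambda'$. Hence the coefficient of $e^{(2g-2)t}$ in the displayed formula, which equals the nonzero number $\sum_{K_r\cdot\Sigma=2g-2}\epsilon_r\beta_r$, is exactly $\big\langle (1+\tfrac12\mu(x))\,\phi^w_X(1)_{2g-2},\ \phi^w_Z(1)_{2g-2}\big\rangle$. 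In particular $\phi^w_X(1)_{2g-2}\neq 0$, i.e.\ $\phi^w_X(1)$ has nonzero $V_{2g-2}$-component, which is the assertion of the proposition.

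The step I expect to be the main obstacle is making the gluing identity fully precise: confirming that inserting $[\Sigma]$ and $x$ in the closed invariant corresponds, under the gluing theorem, to the operators $\mu(\Sigma)$ and $\mu(x)$ acting in the neck (so that the generating-function form $e^{t\mu(\Sigma)}$ is legitimate), and that the Floer-homology pairing is compatible with the $\mu(\Sigma)$-eigenspace decomposition as claimed. One should also note that $\Sigma\subset Y$ is a \emph{closed} surface in the boundary, so ordinary instanton Floer homology suffices and no Fukaya--Floer refinement is needed; and that $W$ having simple type forces $\cald^{w_W}_W(t[\Sigma])$ to have no polynomial-in-$t$ factors, so extracting the $e^{(2g-2)t}$-coefficient genuinely isolates the $V_{2g-2}$-contribution. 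These points should be routine given the cited gluing theory, but they require care.
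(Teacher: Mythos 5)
Your proposal follows essentially the same route as the paper: embed $X\to D^2$ in a closed fibration $W\to S^2$ via Lemma~\ref{lem:embed-lf-disk}, express $D^{w}_W$ through the Floer-theoretic pairing of $\phi^w_X(1)$ with $\phi^w_Z(1)$, use orthogonality of distinct generalized eigenspaces of $\mu(\Sigma)$, and invoke Theorem~\ref{thm:lf-nonzero} to see that the $e^{(2g-2)t}$--coefficient is nonzero. The argument is correct, with one caveat: your assertion that the eigenvalues of $\mu(\Sigma)$ on $I_*(Y)_w$ ``lie in the interval $[-(2g-2),2g-2]$'' is false --- the spectrum is complex, consisting (in the known cases, cf.\ Remark~\ref{rem:mu-spectrum}) of the numbers $i^r(2k)$ with $0\leq r\leq 3$ and $0\leq k\leq g-1$; this is forced by the vanishing of $D^w_W(h^n)$ for $n\not\equiv d_0\pmod 4$ together with the gluing formula. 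Fortunately this misstatement is not load-bearing: extracting the $e^{(2g-2)t}$--coefficient only requires linear independence of the functions $t^je^{\lambda t}$ for distinct complex $\lambda$, plus the matching of constant terms against the Jordan-block polynomial factors, which you address. The paper avoids having to say anything about the spectrum by instead applying an explicit annihilating polynomial $f=f_0f_1$, where $f_1$ is the characteristic polynomial of $\mu(\Sigma)$ with the $(t-(2g-2))$ factors removed and $f_0$ kills all values $i^r(2k)\neq 2g-2$; it then evaluates $D^w_W(f(h))$ using the $x$-free form of the structure theorem (\cite[Equation~1.10]{KM-embedded}), whose imaginary-exponent terms account for exactly the mod-$4$ periodicity that makes the spectrum complex. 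Either bookkeeping works; yours is slightly slicker but should be stated with the correct (complex) spectrum, or with no claim about the spectrum at all.
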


\begin{proof}
We extend $X\to D^2$ to a closed, relatively minimal Lefschetz fibration $W \to S^2$ as in Lemma \ref{lem:embed-lf-disk}, and lift $w\in H^2(X;\zz)$ to a class in $H^2(W;\zz)$ which we also denote by $w$.  Let $Z = \overline{W\backslash X}$, and let $h \in H_2(W;\zz)$ denote the class $[\Sigma]$. Then we have an equation
\[ D_W^w(h^n) = \langle\phi_X^w(h^k), \phi_Z^w(h^{n-k})\rangle \]
for the Poincar{\'e} duality pairing $\langle\cdot,\cdot\rangle: I_*(Y)_w \otimes I_*(-Y)_w \to \cc$ (see \cite[Theorem 6.7]{Donaldson-book}).

Define a polynomial $f_0(t)$ by the formula 
\[ f_0(t) = (t+(2g-2))(t^2+(2g-2)^2)\prod_{k=0}^{g-2} (t^4 - (2k)^4) \]
so that the roots of $f_0(t)$ are precisely all numbers of the form $i^r(2k)$ where $0\leq r\leq 3$ and $0\leq k \leq g-1$, except for $2g-2$.  We also let $f_1(t)$ be the characteristic polynomial of the action of $\mu(\Sigma)$ on $I_*(Y)_w$, divided by $(t-(2g-2))^{\dim(V_{2g-2})}$.  Finally, we let $f(t)=f_0(t)f_1(t)$, and we observe that $f(\mu(\Sigma))$ annihilates all generalized eigenspaces $V_\lambda$ except possibly when $\lambda=2g-2$, since the same is true of its factor $f_1(\mu(\Sigma))$.  Thus we can write $f(\mu(\Sigma)) = \psi\circ \pi_{2g-2}$, where $\pi_{2g-2}: I_*(Y)_w \to V_{2g-2}$ is the projection operator and $\psi$ is some endomorphism of $V_{2g-2}$.
Since eigenspaces of $\mu(\Sigma)$ with different eigenvalues are orthogonal with respect to the pairing $\langle\cdot,\cdot\rangle$, it follows that
\begin{eqnarray*}
D_W^w(f(h)) & = & \langle \phi^w_X(1), f(\mu(\Sigma))\cdot \phi^w_Z(1) \rangle \\
& = & \langle \pi_{2g-2}(\phi^w_X(1)), \psi \circ \pi_{2g-2}(\phi^w_Z(1)) \rangle
\end{eqnarray*}
and so $\pi_{2g-2}(\phi^w_X(1))$ (and the pairing $\langle\cdot,\cdot\rangle$ on $V_{2g-2}$) must be nonzero if $D_W^w(f(h))\neq 0$.

Let $d_0 = -w^2 -\frac{3}{2}(b^+(W)+1)$.  Since $W$ has simple type, we can write
\[ 2D_W^w(e^{th}) = \sum_j a_{j,w} e^{t(K_j \cdot h)} + (-i)^{d_0} \sum_j a_{j,w}e^{t(iK_r\cdot h)} \]
as a special case of \cite[Equation 1.10]{KM-embedded} (with $\lambda=0$ and $Q(h)=0$, and writing $a_{j,w} = (-1)^{(w^2+K_j\cdot w)/2}\beta_j$ for convenience).  Comparing $t^k$--coefficients gives us
\[ 2D_W^w(h^k) = \sum_j a_{j,w} (K_j\cdot h)^k + (-i)^{d_0}\sum_j a_{j,w}(iK_j\cdot h)^k \]
and so it is easy to show that 
\[ 2D_W^w(f(h)) = \sum_j a_{j,w}f(K_j\cdot h) + (-i)^{d_0}\sum_j a_{j,w}f(iK_j\cdot h). \]
Now by construction we have $f(K_j\cdot h) = 0$ unless $K_j\cdot h = 2g-2$, and $f(iK_j\cdot h) = 0$ for all $j$; this is because $K_j\cdot h \equiv h^2\pmod{2}$ is even, and the factor $f_0$ of $f$ vanishes on all numbers of the form $i^r\cdot (2k)$ with $0\leq r\leq 3$ and $0 \leq k \leq g-1$ except for $2g-2$.  Therefore
\[ 2D^w_W(f(h)) = f(2g-2) \sum_{K_j\cdot \Sigma = 2g-2} a_{j,w}, \]
and the right hand side is nonzero by Theorem \ref{thm:lf-nonzero} since the sum is the coefficient of $e^{2g-2}$ in $\cald^w_W(h)$, so $\pi_{2g-2}(\phi^w_X(1)) \neq 0$ as desired.
\end{proof}

\begin{remark}
\label{rem:mu-spectrum}
The function $f(t)$ is often more complicated then necessary: in the case where $Y\cong S^1\times\Sigma$ and $w|_Y$ is Poincar{\'e} dual to the $S^1$ factor, Mu{\~n}oz \cite{Munoz-ring-structure} determined the structure of the closely related variant $I_*(S^1\times \Sigma)_{w,\Sigma}$, and in particular the spectrum of the operator $\mu(\Sigma)$ on it.  Kronheimer and Mrowka \cite[Section 7]{KM-excision} observed that as a consequence of these results, the spectrum of $\mu(\Sigma)$ on $I_*(S^1\times\Sigma)_w$ is in this case exactly the set $\{i^r(2k) \mid 0\leq r\leq 3, 0\leq k \leq g-1\}$, so it would have sufficed to take $f = f_1$, and that the generalized eigenspace $V_{2g-2}$ is $1$--dimensional.  Furthermore, since $\mu(\Sigma)$ acts with degree $-2$ on a $(\zz/8\zz)$--graded vector space, they observed that $V_{i\lambda} \cong V_\lambda$ for each eigenvalue $\lambda$ and so $\dim(V_{i^r(2g-2)}) = 1$ for each $r$ as well.  Our argument above does not make any use of this information.
\end{remark}

Finally, in the following theorem we can lift the restriction that $H_1(X) = 0$.  In this case, the Donaldson invariants can be defined on an algebra $\mathbb{A}(X)$ containing the symmetric algebra on $H_0(X) \oplus H_2(X)$.  Furthermore, given a homology class $h\in H_2(X;\zz)$, the Donaldson invariants $D^w_X(h^n)$ can be nonzero only if $n\equiv -w^2 - \frac{3}{2}(b^+(X) - b_1(X) + 1) \pmod{4}$.

\begin{theorem}
\label{thm:lf-nonzero-nonsimple}
Let $X\to S^2$ be a relatively minimal Lefschetz fibration with generic fiber $\Sigma$ of genus $g\geq 2$ such that $b^+(X) > 1$.  Let $\Delta\cong D^2\times\Sigma$ be a small neighborhood of a regular fiber, with boundary $Y\cong S^1\times\Sigma$, and let $w\in H^2(X;\zz)$ be a class for which $w|_Y = PD(S^1)$.  Then there is a nonzero $c$ such that $D^w_X(h^n) \sim c\cdot(2g-2)^n$ for all large $n \equiv d_0 \pmod{4}$, where $d_0 = -w^2 - \frac{3}{2}(b^+(X)-b_1(X)+1)$ and $h$ is the homology class of $\Sigma$.
\end{theorem}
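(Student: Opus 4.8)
The plan is to cut $X$ open along the fiber neighborhood $\Delta\cong D^{2}\times\Sigma$, apply the relative-invariant gluing formula exactly as in the proof of Proposition \ref{prop:lf-relative-invariant}, and then analyze the resulting element of $I_{*}(S^{1}\times\Sigma)_{w}$ using the spectrum of $\mu(\Sigma)$ recalled in Remark \ref{rem:mu-spectrum}. First I would write $X=X^{\circ}\cup_{Y}\Delta$, where $Y=S^{1}\times\Sigma$, the fiber complement $X^{\circ}=\overline{X\setminus\Delta}$ is a relatively minimal genus-$g$ Lefschetz fibration over $D^{2}$ with $\partial X^{\circ}=Y$, and $\Delta\to D^{2}$ is the trivial (hence vacuously relatively minimal) genus-$g$ Lefschetz fibration with $\partial\Delta=-Y$. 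The hypothesis $w|_{Y}=PD(S^{1})$ forces $w\cdot\Sigma=1$, so $I_{*}(Y)_{w}$ is defined, as are the relative invariants $\psi:=\phi^{w}_{X^{\circ}}(1)\in I_{*}(Y)_{w}$ and $\psi':=\phi^{w}_{\Delta}(1)\in I_{*}(-Y)_{w}$, and the gluing formula gives
\[ D^{w}_{X}(h^{n})=\bigl\langle\psi,\ \mu(\Sigma)^{n}\psi'\bigr\rangle, \]
where $\mu(\Sigma)$ acts on $I_{*}(-Y)_{w}$ and $\langle\cdot,\cdot\rangle\colon I_{*}(Y)_{w}\otimes I_{*}(-Y)_{w}\to\cc$ is the nondegenerate Poincar\'e pairing.

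Next I would decompose $I_{*}(Y)_{w}$ and $I_{*}(-Y)_{w}$ into generalized eigenspaces of $\mu(\Sigma)$. By Remark \ref{rem:mu-spectrum} the spectrum is $\{i^{r}(2k)\mid 0\le r\le 3,\ 0\le k\le g-1\}$ with each $V_{i^{r}(2g-2)}$ one-dimensional, so every other eigenvalue has modulus at most $2g-4$ and $\mu(\Sigma)^{n}$ acts on $V_{i^{r}(2g-2)}$ as multiplication by $(i^{r}(2g-2))^{n}$. Since eigenspaces with distinct eigenvalues are orthogonal under $\langle\cdot,\cdot\rangle$ and the remaining eigenspaces contribute only $O(n^{N}(2g-4)^{n})=o((2g-2)^{n})$ for some fixed $N$, I would obtain
\[ D^{w}_{X}(h^{n})=\sum_{r=0}^{3}\bigl(i^{r}(2g-2)\bigr)^{n}c_{r}+o\bigl((2g-2)^{n}\bigr),\qquad c_{r}:=\bigl\langle\pi_{i^{r}(2g-2)}(\psi),\ \pi_{i^{r}(2g-2)}(\psi')\bigr\rangle. \]
For $n\equiv d_{0}+j\pmod{4}$ the main term is $(2g-2)^{n}\sum_{r}i^{r(d_{0}+j)}c_{r}$, a quantity depending only on $j$ modulo $4$; since $D^{w}_{X}(h^{n})=0$ whenever $n\not\equiv d_{0}\pmod{4}$, this forces $\sum_{r}i^{r(d_{0}+j)}c_{r}=0$ for $j=1,2,3$. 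Writing $b_{r}=i^{rd_{0}}c_{r}$, the polynomial $\sum_{r}b_{r}x^{r}$ of degree at most $3$ then vanishes at the three distinct points $i,-1,-i$, so it is a scalar multiple of $(x+1)(x^{2}+1)$ and hence $b_{0}=b_{1}=b_{2}=b_{3}$. Consequently $D^{w}_{X}(h^{n})\sim c\,(2g-2)^{n}$ for $n\equiv d_{0}\pmod{4}$, with $c=\sum_{r}i^{rd_{0}}c_{r}=4c_{0}$.

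The crux is therefore to show that the single number $c_{0}=\langle\pi_{2g-2}(\psi),\pi_{2g-2}(\psi')\rangle$ is nonzero. Both $\psi$ and $\psi'$ are well-defined relative invariants precisely because $w\cdot\Sigma$ is odd, which also rules out reducibles on $X^{\circ}$ and on $D^{2}\times\Sigma$; Proposition \ref{prop:lf-relative-invariant} applied to $X^{\circ}$ gives $\pi_{2g-2}(\psi)\ne0$, and the same proposition applied to the \emph{trivial} fibration $\Delta\to D^{2}$ gives $\pi_{2g-2}(\psi')\ne0$ --- its proof goes through verbatim, since it proceeds by closing up via Lemma \ref{lem:embed-lf-disk} and invoking Theorem \ref{thm:lf-nonzero}, and uses nothing about the Lefschetz critical points. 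Finally, as $\langle\cdot,\cdot\rangle$ is nondegenerate and block-diagonal with respect to the two eigenspace decompositions, its restriction to the one-dimensional spaces $V_{2g-2}\subset I_{*}(Y)_{w}$ and $V_{2g-2}\subset I_{*}(-Y)_{w}$ is nondegenerate, whence $c_{0}\ne0$. I expect the main obstacle to be exactly this last point: confirming that the top eigenspace is one-dimensional on \emph{both} sides and that Proposition \ref{prop:lf-relative-invariant} really does apply to the degenerate Lefschetz fibration $D^{2}\times\Sigma$, so that neither factor in $c_{0}$ can be zero and the pairing between them is nonvanishing.
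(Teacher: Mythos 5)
Your proposal is correct and follows essentially the same route as the paper's proof: decompose $X$ along $Y=S^1\times\Sigma$, expand the Poincar\'e pairing over the generalized eigenspaces of $\mu(\Sigma)$, and reduce everything to the nonvanishing of $c_0=\langle \pi_{2g-2}(\phi^w_{X^\circ}(1)),\pi_{2g-2}(\phi^w_\Delta(1))\rangle$, which follows from Proposition \ref{prop:lf-relative-invariant} applied to both pieces (including the trivial fibration $\Delta\to D^2$) together with the one-dimensionality of $V_{2g-2}$ and the nondegeneracy of the pairing there. The only difference is cosmetic: you absorb the subdominant eigenvalues into an $o((2g-2)^n)$ error and use the a priori mod-$4$ vanishing to equate the quantities $i^{rd_0}c_r$, whereas the paper kills those eigenspaces exactly with a polynomial $p(\mu(\Sigma))$ and then recovers the asymptotics of $D^w_X(h^n)$ via a generating-function and partial-fraction argument.
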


\begin{proof}
Write $\xc = \overline{X\backslash\Delta}$, so that $X = \xc \cup_Y \Delta$.  We remark first that $w$ as in the statement of the theorem exists: certainly we can lift $w|_Y = PD(S^1)$ to a class $w|_\Delta \in H^2(\Delta;\zz)$, and then arguing as in Lemma \ref{lem:embed-lf-disk}, we see that $PD(S^1 \times \{*\}) \in H^2(Y;\zz)$ lifts to some $w|_\xc \in H^2(\xc;\zz)$ if and only if its image in $H^3(\xc,Y) \cong H_1(\xc)$ is zero, and this is true because $H_1(\xc)$ is generated by $H_1(\{*\} \times \Sigma)$.

For $0\leq r\leq 3$ we define
\[ c_r = \langle \pi_{i^r(2g-2)}(\phi^w_\xc(1)), \pi_{i^r(2g-2)}(\phi^w_\Delta(1)) \rangle. \]
Since both $\phi^w_\xc(1)$ and $\phi^w_\Delta(1)$ have nonzero $V_{2g-2}$--component and we have seen that the pairing $\langle\cdot,\cdot\rangle$ is nonzero on $V_{2g-2}$, which is $1$--dimensional (see Remark \ref{rem:mu-spectrum}), we conclude that $c_0 \neq 0$.

Let $p(t)$ denote the product $p_0\cdot \prod_{k=0}^{g-2} (t^4-(2k)^4)^{\dim(V_{2k})}$, where $p_0\in\qq$ is a constant chosen so that $p(2g-2)=1$.  Then we see that $p(\mu(\Sigma))$ acts on $I_*(Y)_w$ by annihilating the generalized eigenspaces $V_\lambda$ with $|\lambda| < 2g-2$, and as multiplication by $p(2g-2)=1$ on each $1$--dimensional eigenspace $V_{i^r(2g-2)}$.  It follows that
\begin{eqnarray*}
D^w_W(p(h)h^n) & = & \langle p(\mu(\Sigma))\mu(\Sigma)^n\cdot \phi^w_\xc(1), \phi^w_\Delta(1) \rangle \\
& = & \sum_{r=0}^3 \langle \pi_{i^r(2g-2)}(\mu(\Sigma)^n\cdot\phi^w_\xc(1)), \pi_{i^r(2g-2)}(\phi^w_\Delta(1)) \rangle \\
& = & (2g-2)^n \sum_{r=0}^3 i^{rn} c_r.
\end{eqnarray*}
It is clear from this that we have $D_W^w(p(h)h^{n+4}) = (2g-2)^4 D_W^w(p(h)h^n)$ for all integers $n$, and furthermore we can express the relations for $0\leq n \leq 3$ by the matrix equation
\[
\left(\begin{array}{l}
D^w_W(p(h)h^0)/(2g-2)^0 \\
D^w_W(p(h)h^1)/(2g-2)^1 \\
D^w_W(p(h)h^2)/(2g-2)^2 \\
D^w_W(p(h)h^3)/(2g-2)^3
\end{array}\right) =
\left(\begin{array}{llll} 1 & 1 & 1 & 1 \\ 1 & i & i^2 & i^3 \\ 1 & i^2 & i^4 & i^6 \\ 1 & i^3 & i^6 & i^9 \end{array}\right)
\left(\begin{array}{l} c_0 \\ c_1 \\ c_2 \\ c_3 \end{array}\right).
\]
The $4\times 4$ matrix is invertible, and since the $c_i$ are not all zero it follows that for some $n$ we must have $D_W^w(p(h)h^n) \neq 0$.  In fact, we must have $n\equiv d_0\pmod{4}$, since otherwise every monomial $h^e$ of $p(h)h^n$ has degree $e\equiv n \not\equiv d_0\pmod{4}$ and so $D_W^w(h^e)=0$.  Therefore
\begin{eqnarray*}
D^w_X(p(h)h^n) = C(2g-2)^n & \mbox{for all }n\equiv d_0\pmod*{4}
\end{eqnarray*}
where $C$ is a nonzero constant, and $D^w_X(p(h)h^n) = 0$ for all other $n$.

Consider the generating function $F(t) = \sum_{j=0}^\infty D_X^w(h^j)t^j$.  If we write $p(t) = \sum_{j=0}^d a_j t^j$, where $d=\deg(p)$, then for any $j\geq 0$ the $t^{d+j}$--coefficient of
\[ (a_d + a_{d-1}t^1 + \dots + a_0t^d)\cdot F(t) = t^d p\left(\frac{1}{t}\right) \cdot F(t) \]
is equal to $D_X^w(a_dh^{d+j} + a_{d-1}h^{d-1+j} + \dots + a_0h^j)$, or $D_X^w(p(h)h^j)$.  Thus there is a polynomial $q$ of degree less than $d+4$ such that
\[ (1-(2g-2)^4t^4)\cdot t^d p\left(\frac{1}{t}\right)\cdot F(t) = q(t), \]
and since $t^d p(1/t) = p_0\cdot\prod_{i=0}^{g-2} (1-(2k)^4t^4)^{\dim(V_{2k})}$ we can solve for $F$ and expand into partial fractions of the form
\[ F(t) = \frac{a_0 + a_1t + a_2t^2 + a_3t^3}{1-(2g-2)^4t^4} + \left(r(t) + \sum_{k=0}^{g-2} \frac{q_k(t)}{(1-(2k)^4t^4)^{\dim(V_{2k})}}\right) \]
where the $a_i$ are constants and $r, q_k$ are polynomials.  If $a_{d_0} = 0$ (interpreting the subscript modulo 4) then for all large $n\equiv d_0\pmod{4}$ we must have $|D_X^w(h^n)| \ll (2g-2)^n$, and so $D_X^w(p(h)h^n) = C(2g-2)^n$ cannot hold for nonzero $C$.  This is a contradiction, so $a_{d_0} \neq 0$ and $D_X^w(h^n) \sim a_{d_0}(2g-2)^n$ for all large $n\equiv d_0\pmod{4}$.
\end{proof}

\begin{remark}
Kronheimer and Mrowka \cite{KM-excision} also showed that for any $\Sigma$--bundle $Y \to S^1$ with fiber genus $g\geq 2$ and class $w\in H^2(Y;\zz)$ such that $w\cdot\Sigma$ is odd, the $(2g-2)$--eigenspace of $\mu(\Sigma)$ on $I_*(Y)_w$ is $1$--dimensional.  Thus we may repeat the proof of Theorem \ref{thm:lf-nonzero-nonsimple} verbatim to show that its conclusion still holds for any class $w\in H^2(X;\zz)$ with $w\cdot\Sigma$ odd.
\end{remark}

\begin{corollary}
\label{cor:nonvanishing-nonsimple}
Let $(X,\omega)$ be a symplectic manifold with $b_1=0$ and $b^+ > 1$.  If $h\in H_2(X;\zz)$ is Poincar{\'e} dual to the class of an integral symplectic form, then the Donaldson invariants $D_X^w(h^n)$ of $X$ are nonzero for large $n$ congruent to $d_0 = -w^2-\frac{3}{2}(b^+(X)+1) \pmod{4}$, and $K_X$ is a basic class.
\end{corollary}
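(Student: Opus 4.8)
The plan is to run the proof of Theorem~\ref{thm:nonvanishing} from Section~\ref{sec:proof-nonvanishing} with Theorem~\ref{thm:lf-nonzero-nonsimple} in place of Theorem~\ref{thm:lf-nonzero}, carrying the order of finite type $d(\cdot)$ along at every step. First I would dispose of the case $K_X\cdot[\omega]=0$: since $[\omega]$ is integral, an embedded symplectic surface Poincar\'e dual to $m[\omega]$ for large $m$ has genus at least $2$ and, by the adjunction formula, self-intersection $2g-2$, hence is tight, so $X$ has Donaldson simple type and the conclusion is contained in Theorem~\ref{thm:nonvanishing}. (The passage from nonvanishing of $\cald^w_X(h)$ to nonvanishing of $D^w_X(h^n)$ for all large $n\equiv d_0\pmod4$ is then an elementary consequence of the structure theorem, using that $D^w_X(xh^n)=0$ on this residue class and that $K_X$ is the unique basic class of maximal $|K\cdot[\omega]|$; this case in fact forces $K_X=0$ modulo torsion and $X$ to be K3-like, so that $d_0$ is even.)

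So assume $K_X\cdot[\omega]>0$. By \cite{Donaldson-pencils} there is, for each large $k$, a relatively minimal Lefschetz pencil on $X$ with generic fiber $\Sigma_k$ symplectic and Poincar\'e dual to $k[\omega]$; the adjunction formula gives $2g_k-2=k^2[\omega]^2+k(K_X\cdot[\omega])$, so $g_k\geq2$ once $k$ is large. Blowing up the $k^2[\omega]^2$ base points gives $\tilde X_k=X\#(k^2[\omega]^2)\cpbar$ carrying a relatively minimal genus-$g_k$ Lefschetz fibration over $S^2$ with exceptional $(-1)$--sections $E_1,\dots,E_N$ meeting every component of every fiber, and generic fiber $\tilde\Sigma_k=\Sigma_k-\sum_iE_i$ of square $0$. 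Since $\tilde\Sigma_k$ is primitive I would pick $\tilde w$ (for instance $\tilde w=PD(E_1)$) with $\tilde w\cdot\tilde\Sigma_k$ odd; Theorem~\ref{thm:lf-nonzero-nonsimple} then gives a nonzero $c$ with $D^{\tilde w}_{\tilde X_k}([\tilde\Sigma_k]^n)\sim c(2g_k-2)^n$, so $\tilde X_k$ has a basic class $\tilde K$ with $\tilde K\cdot\tilde\Sigma_k=2g_k-2$, and the finite-type adjunction inequality \cite{Munoz-higher} forces $d(\tilde K)=0$. Proposition~\ref{prop:lf-nonminimal-uniqueness} then shows $\tilde K=K_{\tilde X_k}$ modulo torsion and that this is the only such class; combining Lemma~\ref{lem:maximal-class-section} (so $K_{\tilde X_k}\cdot E_i=-1$) with Mu\~noz's blow-up description of basic classes, one blows down the $E_i$ one at a time — the pairing $-1$ with each exceptional class forces the ``$n=0$'' branch, so the order stays $0$ — to see that $K_{\tilde X_k}$ descends to a basic class $K'$ of $X$ with $d(K')=0$; comparing with $K_{\tilde X_k}=K_X+\sum_iPD(E_i)$ gives $K'=K_X$, so $K_X$ is a basic class. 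Running the same chain of reasoning on an arbitrary basic class $K$ of $X$ with $K\cdot[\omega]=\pm K_X\cdot[\omega]$ (which has $d(K)=0$ by adjunction with $\Sigma_k$, and whose transform $K+\sum_iPD(E_i)$ evaluates as $2g_k-2$ on $\tilde\Sigma_k$) yields $K=\pm K_X$ modulo torsion. Hence $K_X$ is the unique basic class maximizing $K\cdot[\omega]$, $-K_X$ is the unique one minimizing it, and every other basic class $K$ satisfies $|K\cdot h|<K_X\cdot[\omega]$.

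It remains to conclude that $D^w_X(h^n)\neq0$ for every $w$ and all large $n\equiv d_0\pmod4$. Here I would invoke the structure theorem for $4$-manifolds of finite type \cite{Munoz-nonsimple,Munoz-donaldson-nonsimple}: because $K_X$ has order $d(K_X)=0$ and strictly dominates all other basic classes in $|K\cdot h|$, its contribution to $D^w_X(h^n)$ is, up to sign, a fixed nonzero rational multiple of $(K_X\cdot[\omega])^n$ times the corresponding Taylor coefficient of $\exp(Q(th)/2)$, and this contribution outgrows all others along the residue class $n\equiv d_0\pmod4$; nontriviality of the coefficient is part of the definition of a basic class, and $D^w_X$ is not identically zero by Mu\~noz's blow-up formula applied to the nonvanishing result for $\tilde X_k$. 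The hard part will be exactly this final step: one must check that, on the one residue class mod~$4$ where $D^w_X(h^n)$ can be nonzero, the potentially competing leading contributions of $\pm K_X$ and of their $i$--rotates $\pm iK_X$ do not cancel — a purely formal consequence of the finite-type structure theorem together with the fact that $\cald^w_X$ is even or odd in $h$ according to the parity of $d_0$, but the only place where we use more than the adjunction inequality, Proposition~\ref{prop:lf-nonminimal-uniqueness}, and the geometry of Donaldson pencils. Everything else parallels the simple-type case.
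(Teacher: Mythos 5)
Your overall strategy is the same as the paper's: take a Donaldson pencil dual to $k[\omega]$, blow up the base points to get a relatively minimal fibration whose fiber components all meet $(-1)$--sections, apply Theorem \ref{thm:lf-nonzero-nonsimple} to produce a basic class pairing maximally with $\tilde\Sigma$, identify it with $K_{\tilde X}$ via Proposition \ref{prop:lf-nonminimal-uniqueness}, and blow down using Mu\~noz's description of basic classes of a blow-up. Your extra case analysis for $K_X\cdot[\omega]=0$ (which matches the remark at the end of Section \ref{sec:proof-nonvanishing}) and your explicit uniqueness argument for classes with $|K\cdot[\omega]|=K_X\cdot[\omega]$ are both sound and, if anything, more careful than the paper, which simply cites the blow-up formula at this point. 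The one place you should tighten is the formula in your final step: the contribution of $K_X$ to $D^w_X(h^n)$ is \emph{not} ``$(K_X\cdot[\omega])^n$ times the $t^n$--coefficient of $\exp(Q(th)/2)$'' --- that product vanishes for odd $n$ and would break your argument whenever $d_0$ is odd. The correct expression is the convolution $n!\,[t^n]\bigl(e^{Q(h)t^2/2}e^{(K_X\cdot h)t}\bigr)=\sum_{2a+b=n}\tfrac{n!}{a!\,b!}(Q(h)/2)^a(K_X\cdot h)^b$, which is a sum of positive terms since $Q(h)=[\omega]^2>0$ and $K_X\cdot h>0$, and a saddle-point comparison (the ratio of such coefficients for exponents $\lambda>\lambda'$ grows like $e^{(\lambda-\lambda')\sqrt{2n/Q(h)}}$) shows it dominates every other basic class's contribution. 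Your worry about the $i$--rotated series is also resolved favorably and formally: writing $e^{-Qt^2/2+i\lambda t}=f(it)$ with $f(s)=e^{Qs^2/2+\lambda s}$ shows the second series contributes exactly $i^{\,n-d_0}$ times the first on each coefficient, so the two series reinforce (factor $2$) on the residue class $n\equiv d_0\pmod 4$ and cancel on $n\equiv d_0+2$, consistent with the degree constraint; there is no cancellation to fear.
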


\begin{proof}
We mostly repeat the proof in the case where $X$ has simple type:  Take a Lefschetz pencil $X\to S^2$ with fibers $\Sigma$ in the class $kh$ for $k$ large and having genus $g\geq 2$.  Let $\tilde{X} \to S^2$ be the Lefschetz fibration obtained by blowing up $X$ at the $(kh)^2$ base points of the pencil and having generic fiber $\tilde{\Sigma}$.  Then for an appropriate choice of $\tilde{w}$ we have $D^{\tilde{w}}_{\tilde{X}}(\tilde{\Sigma}^n) \sim c\cdot (2g-2)^n$ for nonzero $c$ and all $n \equiv d_0\pmod{4}$.  Hence by \cite[Theorem 6]{Munoz-nonsimple} there is at least one basic class $\tilde{K}$ on $\tilde{X}$ for which $\tilde{K}\cdot \tilde{\Sigma} = 2g-2$.  Since at least one $(-1)$--section intersects every component of every fiber of $\tilde{X} \to S^2$, Proposition \ref{prop:lf-nonminimal-uniqueness} says that $\tilde{K} = K_{\tilde{X}}$, and since $\tilde{K}$ is unique the claim that $D^{\tilde{w}}_{\tilde{X}}(\tilde{\Sigma}^n) \sim c_{\tilde{w}}\cdot(2g-2)^n$ for some nonzero $c_{\tilde{w}}$ actually holds for all $\tilde{w}$, regardless of the parity of $\tilde{w}\cdot\tilde{\Sigma}$ or its restriction to any embedded $S^1\times\tilde{\Sigma}$ (see \cite[Theorem 1]{Munoz-donaldson-nonsimple}).

Finally, since $K_{\tilde{X}} = K_X + \sum E_i$ where the $E_i$ are the exceptional divisors, we know from the blow-up formula \cite{FS-blowup} and the description of the basic classes of a blow-up \cite[Remark 3]{Munoz-donaldson-nonsimple} that $D^w_X$ is nonzero for any $w$ and that $K_X$ must be a basic class of $X$.
\end{proof}

\bibliographystyle{hplain}
\bibliography{references}

\begin{thebibliography}{10}

\bibitem{BD-gluing}
P.~J. Braam and S.~K. Donaldson.
\newblock Fukaya-{F}loer homology and gluing formulae for polynomial
  invariants.
\newblock In {\em The {F}loer memorial volume}, volume 133 of {\em Progr.
  Math.}, pages 257--281. Birkh\"auser, Basel, 1995.

\bibitem{Donaldson-polynomial}
S.~K. Donaldson.
\newblock Polynomial invariants for smooth four-manifolds.
\newblock {\em Topology}, 29(3):257--315, 1990.

\bibitem{Donaldson-submanifolds}
S.~K. Donaldson.
\newblock Symplectic submanifolds and almost-complex geometry.
\newblock {\em J. Differential Geom.}, 44(4):666--705, 1996.

\bibitem{Donaldson-pencils}
S.~K. Donaldson.
\newblock Lefschetz pencils on symplectic manifolds.
\newblock {\em J. Differential Geom.}, 53(2):205--236, 1999.

\bibitem{Donaldson-book}
S.~K. Donaldson.
\newblock {\em Floer homology groups in {Y}ang-{M}ills theory}, volume 147 of
  {\em Cambridge Tracts in Mathematics}.
\newblock Cambridge University Press, Cambridge, 2002.
\newblock With the assistance of M. Furuta and D. Kotschick.

\bibitem{DK-book}
S.~K. Donaldson and P.~B. Kronheimer.
\newblock {\em The geometry of four-manifolds}.
\newblock Oxford Mathematical Monographs. The Clarendon Press Oxford University
  Press, New York, 1990.
\newblock Oxford Science Publications.

\bibitem{FL}
Paul M.~N. Feehan and Thomas~G. Leness.
\newblock A general {$SO(3)$}-monopole cobordism formula relating {D}onaldson
  and {S}eiberg-{W}itten invariants.
\newblock 2002, arXiv:math.DG/0203047.

\bibitem{Finashin}
Sergey Finashin.
\newblock Relative {S}eiberg-{W}itten and {O}zsvath-{S}zabo 4-dimensional
  invariants with respect to embedded surfaces.
\newblock 2004, arXiv:math.GT/0401345.

\bibitem{FS-immersed}
Ronald Fintushel and Ronald~J. Stern.
\newblock Immersed spheres in {$4$}-manifolds and the immersed {T}hom
  conjecture.
\newblock {\em Turkish J. Math.}, 19(2):145--157, 1995.

\bibitem{FS-blowup}
Ronald Fintushel and Ronald~J. Stern.
\newblock The blowup formula for {D}onaldson invariants.
\newblock {\em Ann. of Math. (2)}, 143(3):529--546, 1996.

\bibitem{FS-knots}
Ronald Fintushel and Ronald~J. Stern.
\newblock Knots, links, and {$4$}-manifolds.
\newblock {\em Invent. Math.}, 134(2):363--400, 1998.

\bibitem{FS-nonsymplectic}
Ronald Fintushel and Ronald~J. Stern.
\newblock Nonsymplectic 4-manifolds with one basic class.
\newblock {\em Pacific J. Math.}, 194(2):325--333, 2000.

\bibitem{Fukaya}
Kenji Fukaya.
\newblock Floer homology for oriented {$3$}-manifolds.
\newblock In {\em Aspects of low-dimensional manifolds}, volume~20 of {\em Adv.
  Stud. Pure Math.}, pages 1--92. Kinokuniya, Tokyo, 1992.

\bibitem{Gompf}
Robert~E. Gompf.
\newblock A new construction of symplectic manifolds.
\newblock {\em Ann. of Math. (2)}, 142(3):527--595, 1995.

\bibitem{Gompf-Stipsicz}
Robert~E. Gompf and Andr{\'a}s~I. Stipsicz.
\newblock {\em {$4$}-manifolds and {K}irby calculus}, volume~20 of {\em
  Graduate Studies in Mathematics}.
\newblock American Mathematical Society, Providence, RI, 1999.

\bibitem{KM-embedded}
P.~B. Kronheimer and T.~S. Mrowka.
\newblock Embedded surfaces and the structure of {D}onaldson's polynomial
  invariants.
\newblock {\em J. Differential Geom.}, 41(3):573--734, 1995.

\bibitem{KM-witten}
P.~B. Kronheimer and T.~S. Mrowka.
\newblock Witten's conjecture and property {P}.
\newblock {\em Geom. Topol.}, 8:295--310 (electronic), 2004.

\bibitem{KM-slides}
Peter Kronheimer.
\newblock The eigenspace decomposition in instanton homology, 2009
  (unpublished).
\newblock \url{http://www.math.uga.edu/~topology/2009/talks/mini-talk.pdf}.

\bibitem{KM-excision}
Peter Kronheimer and Tomasz Mrowka.
\newblock Knots, sutures, and excision.
\newblock {\em J. Differential Geom.}, 84(2):301--364, 2010.

\bibitem{Munoz-gluing}
Vicente Mu{\~n}oz.
\newblock Gluing formulae for {D}onaldson invariants for connected sums along
  surfaces.
\newblock {\em Asian J. Math.}, 1(4):785--800, 1997.

\bibitem{Munoz-fukaya-floer}
Vicente Mu{\~n}oz.
\newblock Fukaya-{F}loer homology of {$\Sigma\times {\bf S}^1$} and
  applications.
\newblock {\em J. Differential Geom.}, 53(2):279--326, 1999.

\bibitem{Munoz-ring-structure}
Vicente Mu{\~n}oz.
\newblock Ring structure of the {F}loer cohomology of {$\Sigma\times{\bf
  S}^1$}.
\newblock {\em Topology}, 38(3):517--528, 1999.

\bibitem{Munoz-nonsimple}
Vicente Mu{\~n}oz.
\newblock Basic classes for four-manifolds not of simple type.
\newblock {\em Comm. Anal. Geom.}, 8(3):653--670, 2000.

\bibitem{Munoz-higher}
Vicente Mu{\~n}oz.
\newblock Higher type adjunction inequalities for {D}onaldson invariants.
\newblock {\em Trans. Amer. Math. Soc.}, 353(7):2635--2654, 2001.

\bibitem{Munoz-donaldson-nonsimple}
Vicente Mu{\~n}oz.
\newblock Donaldson invariants of non-simple type 4-manifolds.
\newblock {\em Topology}, 41(4):745--765, 2002.

\bibitem{Ozbagci}
Burak Ozbagci.
\newblock Signatures of {L}efschetz fibrations.
\newblock {\em Pacific J. Math.}, 202(1):99--118, 2002.

\bibitem{OS-symplectic}
Peter Ozsv{\'a}th and Zolt{\'a}n Szab{\'o}.
\newblock Holomorphic triangle invariants and the topology of symplectic
  four-manifolds.
\newblock {\em Duke Math. J.}, 121(1):1--34, 2004.

\bibitem{Seidel-book}
Paul Seidel.
\newblock {\em Fukaya categories and {P}icard-{L}efschetz theory}.
\newblock Zurich Lectures in Advanced Mathematics. European Mathematical
  Society (EMS), Z\"urich, 2008.

\bibitem{Stipsicz}
Andr{\'a}s~I. Stipsicz.
\newblock Singular fibers in {L}efschetz fibrations on manifolds with
  {$b_2^+=1$}.
\newblock {\em Topology Appl.}, 117(1):9--21, 2002.

\bibitem{Taubes-SWtoGR}
Clifford~H. Taubes.
\newblock {${\rm SW}\Rightarrow{\rm Gr}$}: from the {S}eiberg-{W}itten
  equations to pseudo-holomorphic curves.
\newblock {\em J. Amer. Math. Soc.}, 9(3):845--918, 1996.

\bibitem{Taubes-symplectic}
Clifford~Henry Taubes.
\newblock The {S}eiberg-{W}itten invariants and symplectic forms.
\newblock {\em Math. Res. Lett.}, 1(6):809--822, 1994.

\bibitem{Taubes-more}
Clifford~Henry Taubes.
\newblock More constraints on symplectic forms from {S}eiberg-{W}itten
  invariants.
\newblock {\em Math. Res. Lett.}, 2(1):9--13, 1995.

\bibitem{Witten}
Edward Witten.
\newblock Monopoles and four-manifolds.
\newblock {\em Math. Res. Lett.}, 1(6):769--796, 1994.

\end{thebibliography}

\end{document}